\documentclass[a4paper]{article}

\usepackage{fullpage} 
\usepackage{parskip} 
\usepackage{tikz} 
\usepackage{amsmath}
\usepackage{hyperref}
\usepackage{xcolor}
\usepackage{soul}
\usepackage{amsmath,amssymb}
\DeclareMathOperator*{\argmin}{arg\,min}

\usepackage{amsthm}
\usepackage{algorithm}
\usepackage{subcaption}
\usepackage{algpseudocode}
\newtheorem{theorem}{Theorem}[section]
\newtheorem{definition}[theorem]{Definition}
\newtheorem{lemma}[theorem]{Lemma}
\newtheorem{proposition}[theorem]{Proposition}

\newtheorem{assumption}{Assumption} 
\usepackage{multirow}
\usepackage{comment}

\title{Pseudoconvex Problems in Operational Decision Systems: Algorithms for Joint Learning and Optimization}
\author{Zijun Li and Aswin Kannan\footnote{
The first author is reachable at zijun.li@hu-berlin.de. The first author is a PhD candidate at Humboldt Universitaet zu Berlin, Germany under the guidance of the second author. The second author is the corresponding author and is reachable at both aswin.kannan@hu-berlin.de and aswin.kannan@iiitb.ac.in. The ORCID-ID for the corresponding author is 0000-0002-4698-7678. \\ 
The second author was a faculty member and Junior Research Group Leader at Humboldt Universitaet zu Berlin till December 2024. He is currently a faculty member at the International Institute of Information Technology, Bangalore, India (from January 2025). He still retains an affiliation to his Junior Group in Berlin.}}

\begin{document}
\maketitle
\abstract{
We consider joint optimization and learning problems arising in real-time decision systems.
While most existing work focuses primarily on convex, revenue-based objectives, we extend this line of research to multi-objective formulations. In energy systems, for instance, we incorporate metrics such as renewable penetration and generation costs. Our key focus, however, is on a class of problems with a pseudoconvex structure—a natural relaxation of convexity. Representative examples include fractional objectives in energy management and logit-based revenue models in retail. The outer-level problem optimizes these pseudoconvex objectives, while the inner-level problem involves training a machine learning model using historical data. Our contributions are twofold. First, we propose a simultaneous learning-and-optimization framework that iteratively updates both inner- and outer-level variables. Second, we develop convergent algorithms for these problem classes under realistic mathematical assumptions. Using real-world datasets, we evaluate the computational performance of our methods and highlight an important observation: there exist clear trade-offs between inexact learning and computational time when assessing final solution quality.
}
\paragraph{Keywords:} Pseudoconvex, joint optimization and learning, multi-objective, and renewable penetration.

\section{Introduction}
Learning and optimization are often deeply interconnected, with learning traditionally performed first and optimization carried out thereafter. Consider retail pricing: revenue is given by the product of price and sales, but sales responses to price rarely admit closed-form expressions, necessitating the use of learned models fitted to historical data. A similar paradigm arises in energy management, where fuel-cost minimization and revenue maximization rely on learned relationships derived from historical commodity prices across multiple energy sources. When multiple objectives—such as cost, emissions, and risk—must be simultaneously balanced, the resulting problem can be expressed as
\begin{align}
\min_{x,\theta} \hspace{1mm} F(x,\theta) &= \left(f_{1}(x, \theta^{*}),..., f_{m}(x, \theta^{*}) \right), \quad
\text{where} \quad \theta^* = \argmin_{\theta \in \Theta} \hspace{1mm} h(\theta, y^{\text{fea}}, y^{\text{true}}).
\label{eq:primform}
\end{align}
Here, $y^{\text{fea}}$ and $y^{\text{true}}$ denote features and labels, respectively, while $\theta$ represents model parameters such as coefficients in regression models or neural-network weights. The decision variables $x$ encode operational or planning actions—fuel dispatch levels in energy systems or product prices in retail—while $F$ captures the optimization objectives, and $h$ is the learning objective. Examples include emissions or cost in $F$, and accuracy, sparsity, or computational bias in $h$. 

The conventional approach solves the learning problem in full before addressing the outer optimization. While intuitive, this separation can be computationally expensive and is often unnecessary when a high-quality solution, but suboptimal solution suffices. Motivated by this, recent research has examined simultaneous learning and optimization, where learning and decision-making interleave and inform each other throughout the process~\cite{kannan2015distributed,ahmadi25learning,AhmadiShanbhag2020,aybat22learning,Elmachtoub2022}. However, these works largely remain within convex regimes, without relaxing structural assumptions at either level. More recent frontier works explore certain weaknesses~\cite {Mandi_2024,kotary2024learningjointmodelsprediction}, though these advances primarily emphasize computational aspects. The survey by Mandi et al.~\cite{Mandi_2024} provides a detailed account of this emerging landscape. Some works~\cite{Schulman2021,BanRudin2019,BertsimasKallus2020} have considered solving learning problems to an approximate level and then performing the outer optimization completely with this error. These approaches are application-specific, and most of them also fall under the umbrella of convex optimization and single-objective formulations. 


In contrast, our work introduces a framework that accommodates multiple objectives and extends classical assumptions to pseudoconvex outer-level problems, which naturally arise in energy systems and retail management. These settings have received limited attention in existing literature, leaving important gaps. We highlight these gaps and articulate our contributions in the following sections. We also note that the proposed framework is flexible and can extend to broader applications and generalizations—an agenda we view as promising for future research.

\subsection{Gaps in Literature}
Before delving into the main applications, we first provide a formal definition of pseudoconvexity. Although pseudoconvexity represents a relaxation of convexity, it retains several computationally tractable properties. While we discuss these properties in more detail in the following sections, we present its formal definition below.

\begin{definition}
Consider a differentiable function $f_i:\Theta \subset \mathbb{R}^{n} \rightarrow \mathbb{R}$. Let $X$ be a convex set. Then, $f_i$ is pseudoconvex if the following holds:
\begin{align*}
\forall x_a, x_b \in X:\qquad \nabla f(x_a)^T (x_b - x_a) \geq 0 \;\Longrightarrow\; f(x_b) \geq f(x_a). 
\end{align*}
\end{definition}

In addition to our primary applications, pseudoconvex problems arise frequently in ratio optimization---for instance, in lift-to-drag optimization in aircraft design and various other engineering contexts. They are also relevant in modeling competitive economic equilibria, particularly in pricing and strategic decision-making settings.

\paragraph{Retail Management:}
From the retailer's perspective, the central objective is to set prices that maximize revenue. In the retail literature, much of the prior work has focused on revenue maximization subject to constraints such as promotions, markdowns, inventory levels, and inter-item dependencies. Demand can take several forms, including logit~\cite{mcfadden84}, probit~\cite{probit2001}, and multiplicative competitive interaction models~\cite{mci-15,retail-best-14}, with a comprehensive survey provided in~\cite{demand-survey13}. Many existing studies rely on closed-form demand expressions, whereas in practice these expressions are typically unknown and must be inferred from data. Large online retailers such as Amazon, for example, continuously update demand estimates through online learning.

In our work, we approximate demand using the logit model, which renders the inner problem a regression task rather than requiring complex models such as decision trees or neural networks. The logit model is well established in the literature, and readers may refer to~\cite{mcfadden84, logit23, Kreimeier2023} for further details. For all subsequent discussions, including the OPF problem, the structural formulation follows~(\ref{eq:primform}). In the retail setting, examples of objective functions include revenue, consumer satisfaction, profit, and segment-wise demand. The inner problem corresponds to estimating the coefficients of the logit or other demand model.

Although tree-based models and neural networks can also be used to approximate demand, such approaches are less common in the retail literature~\cite{demand-ml-22}. We consider such models primarily in the context of energy systems, where they are more standard. While the generic multiobjective formulation applies to retail, we restrict ourselves to single-objective formulations in this paper, as the theoretical results we develop pertain specifically to single-objective settings. A natural extension involves portfolio management, where the decision variable represents the fraction of a portfolio invested in an asset. Such formulations also give rise to fractional objectives and pseudoconvex problems~\cite{subramanian2010fractional, kannan2019optimal}. Although we do not pursue this direction here, our framework extends to this setting without loss of generality.

\paragraph{Energy Systems:}
In the context of energy systems, two primary challenges frequently arise: the first concerns DC Optimal Power Flow (OPF) for direct-current-based systems, and the second involves renewable energy prediction. Both problems lead to formulations similar to~(\ref{eq:primform}). Renewable energy prediction itself is a natural extension of OPF and unit commitment problems. The primary objective of OPF is to generate power at minimal cost while satisfying constraints related to environmental impact, ramping capabilities, and network supply--demand balance. Extensive research has explored both single- and multiobjective versions of this problem. However, these studies typically assume that cost coefficients and generation levels are either expressed in closed form or are known in advance. In practice, this assumption becomes increasingly unrealistic as renewable penetration grows. These parameters are often estimated from historical data, requiring the solution of a machine learning problem. At a high level, this process follows the structure described in~(\ref{eq:primform}). As illustrative examples, in this formulation $f_1$ may represent oil generation cost, $f_2$ renewable energy penetration, $f_3$ carbon emissions, and so forth. The function $h(\cdot)$ denotes the loss function used in the ML model (e.g., MSE), $x$ represents the oil generation level, $\theta$ the model hyperparameters to be optimized, and $y^{\text{fea}}$ and $y^{\text{true}}$ the features and labels obtained from historical data.

While much of the existing literature on pseudoconvex formulations focuses on solving the inner problem $h(\cdot)$ to full optimality~\cite{suneja2011optimality, kleinert2021survey, ji2021bilevel}, our work adopts a more integrated perspective in which learning and optimization are coupled and the inner problem may be solved inexactly. This reflects practical large-scale settings where exact training at every iteration is often computationally unnecessary. A natural extension arises when the learning problem is solved exactly, leading to joint hyperparameter and model-parameter optimization (HPO--MPO) formulations, which also fit the general framework considered here. In practice, however, these components are typically treated separately, with an emphasis on predictive accuracy alone. Jointly updating hyperparameters and model parameters in a coupled scheme remains a promising direction for future research.



\subsection{Contributions and Outline}
This study addresses the joint learning and optimization problem, which has primarily been examined within convex settings in prior research. The central contribution of our work lies in extending this analysis to cases where the outer optimization problems are pseudoconvex, an important step for applications such as decarbonization in energy systems and revenue maximization in retail portfolio management. Specifically, we propose a novel framework that alternates between optimization and learning updates, where the learning tasks need not be solved to full precision, thereby offering a more practical and computationally efficient solution. In addition to providing theoretical guarantees, we conduct extensive tests on real-world datasets, demonstrating the applicability of our proposed schemes in practical settings. We note that our theoretical results apply only to single-objective optimization. In the multiobjective setting, we adopt weighted scalarizations, which are known to be effective primarily when the underlying Pareto front is convex and may fail to recover solutions on nonconvex fronts.
The study also explores a broader range of problems in machine learning and renewable energy, underscoring the versatility of our approach. Furthermore, we examine the impact of prematurely terminating the learning process, elucidating how such an approach can still yield reasonable solutions to the outer optimization problems, thereby balancing computational cost with solution quality.

The paper is organized as follows. Section~\ref{sec:form} expands on the problem formulation, detailing the relevant approximations in the context of our study. Section~\ref{sec:algs} presents our alternating algorithm along with its convergence theory. Section~\ref{sec:numerics} provides a comprehensive numerical analysis, testing the algorithm rigorously on real-world applications. Section~\ref{sec:conclusion} concludes the study by summarizing the key findings and outlining potential avenues for future work.
\section{Problem Setup and Approximations}
\label{sec:form}
A central theme of this paper is the shared structural foundation underlying the two applications we study. Both can be expressed within a unified, two-level data-centric framework. At the inner level, we train machine learning models on historical data to obtain accurate and well-calibrated predictions. These predictive insights feed into the outer level, where the goal is to optimize high-level objectives—such as revenue or system performance—based on the learned models. A salient feature of our applications is that the resulting outer-level objectives often exhibit pseudoconvexity, which enables efficient optimization. In many instances, these objectives arise as fractional (ratio-based) functions, further shaping the mathematical structure of the problem. In this work, we undertake a systematic study of these problem classes, analyzing their formulations and implications within a joint predictive–prescriptive paradigm, where prediction is handled by machine learning and prescription by optimization. This interplay forms a foundational component of decision-making across industries, public-sector operations, and societal systems—though it often receives less explicit attention. We highlight this perspective through two representative applications in which this coupling is both natural and essential. We begin by summarizing fundamental principles of multiobjective optimization, which serve as a basis for the subsequent problem formulation. 

\subsection{Multi-objective optimization} \label{sec:moo_intro}
A multi-objective optimization problem (MOP) seeks to simultaneously optimize multiple, often conflicting, objective functions. Formally, it can be expressed as
\begin{align*} 
\min_{x \in \Omega} \quad F(x) = \left[ f_{1}(x), \hdots, f_{m}(x) \right],
\end{align*}
where $x \in \mathbb{R}^n$ is the decision variable, $\Omega \subset \mathbb{R}^n$ is the feasible set, and $F(x):\Omega \to \mathbb{R}^m$ denotes the objective vector containing $m \ge 2$ objective functions.

Since the objectives $\{f_i\}_{i=1}^m$ typically conflict with one another, improving one objective often requires sacrificing performance in another. Consequently, a single solution that simultaneously minimizes all objectives rarely exists. Instead, optimality is defined through the Pareto dominance.

\begin{definition}[Pareto dominance] \label{def:MOO_dominate}
A point $x_a \in \Omega$ is said to \textbf{dominate} another point $x_b \in \Omega$ (denoted $x_a \succcurlyeq x_b$) if and only if:
\begin{enumerate}
    \item $f_i(x_a) \le f_i(x_b)$ for all $i \in \{1, \ldots, m\}$; and
    \item $f_j(x_a) < f_j(x_b)$ for at least one $j \in \{1, \ldots, m\}$.
\end{enumerate}
In other words, $x_a$ is no worse than $x_b$ in every objective and strictly better in at least one.
\end{definition}

This notion leads to the following definitions.

\begin{definition}[Pareto optimality and Pareto front] \label{def:MOO_pareto_optimal}
A feasible point $x^* \in \Omega$ is \textbf{Pareto optimal} if no other feasible point dominates it. 
The \textbf{Pareto optimal set} consists of all such non-dominated points in $\Omega$. 
The \textbf{Pareto front} is the image of the Pareto optimal set under the mapping $F$, i.e., the set of corresponding objective vectors.
\end{definition}

The Pareto front captures the optimal trade-off set of solutions. The goal of a MOP algorithm is not to find a single optimal solution, but to generate a set of solutions that closely approximate this front. Ideally, these solutions should approximate the front as closely and uniformly as possible.

\subsubsection{Algorithms and metric} 

Solving a MOP involves generating or approximating Pareto optimal solutions. Broadly, methods fall into two categories: scalarization-based approaches and population-based metaheuristics.

\paragraph{Scalarization methods:} These methods convert the MOP into a single-objective problem that can be solved using standard optimization techniques.

\begin{itemize}
\item \textbf{Weighted sum method}~\cite{ehrgott2002multiobjective,kim2006adaptive}: Objectives are combined into a single scalar function:
\begin{align*}
    \min \hspace{1mm} F_{ws} (x) = \sum_{i=1}^m w_i f_i(x),
    \end{align*}
where $w_i \ge 0$ and $\sum_{i=1}^m w_i = 1$. Although simple and widely used, the weighted-sum method generates Pareto-optimal solutions only when the underlying Pareto front is convex; it fails to recover solutions in non-convex portions of the front~\cite{steuer1983interactive,dachert2012augmented}.
\item \textbf{$\epsilon$-constraint method}~\cite{mavrotas2009effective,deb2016multi}: In this method, one objective is minimized while the remaining objectives are enforced as constraints:
\begin{align*}
&\min \hspace{1mm} f_i(x) \\
\text{subject to} \quad & f_j(x) \le \epsilon_j, \quad \forall j \neq i.
\end{align*}
By varying the parameters $\epsilon_j$, one can generate different Pareto-optimal solutions, including those lying on nonconvex portions of the Pareto front. Although more computationally demanding, this approach is generally more robust than weighted scalarization methods for problems with nonconvex trade-off surfaces. 
\end{itemize}

\paragraph{Population-based metaheuristic methods:} Evolutionary algorithms (EAs) maintain a population of solutions and iteratively apply selection, crossover, and mutation to evolve this population toward the Pareto front. They can approximate the entire front in a single run. Prominent examples such as NSGA-II~\cite{deb2002fast, deb2011multi} employ non-dominated sorting and diversity-preserving mechanisms such as crowding distance.

\paragraph{Hypervolume:} The hypervolume (HV) indicator~\cite{zitzler1998multiobjective, zitzler2003performance} is a widely used performance metric. It measures the Lebesgue volume of the objective space dominated by a set of non-dominated solutions with respect to a reference point. Larger hypervolume values correspond to solution sets that are closer to the ideal vector and/or more diverse across the Pareto front. Importantly, HV is Pareto compliant and provides a strict ordering over solution sets. We remark that our focus in this work is on the weighted formulation. Our aim is not to undertake a comparative study of multiobjective optimization methods, but rather to adopt a theoretically sound and practically efficient approach for formulating and solving the joint predictive--prescriptive problem.
\subsection{Energy} \label{sec:intro_energy_problems}
Typical problems in energy management naturally adopt a multi-level structure. For clarity and completeness, we restate the formulation introduced earlier:
\begin{align*} 
\min_{x,\theta} \hspace{2mm} F(x,\theta) &= \left(f_{1}(x, \theta^{*}),..., f_{m}(x, \theta^{*}) \right), \quad
\text{where} \quad 
\theta^* = \argmin_{\theta \in \Theta} \hspace{1mm} h(\theta, y^{\text{fea}}, y^{\text{true}}).
\end{align*}
The minimization over $\theta$ can be further decomposed into two components: a hyperparameter optimization (HPO) stage concerned with selecting an appropriate learning architecture, and a model-parameter optimization (MPO) stage corresponding to training the model. Consequently, practical machine learning pipelines in energy systems often involve more than two coupled optimization levels. For discussions of frameworks that integrate both HPO and MPO, we refer the reader to~\cite{aswin2021, li2024algorithm}. In the present work, however, we restrict attention to the MPO component and do not explicitly address hyperparameter optimization in the energy setting.  
At the outer level, the decision variable $x$ represents quantities such as generation levels, unit commitment decisions, or sales, while the inner learning problem provides estimates of system parameters---including cost coefficients and capacity values---inferred from historical data and encoded in~$\theta$.

The Optimal Power Flow (OPF) problem has been studied extensively from multiple perspectives. A substantial portion of the literature focuses on the continuous variant, where start-up, shut-down, and minimum up/down-time constraints are ignored~\cite{kannanoms13,4form12,4form13,metzler03nash-cournot,kamat04twosettlement}. Across such formulations, the central operational requirement is the maintenance of supply--demand balance, while the typical objectives include minimizing generation costs, increasing renewable penetration, and addressing the potential for shortfall~\cite{kannanoms13}. Shortfall considerations usually introduce penalties when the generated power falls short of what is committed or when demand is not fully met. The constraints in these models often involve capacity limits, ramp-type inequalities, and the fundamental supply--demand balancing condition. Another line of work examines unit-commitment-based OPF, which captures generator switching behavior~\cite{updownoren10,birge04,ramp08,shahid95cons}. In thermal systems such as coal-based plants, generators cannot be turned off immediately after being switched on, and this temporal coupling requires the inclusion of additional integer variables. Given the resulting increase in complexity, the mixed-integer unit commitment formulation is not considered here and is instead reserved for future work. In this work, we restrict attention to the continuous OPF variant, which can be expressed as:
\begin{align*}
&\hspace{10mm} \min_{x} \hspace{1mm}  f_1(x, \theta^*) = \sum_{i \in \mathcal{N}_1} \left(a_{1} x_i + a_2 (x_i)^2 \right) \\
\text{subject to:} \\
& \sum_{i\in \mathcal{N}_2} x_i^t + \sum_{i\in \mathcal{N}_1} x_i^t(\theta^*) \geq d^t, \quad \forall t \in T, 
&& \text{supply--demand balance},\\[2mm]
& 0 \leq x_i^t \leq cap_i^t, \quad \forall i \in \mathcal{N}_2, \hspace{1mm} \forall t \in T,
&& \text{capacity constraints},\\[2mm]
& |x_i^t - x_i^{t-1}| \leq \delta x_{i}^{t-1},\quad \forall i \in \mathcal{N}_2, \hspace{1mm} \forall t \in T,
&& \text{ramp constraints},
\end{align*}
where $a_{1}$ and $a_{2}$ are positive coefficients, $\mathcal{N}_{1}$ denotes the set of renewable generators, $\mathcal{N}_{2}$ denotes the set of non-renewable generators, and $x_i(\theta)$ with $i\in \mathcal{N}_1$ represents the predicted renewable generation, parametrized by~$\theta$.
We consider a single agent that owns generation facilities and is allowed to sell power at all nodes, with generation at node $i$ denoted by $x_i$. For simplicity, we restrict our attention to a single objective of cost minimization. Although transmission constraints are not incorporated in the present formulation, the proposed algorithmic framework is intended to be system-level and general enough to accommodate such additions. In particular, transmission constraints based on DC power-flow models can be included without difficulty~\cite{metzler03nash-cournot,kannanoms13}.

\paragraph{Multi-objective OPF} 
Recent works have increasingly examined both OPF and unit-commitment settings through the lens of multiple objectives~\cite{alrasheedi2023investing, huang2022dual}. In addition to traditional economic considerations, objectives related to carbon emissions and renewable-energy penetration are now routinely incorporated. Emission-related objectives often resemble generation-cost models and typically take a quadratic form. Objectives that promote renewable-energy penetration naturally lead to fractional structures, which can be expressed as follows.
\begin{align*}
f_2(\theta) = \frac{\sum_{i \in \mathcal{N}_{1}} x_i(\theta)}{\sum_{i \in \mathcal{N}_{2}} x_i}. 
\end{align*}
We emphasize that both the non-renewable generation costs $f_1$ and the renewable-generation prediction model depend on the parameter~$\theta^*$, which is obtained by solving the associated machine learning problem. In the next section, we turn to the learning problem itself and discuss the estimation of the model parametrized by~$\theta$.



\paragraph{Power Prediction Problem:}
In our setting, the available data consist of pairs $(y_{\text{fea}}, y_{\text{true}})$, where $y_{\text{fea}}$ represents the features and $y_{\text{true}}$ denotes the corresponding ground-truth values. The goal is to learn a machine learning model $M(\theta)$ that best fits this data. Depending on the application, $M(\theta)$ may range from simple parametric forms such as linear or polynomial models to more complex architectures including decision trees, neural networks, and transformers. When the primary focus is predictive accuracy, the learning task reduces to solving
\begin{align*}
\min_{\theta} & \hspace{1mm} \sum_{i=1}^{N} d_i(M(\theta, y_i^{\text{fea}}), y_i^{\text{true}}) \\ 
\text{subject to:} & \hspace{1mm} \theta \in \theta(x).
\end{align*}
Here, $d_i(\cdot)$ denotes a distance or discrepancy measure between prediction and truth. A canonical choice is the squared-error loss:
\begin{align*}
    d_i \left( M \left(\theta, y_i^{\text{fea}} \right), y_i^{\text{true}} \right) = \left(M \left(\theta, y_i^{\text{fea}} \right) - y_i^{\text{true}} \right)^2.
\end{align*}
However, $d_i$ is not restricted to accuracy-based measures alone. Alternatives include metrics such as mean bias error or sparsity-promoting penalties, and these can be used either independently or in combination with accuracy terms. Accuracy itself may also be represented through cross-entropy losses, log-loss formulations, or $\ell_1$-type objectives. The reader is referred to~\cite{li2024algorithm, aswin2021} for further discussion of such loss structures. Multi-objective formulations, such as those considered in~\cite{aswin2021}, also provide a flexible route for balancing different performance criteria. In the present work, we restrict our attention to accuracy-based losses, although the overall framework readily accommodates general combinations of objectives.

Some illustrative examples of commonly used and custom-designed loss functions are shown below. Custom losses refer to user-defined objectives that go beyond the standard set typically supported by current machine learning libraries. For additional discussion on such practical aspects, the reader is referred to~\cite{aswin2021}. Note that $\|\theta\|^2$ in the expression below serves as a sparsity-promoting term.
\begin{align*}
& d(M(\theta,y^{fea}),y^{true}) = \|M(\theta,y^{fea})-y^{true}\|^2 + \|\theta \|^2,  \hspace{5mm} \text{Custom-Loss,}\\  
& d(M(\theta,y^{fea}),y^{true}) = -\sum_{i=1}^{N} y_i^{true} \log{(M(\theta,y_i^{fea}))}-\sum_{i=1}^{N} (1-y_i^{true}) \log{(1-M(\theta,z,y_i^{fea}))},  \hspace{5mm} \text{Log Loss.}
\end{align*}

\subsection{Retail Optimization}
Revenue maximization is central to any product portfolio, right from stock markets to retail. While there are multiple problems of interest from a portfolio perspective, we look at two key aspects, one arising in a consumer setup and the other stemming in the case of commodity markets. The case in the consumer setup may be referred to as Bertrand pricing, where product prices influence sales, which in turn contribute towards revenue. Commodity markets on the other hand operate by quantity of products sold. In these cases, prices are dictated by production quantities and sales. Examples are Nash-cournot based game theoretic electricity markets and financial stock markets. These operate on the underlying phenomenon of market clearing, where a price is arrived at based on bidding in quantities or stocks. Given that the scope of our work is not in game theory, we focus on the following optimization-related setups.   

\subsubsection{Retail Pricing}
Logit models~\cite{harker90competition,retail-best-14} have predominantly been the mainstay of literature. Similar to price $p$, we note that product attributes  $a_p$ like design specification become very relevant to the question. An example of a binary logit model is given as follows.
$$d(p,a_p) = \eta_p \left(1+e^{-U(p,a_p)}\right)^{-1}.$$
The utility function $U(.)$ is usually linear or quadratic. Consider a winter-wear product (shawl or gloves). Let $p$ and $a_1$ denote the price and number of woolen layers respectively. Then, the utility function may take the form $U(p,a_p) = 0.1p-0.01a_1$. The equation above is a case restricted to a single-firm with the customer presented with two options, namely purchase (high utility) and no-purchase (the scalar term ``1'' indicates the no-purchase option or no-utility). The purchase levels are fractions or probability levels, which are in turn multiplied by the market population $\eta_p$. We note that this extends to a multinomial-logit model~\cite{harker90competition} when there are multiple products, each with an exponential component defined in the denominator. However, we do not focus on product attributes in this work. For the current version of our work, we only focus on prices and not product attributes. 

We consider a firm with multiple products $N$,
indexed by $i$, price denoted by $p_{i}$ across a consumer segment. For completeness, we present both demand functions of the MNL model (i.e., there exists competition among products) and the binary logit model (i.e., no competition and only two options: buy or do not buy) with product $i$ as below:
\begin{align} \label{eq:demand_function}
d_i(p) = \frac{\exp \left(U_i\right)}{1+ \sum_{j=1}^{N} \exp \left( U_j \right)} \hspace{2mm} \text{and} \hspace{2mm} d_i(p) = (1 + \exp{(-U_i)})^{-1}
\end{align}
where $U_i=\theta_{i,0} p_i + \theta_{i,1}$, and $\theta_{i,0}$ and $\theta_{i,1}$ are parameters to be estimated (e.g., $\theta_{i,0}$ would be the single price sensitivity parameter).

Our formulation focuses on pricing decisions and incorporates temporary price reductions in the form of promotions, which are assumed to be known in advance. Following prior work~\cite{kannan2020computerizedsecond, cohen17, kannan2023solving}, the resulting revenue maximization problem is formulated as follows.
\begin{align*}
&\hspace{10mm} \min_{p} \hspace{1mm} f(p) = \sum_{i \in N} \sum_{t \in T} -p_i^t d(\theta_i^*, p_i^t) \\
 \text{subject to:} \hspace{2mm} 
& \rho_{i}^{low} \leq  p_i^t \leq  \rho_{i}^{upp}, \:\qquad \forall i \in N, \hspace{1mm} \forall t \in T, \hspace{3mm} & \text{Promotion/Markdown constraints.} \nonumber 
\end{align*}
Here, $p_i^t$ refers to the price of the product $i$ at time $t$, $p_i^{low}$ and $p_i^{upp}$ represent the lower and upper price bounds for product $i$, respectively. 
During several times of the year, stores do not receive any replenishments due to regulations, transportation issues, and sudden power outages. Discounts, i.e., promotions and markdowns, are mostly seasonal and are offered at specific sets of time $T$. An example of inter-item constraints is the following: If milk is priced at 2 Euros per liter, then yogurt should be priced at the least around 1.5 Euros per liter. While we assume that the demand is estimated from a logit model, other approximations also exist, depending on the problem context. The reader is asked to refer to~\cite{demand-survey13} for more details. 

\paragraph{Learning Problem:}
The logit model has been widely used in machine learning beyond demand estimation, with prominent examples including logistic regression and the cross-entropy loss. In classification settings, model parameters are typically estimated by maximizing the log-likelihood, or equivalently by minimizing the negative log-likelihood; however, this likelihood-based formulation is inherently tied to probabilistic classification and is not central to the computational optimization framework considered here. 

In the context of demand estimation, the logit model can instead be viewed as a smooth nonlinear mapping parameterized by $\theta$~\cite{mcfadden84, logit23, Kreimeier2023}. We estimate $\theta$ by minimizing a mean squared error objective, which yields a continuously differentiable loss with a well-defined gradient and Hessian structure, making it well suited for algorithmic analysis and large-scale numerical optimization.
\begin{align*}
\min_{\theta} \hspace{2mm} L(\theta) = \frac{1}{N\cdot T}\sum_{i=1}^{N}  \sum_{t=1}^{T} \left( (1+ \exp{(\theta^0_i p_{i}^t + \theta^1_i}))^{-1} -  y_{i,t}^{true} \right)^2.
\end{align*}
Here, $T$ denotes the number of observations per product, and $y^{true}_{i,t}$ refers to the observed sales of product $i$ at time $t$. While conceptually straightforward, this formulation is more challenging to handle computationally. We therefore propose a modified variant obtained by applying a logarithmic transformation, which results in a more tractable and convex objective:
\begin{align*}
\min_{\theta \in \Theta} \hspace{1mm} h(\theta) = \frac{1}{N\cdot T} \sum_{i=1}^{N}  \sum_{t=1}^{T} \left( \theta_{i,0} p_{i}^t + \theta_{i,1} -\log{\left(\frac{1}{y_{i,t}^{true}}-1\right)}\right)^2 = \frac{1}{N\cdot T} \sum_{i=1}^{N} \|A_i \theta - b_i \|_2^2.
\end{align*}
Here, $A_i \in \mathbb{R}^{T \times 2}$ and $b_i \in \mathbb{R}^{T}$ are constant matrices and vectors defined appropriately. The resulting problem is a linear regression model, which can be solved either in closed form or iteratively; in this work, we adopt an iterative solution approach.

It is straightforward to observe that the solution to the above problem can be obtained in a single step by solving either a linear system or a sequence of linear systems (equivalently, a sequence of equality-constrained quadratic optimization problems). However, this corresponds to a textbook, offline setting. In many practical applications, the problem is inherently online, in the sense that new samples arrive continuously and the dataset grows over time. 
That implies recomputing $\theta^*$ from scratch after each batch processing would require repeatedly constructing and solving increasingly large linear systems, which would consume extensive time and memory.  

A representative example is recommender and assortment systems in online retailing/advertising, where user feedback (clicks, purchases) is observed, and the platform needs to simultaneously (a) learn choice-model parameters and (b) optimize decisions based on the current estimates. In such applications, the binary logit model is a natural choice: the purchase probability is modeled as a logistic function of item features, and the model has to be updated frequently to track evolving user preferences. In systems operating at the scale of Amazon or Netflix, the dataset is massive, and full retraining ``in one shot'' after each data update is typically infeasible. Furthermore, this approach has another drawback that the final model may already be outdated by the time it is deployed~\cite{bottou2018optimization}. This is a common motivation for incremental/online learning methods, which update model parameters in deployed systems using a small number of recent samples and cheap gradient-type steps rather than full retraining~\cite{mcmahan2013ad, losing2018incremental}.

Moreover, the learning problem is coupled to decision-making, which further motivates our joint learning-optimization method as in~\eqref{alg:jointalg}. Specifically, market conditions are frequently driven by dynamic competitor actions~\cite{harker90competition}. In online sales, competitor pricing fluctuates rapidly. Waiting to retrain models from scratch would cause inventory and revenue losses. Consequently, modern frameworks tend to use online learning methods, where parameters are updated through stochastic gradients derived from recent samples. This allows the model to adapt continuously to changing demand signals~\cite{rusmevichientong2010dynamic}.

We note that in our case $\Theta$ is a very simple set with bound constraints defined as follows. 
\begin{align*}
    \Theta = \left\{ \left\{\theta_{i,0}, \theta_{i,1} \right\}_{i=1}^N | \theta_{i}^0 \geq 0, \forall i \in \{1,...,N\} \right\}.
\end{align*}

\section{Algorithms}
\label{sec:algs}
As discussed earlier, the two-level problem structure can be approached in multiple ways. A natural starting point is to solve the learning problem to completion and then solve the resulting optimization problem. An alternative is to alternate between the learning and optimization components by solving the learning subproblem to a prescribed accuracy, updating the optimization variables, and repeating this process. For pseudoconvex or fractional-type problems, straightforward gradient-based schemes do not generally guarantee convergence, which motivates the use of extragradient-type methods. While such schemes have been studied in deterministic~\cite{Korpelevich_1976} and stochastic settings~\cite{kannan2019optimal}, their applicability to coupled \emph{joint optimization and learning} remains an open question. Motivated by applications such as portfolio optimization and unit commitment—where fractional or pseudoconvex structures naturally arise and classical gradient updates may be inadequate—we evaluate the following algorithmic strategies:
\begin{itemize}
\item Solve the learning problem fully, and then solve the optimization problem.
\item Solve the learning problem to a fixed accuracy, solve/update the optimization problem, and iterate between the two.
\item Propose a coupled scheme consisting of a single extragradient step at the outer level and a single gradient step at the inner level.
\end{itemize}
In what follows, we concentrate on the third approach and develop the proposed coupled scheme. Before presenting the algorithm, we briefly review the required preliminaries.
\paragraph{Basics:}
Given a probability space $(\Omega, \mathcal{F}, P)$, where $\Omega$ denotes the sample space, $\mathcal{F}$ is a $\sigma$-algebra on $\Omega$, and $P$ is a probability measure on $(\Omega,\mathcal{F})$, we consider a mapping
\[
    \chi : \Omega \to \mathbb{R}^n.
\]
We assume throughout that $\chi$ is a random variable, and all expectations are taken with respect to its induced distribution. This assumption is standard and follows directly from classical results in measure theory; see, e.g., any standard reference on probability spaces and measurable mappings. We define the learning problem as
\begin{align*}
    \min_{\theta \in \Theta} \; \mathbb{E}\!\left[ g(\theta,\chi) \right],
\end{align*}
where $\Theta \subseteq \mathbb{R}^n$ is a convex set and the function $g(\cdot,\chi)$ is strongly convex for every realization of $\chi$.  
For notational convenience, we define
\[
    h(\theta) \triangleq \mathbb{E}\!\left[ g(\theta,\chi) \right], 
    \quad \forall \theta \in \Theta.
\]
The stochasticity of the learning problem arises entirely through the random variable $\chi$, while the optimization variable $\theta$ is deterministic. This viewpoint will be adopted throughout the paper.

\subsection{Algorithmic Framework}\label{sec:single_step_alg}
We first consider the case in which a single update is performed sequentially in the learning and optimization spaces. This single-step framework serves as a baseline and is later extended to a multi-step variant. In the present setting, the outer-level optimization variable $x$ is treated deterministically, while stochasticity is incorporated only in the learning variable $\theta$ to capture uncertainty arising from data-driven model estimation. This modeling choice is motivated by the applications considered. In retail pricing, effects such as demand variability and seasonality are implicitly captured through the learned demand model, while the resulting pricing optimization is deterministic given the current estimates. Similarly, in optimal power flow, uncertainty primarily arises from renewable generation (e.g., wind), whereas conventional generation costs and demand profiles exhibit predictable patterns and are handled through learning-based prediction. As a result, the outer-level problem can be reasonably approximated as deterministic in most practical settings. Extensions that explicitly incorporate stochasticity in the $x$-updates are possible and left for future work.

Our basic joint learning–optimization algorithm takes the following form. For a prelude on related joint algorithms in purely convex settings, we refer the reader to~\cite{kannan2015distributed}.
\begin{align}\label{alg:jointalg}
x_{k+1/2} & = \Pi_{X} \left(x_k-\gamma_k \nabla f(x_k,\theta_k) \right),    \\ 
x_{k+1} & = \Pi_{X} \left(x_k-\gamma_k \nabla f(x_{k+1/2},\theta_k) \right), \nonumber \\ 
\theta_{k+1} & = \Pi_{\Theta} \left( \theta_k -\beta_k \left(\nabla h(\theta_k)+w_k\right)\right). \nonumber
\end{align}
Ideally, the iterate $\theta_k$ serves as a surrogate for the true optimizer $\theta^*$ of the learning problem. Intuitively, $\theta_k$ evolves on a slower timescale and gradually approaches $\theta^*$ as the algorithm progresses. Prior to developing the convergence theory for the above scheme, we state the assumptions imposed on the steplength sequences and the objective functions.

\begin{assumption} Steplength Sequences. 
\begin{itemize}
\item[(a)] The steplength sequences for updating the decision variables are square summable and not summable. 
\begin{align*}
   \sum_{k=0}^{\infty} \gamma_k = \infty, \hspace{2mm} \sum_{k=0}^{\infty} \gamma_k^2 < \infty. 
\end{align*}
\item[(b)] The steplength sequences for updating the learning parameters are square summable and not summable.
\begin{align*}
    \sum_{k=0}^{\infty} \beta_k = \infty, \hspace{2mm} \sum_{k=0}^{\infty} \beta_k^2 < \infty,
\end{align*}
which implies $\lim_{k\to \infty} \beta_k=0$. Moreover, we set $\beta_k \leq \beta_0 \leq \frac{2\mu_h}{L_h^2}$ for all $k$. 
\item[(c)] For some $\tau \in (0,1)$, it holds that
\begin{align*}
    \sum_{k=0}^{\infty} \gamma_k^{2-\tau} < \infty, \quad 
    \lim_{k\to \infty} \frac{\gamma_k^{\tau}}{\beta_k} = 0.
\end{align*}
\end{itemize}
\label{assumption:A1steps}
\end{assumption}

\begin{assumption} 
Gradient Specifications.
\label{assumption:objs}
\begin{itemize}
\item[(a)] The objective $f$ is pseudoconvex, i.e., for any $x_1,x_2 \in X$, 
\begin{align*}
    \nabla f(x_1)^T (x_2-x_1)\geq 0 \Longrightarrow f(x_2)\geq f(x_1).
\end{align*}
\item[(b)] The function $h$ is strongly convex with parameter $\mu_h>0$, i.e., 
\begin{align*}
    \left( \nabla h(\theta_1)-\nabla h(\theta_2)\right)^T \left( \theta_1-\theta_2 \right) \geq \mu_h||\theta_1 - \theta_2||^2
\end{align*}
for any $\theta_1, \theta_2\in \Theta$.
\item[(c)] The gradients of the objectives  $\nabla f$ and $\nabla h$ are Lipschitz continuous with positive constants $L_f>0$ and $L_h>0$ respectively. In other words, for any $x_1,x_2 \in X$ and $\theta_1,\theta_2 \in \Theta$, it holds
\begin{align*}
    ||\nabla f(x_1, \theta)-\nabla f(x_2, \theta)|| &\leq L_f ||x_1-x_2||, \\
    ||\nabla f(x, \theta_1)-\nabla f(x, \theta_2)|| &\leq L_{\theta} ||\theta_1-\theta_2||, \\
    ||\nabla h(\theta_1)-\nabla h(\theta_2)|| &\leq L_h ||\theta_1-\theta_2||.
\end{align*}
Moreover, $||\nabla f(x, \theta^*)||\leq B$ for all $x\in X$, where $B>0$ is a scalar and $\theta^* = \argmin_{\theta \in \Theta} \hspace{1mm} h(\theta)$.
\end{itemize}
\end{assumption}
We next turn to the convergence analysis of the proposed algorithm. Before presenting the formal results, we introduce a standard assumption on the stochastic error arising in the learning updates. This assumption imposes boundedness on the first and second conditional moments of the noise sequence and ensures well-posedness of the stochastic approximation scheme. Similar assumptions have been extensively studied in the literature; see, for instance,~\cite{kannan2019optimal,kannan2015distributed}. We restate the assumption here for completeness.
\begin{assumption} \label{assumption:first_second_moment} Boundedness on Uncertainty: For all $k$, the conditional first moments $\mathbb{E}[w_k | \mathcal{F}_k]=0$ and the conditional second moments $\mathbb{E}[||w_k||^2 | \mathcal{F}_k]\leq \nu^2$, where constant $\nu >0$.
\end{assumption}
We now recall two classical convergence results from stochastic approximation theory that will be used repeatedly in the subsequent analysis.
\begin{lemma} (Lemma $10$ in \cite{polyak1987introduction})\label{lemma:convergence_as_1}
    Suppose $V_k$ is a sequence of nonnegative random variables to $\mathcal{F}_k$. If  $\mathbb{E}\left[V_{k+1}|\mathcal{F}_k \right] \leq (1-\delta_k) V_k +\xi_k$, where scalars $0 \leq \delta_k\leq 1, \xi_k>0$ satisfy $\sum_{k=0}^{\infty} \delta_k=\infty$, $\sum_{k=0}^{\infty} \xi_k<\infty$ and $\lim_{k\to \infty}\frac{\xi_k}{\delta_k}=0$. Then $V_k \to 0$ almost surely and $\mathbb{E}\left[ V_k \right]\to 0$ as $k \to \infty$.
\end{lemma}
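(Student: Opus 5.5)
The plan is to prove the two conclusions separately: first convergence in expectation by a deterministic recursion argument, then almost sure convergence by a supermartingale (Robbins--Siegmund type) argument. Throughout I assume $V_0$ is integrable, which is implicit in the statement, so that all $\mathbb{E}[V_k]$ are finite by induction.

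\textbf{Convergence in expectation.} Set $u_k \triangleq \mathbb{E}[V_k] \ge 0$. Taking total expectations in the hypothesis and using the tower property gives the deterministic recursion
\begin{align*}
u_{k+1} \le (1-\delta_k)u_k + \xi_k .
\end{align*}
Fix $\varepsilon>0$. Since $\xi_k/\delta_k \to 0$, there is $K$ with $\xi_k \le \varepsilon \delta_k$ for all $k \ge K$. Subtracting $\varepsilon$ from both sides then yields
\begin{align*}
u_{k+1}-\varepsilon \le (1-\delta_k)(u_k-\varepsilon) \qquad \text{for } k\ge K .
\end{align*}
Iterating, and using $0\le 1-\delta_k \le 1$, we get
\begin{align*}
(u_{k+1}-\varepsilon)_+ \le \prod_{j=K}^{k}(1-\delta_j)\,(u_K-\varepsilon)_+ .
\end{align*}
Because $1-\delta_j \le e^{-\delta_j}$ and $\sum_j \delta_j=\infty$, the product tends to zero. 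Hence $\limsup_k u_k \le \varepsilon$, and since $\varepsilon>0$ is arbitrary, $u_k \to 0$.

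\textbf{Almost sure convergence.} Rewrite the hypothesis as
\begin{align*}
\mathbb{E}[V_{k+1}\mid\mathcal{F}_k] \le V_k - \delta_k V_k + \xi_k ,
\end{align*}
which has exactly the Robbins--Siegmund form. To exploit it, define
\begin{align*}
W_k \triangleq V_k + \sum_{j\ge k}\xi_j ,
\end{align*}
which is finite because $\sum_j\xi_j<\infty$. Then $W_k \ge 0$ and
\begin{align*}
\mathbb{E}[W_{k+1}\mid\mathcal{F}_k] \le W_k - \delta_k V_k \le W_k .
\end{align*}
So $W_k$ is a nonnegative supermartingale. By the supermartingale convergence theorem, $W_k$ converges almost surely to a finite limit. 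Since the tail sums $\sum_{j\ge k}\xi_j$ tend to zero, $V_k$ converges almost surely to some finite random variable $V_\infty \ge 0$.

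\textbf{Identifying the limit.} Summing the inequality $\mathbb{E}[W_{k+1}\mid\mathcal{F}_k] \le W_k-\delta_kV_k$ in total expectation gives
\begin{align*}
\sum_k \delta_k\,\mathbb{E}[V_k] \le \mathbb{E}[W_0] < \infty .
\end{align*}
By monotone convergence, $\sum_k\delta_kV_k<\infty$ almost surely. Combined with $\sum_k\delta_k=\infty$, this forces $\liminf_k V_k = 0$ almost surely. Since $V_k$ has an almost sure limit, that limit must be zero. An alternative route is Fatou's lemma: $\mathbb{E}[V_\infty] \le \liminf_k \mathbb{E}[V_k] = 0$ by the first part, so the nonnegative variable $V_\infty$ vanishes almost surely.

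\textbf{Main obstacle.} The delicate step is the almost sure part, specifically the construction of the auxiliary supermartingale $W_k$ and the verification that its nonnegativity and integrability hold. Everything else is routine manipulation of the recursion.
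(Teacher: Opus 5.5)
Your proof is correct. The paper gives no argument of its own for this lemma; it quotes the result from Polyak. What you have written is therefore a self-contained proof of the cited fact, and it is the standard one: a deterministic recursion for $\mathbb{E}[V_k]$, supermartingale convergence of $W_k=V_k+\sum_{j\ge k}\xi_j$, and then either $\sum_k\delta_kV_k<\infty$ or Fatou to force the almost-sure limit to zero.

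Two points your write-up exposes are worth keeping.

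First, the integrability of $V_0$ that you add is genuinely needed for $\mathbb{E}[V_k]\to 0$, and it is missing from the statement as quoted. For a counterexample, take $V_k=V_0\prod_{j<k}(1-\delta_j)$ with $V_0$ $\mathcal{F}_0$-measurable, $\mathbb{E}[V_0]=\infty$ and all $\delta_j<1$.

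Second, the ratio condition $\xi_k/\delta_k\to 0$ enters only your expectation step, and it is actually redundant given $\sum_k\xi_k<\infty$. Your own computation shows that $\mathbb{E}[W_k]$ is nonincreasing and that $\sum_k\delta_k\mathbb{E}[V_k]<\infty$. Together with $\sum_k\delta_k=\infty$, this already forces $\mathbb{E}[V_k]\to 0$.
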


\begin{lemma} (Robbins--Siegmund, Lemma $11$ in \cite{polyak1987introduction}) \label{lemma:convergence_as_2}
    Suppose $V_k, u_k, \delta_k, \xi_k$ are nonnegative random variables. If  $\mathbb{E}\left[V_{k+1}|\mathcal{F}_k \right]\leq (1+\delta_k) V_k - u_k +\xi_k$ almost surely, where $\sum_{k=0}^{\infty} \delta_k<\infty$ almost surely and $\sum_{k=0}^{\infty} \xi_k<\infty$ almost surely. Then $\sum_{k=0}^{\infty} u_k < \infty$ almost surely and $ V_k \to V^*$ as $k \to \infty$, where $V^* \geq 0$ is some random variable.
\end{lemma}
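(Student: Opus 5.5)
This is the classical Robbins--Siegmund theorem. I would follow the standard route: normalize away the multiplicative factor $(1+\delta_k)$, build a supermartingale, and localize it so that the martingale convergence theorem applies. Throughout I assume the filtration is such that $V_k,u_k,\delta_k,\xi_k$ are $\mathcal{F}_k$-measurable, as implicitly required by the statement.

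\textbf{Step 1 (normalization).} Set $P_0=1$ and $P_k=\prod_{j=0}^{k-1}(1+\delta_j)$. Since $\log(1+\delta_j)\le \delta_j$ and $\sum_k\delta_k<\infty$ almost surely, the sequence $P_k$ is nondecreasing and converges almost surely to a finite limit $P_\infty \ge 1$. Define
\[
V'_k = \frac{V_k}{P_k}, \qquad u'_k=\frac{u_k}{P_{k+1}}, \qquad \xi'_k=\frac{\xi_k}{P_{k+1}} .
\]
Because $P_{k+1}$ is $\mathcal{F}_k$-measurable, dividing the hypothesis by $P_{k+1}$ gives
\[
\mathbb{E}[V'_{k+1}\mid\mathcal{F}_k]\le V'_k-u'_k+\xi'_k .
\]
Moreover $\sum_k\xi'_k\le\sum_k\xi_k<\infty$ almost surely.

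\textbf{Step 2 (supermartingale and localization).} Let
\[
W_k = V'_k+\sum_{j<k}u'_j-\sum_{j<k}\xi'_j .
\]
By Step 1, $\mathbb{E}[W_{k+1}\mid\mathcal{F}_k]\le W_k$. The difficulty is that $W_k$ need not be bounded below, nor even integrable, because $\sum_k\xi'_k$ is only almost surely finite. This is the step I expect to be the main obstacle. I would handle it by localization. For $a\in\mathbb{N}$, let
\[
\tau_a=\inf\Big\{k:\ \sum_{j\le k}\xi'_j>a\Big\}.
\]
Since $\xi'_k$ is $\mathcal{F}_k$-measurable, $\{\tau_a>k\}\in\mathcal{F}_k$, so $\tau_a$ is a stopping time. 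For $k\le\tau_a$ we have $\sum_{j<k}\xi'_j\le a$, so the stopped process $W_{k\wedge\tau_a}+a$ is nonnegative. It is also a supermartingale, since the supermartingale inequality is preserved under stopping and integrability follows from nonnegativity together with $V_0$ integrable (or by conditioning on $\mathcal{F}_0$ if needed). By Doob's convergence theorem it converges almost surely.

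On the event $\{\sum_j\xi'_j\le a\}$ we have $\tau_a=\infty$, so $W_k$ itself converges there. Taking the union over $a\in\mathbb{N}$ covers the almost-sure event $\{\sum_j\xi'_j<\infty\}$. Hence $W_k$ converges almost surely to a finite limit.

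\textbf{Step 3 (unwinding).} Since both $W_k$ and $\sum_{j<k}\xi'_j$ converge almost surely, so does $V'_k+\sum_{j<k}u'_j$. Both terms are nonnegative and the second is nondecreasing. Therefore $\sum_j u'_j$ is bounded, hence convergent, and consequently $V'_k$ converges almost surely.

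Finally, $1\le P_k\le P_\infty<\infty$ gives
\[
\sum_k u_k\le P_\infty\sum_k u'_k<\infty \quad\text{almost surely},
\]
and $V_k=P_kV'_k$ converges almost surely to a limit $V^*\ge0$, since it is a product of two almost surely convergent sequences. This establishes both claims of Lemma~\ref{lemma:convergence_as_2}.
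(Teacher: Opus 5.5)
The paper gives no proof of this lemma; it cites Polyak's book. Your argument is the classical Robbins--Siegmund proof behind that citation (normalize by $P_k=\prod_{j<k}(1+\delta_j)$, compensate to get a supermartingale, localize with $\tau_a$, then unwind), and it is correct, including the adaptedness assumption you rightly make explicit. The one step that needs an extra line is the stopped supermartingale inequality: $W_{k+1}$ need not be integrable, so check it directly on $\{\tau_a>k\}\in\mathcal{F}_k$, where $W_{k+1}+a\ge 0$ and the conditional expectation is therefore well defined, rather than appealing to generic stopping of $W$.
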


\paragraph{Intuition:}The proposed algorithm is motivated by the need to balance learning accuracy with computational efficiency while retaining theoretical robustness in pseudoconvex regimes. Rather than insisting on full convergence of the learning problem at every stage, we allow the learning variable to evolve gradually alongside the optimization variable. The optimization step is therefore guided by a \emph{moving surrogate} of the true model, reflecting the current state of learning. To address the pseudoconvex or fractional structure of the outer problem—where naive gradient descent may fail—we incorporate an extragradient correction that anticipates curvature and stabilizes the update. The resulting scheme couples a cautious, predictive update in the decision space with a standard stochastic gradient step in the learning space, operating on different timescales. This design enables consistent progress toward optimality without requiring exact solutions of the learning problem at intermediate stages, thereby providing a practical and theoretically sound framework for joint learning and optimization.

\subsubsection{Convergence Analysis} The main embodiment of our analysis lies in establishing relationships between successive iterates of the algorithm. To this end, we couple the sequences $\{x_k\}$ and $\{\theta_k\}$ into a single composite quantity and derive inequalities that link the iterates across consecutive steps. These relations capture the interaction between the optimization and learning components of the algorithm and reflect the impact of inexact learning on the optimization updates. By carefully bounding the resulting recursion and invoking Lemmas~\ref{lemma:convergence_as_1} and~\ref{lemma:convergence_as_2}, we are able to formally establish convergence of the proposed scheme. We begin this analysis with the following lemma, which relates successive iterates generated by the joint extragradient method.
 
\begin{lemma} \label{lemm:xiter}
Consider the iterates generated by~(\ref{alg:jointalg}). Let assumptions~\ref{assumption:A1steps} and~\ref{assumption:objs} hold. Then, the following hold for any iteration $k$. 
\begin{align*} 
||x_{k+1}-x^* ||^2 \leq (1+ t_{A,k}) ||x_k-x^* ||^2 - t_{B} ||x_k-x_{k+1} ||^2 + 2u_k +  t_{C} \gamma_k^2 + t_{D,k} ||\theta_k-\theta^* ||^2, 
\end{align*} 
where $u_k \leq0$, $t_{A,k}>0$, $t_{B}>0$, $t_{C}>0$, and $t_{D,k}>0$ are appropriately defined bounded scalars.
\end{lemma}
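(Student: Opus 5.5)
We will follow the classical Korpelevich extragradient analysis and track the perturbation caused by using $\theta_k$ instead of $\theta^*$. Here $x^*$ denotes a solution of $\min_{x\in X} f(x,\theta^*)$. Such a point satisfies the first-order condition $\nabla f(x^*,\theta^*)^T(x-x^*)\ge 0$ for all $x\in X$. Pseudoconvexity then gives $f(x,\theta^*)\ge f(x^*,\theta^*)$. The key replacement for monotonicity is the consequence that $\nabla f(x,\theta^*)^T(x-x^*)\ge 0$ for all $x\in X$. Otherwise, since the contrapositive of pseudoconvexity (applied at $x$) gives $f(x^*)<f(x)$ whenever $\nabla f(x)^T(x^*-x)>0$, a strict violation would be contradicted only in the "wrong" direction. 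So we use the standard fact that a pseudoconvex function has a pseudomonotone gradient: $\nabla f(x^*)^T(x-x^*)\ge0 \Rightarrow \nabla f(x)^T(x-x^*)\ge 0$.

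We define $u_k := -\gamma_k \nabla f(x_{k+1/2},\theta^*)^T(x_{k+1/2}-x^*)\le 0$; this is the term labelled $2u_k$ in the statement.

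\textbf{Step 1 (projection inequalities).} Using the projection property $\|\Pi_X(y)-z\|^2\le \|y-z\|^2-\|y-\Pi_X(y)\|^2$ with $z=x^*$ and $y=x_k-\gamma_k\nabla f(x_{k+1/2},\theta_k)$, we obtain
\[
\|x_{k+1}-x^*\|^2\le \|x_k-x^*\|^2-\|x_k-x_{k+1}\|^2-2\gamma_k\nabla f(x_{k+1/2},\theta_k)^T(x_{k+1}-x^*).
\]
We split $x_{k+1}-x^*=(x_{k+1}-x_{k+1/2})+(x_{k+1/2}-x^*)$. We also write $\nabla f(\cdot,\theta_k)=\nabla f(\cdot,\theta^*)+e_k$, where $\|e_k\|\le L_\theta\|\theta_k-\theta^*\|$ by Assumption~\ref{assumption:objs}(c). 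The piece $-2\gamma_k\nabla f(x_{k+1/2},\theta^*)^T(x_{k+1/2}-x^*)$ is exactly $2u_k\le 0$.

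\textbf{Step 2 (the extragradient cancellation).} For the term $\gamma_k\nabla f(x_{k+1/2},\theta_k)^T(x_{k+1/2}-x_{k+1})$, we use the variational characterization of $x_{k+1/2}$, namely $(x_k-\gamma_k\nabla f(x_k,\theta_k)-x_{k+1/2})^T(x_{k+1}-x_{k+1/2})\le 0$. This converts the term into $\gamma_k(\nabla f(x_{k+1/2},\theta_k)-\nabla f(x_k,\theta_k))^T(x_{k+1/2}-x_{k+1})$ plus terms combining into $-\|x_k-x_{k+1/2}\|^2-\|x_{k+1/2}-x_{k+1}\|^2$. The Lipschitz bound $L_f$ and Young's inequality then absorb the first product, provided $\gamma_k L_f<1$, which holds eventually because $\gamma_k\to0$; finitely many early iterations can be folded into constants. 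Using $\|x_k-x_{k+1}\|^2\le 2\|x_k-x_{k+1/2}\|^2+2\|x_{k+1/2}-x_{k+1}\|^2$, this leaves a term $-t_B\|x_k-x_{k+1}\|^2$ with, for instance, $t_B=(1-\gamma_0L_f)/2$, possibly after adjusting constants.

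\textbf{Step 3 (perturbation terms).} The remaining cross terms have the form $2\gamma_k e_k^T(x_{k+1/2}-x^*)$, together with an analogous term with $x_{k+1}$. We bound them by Young's inequality with weights $\gamma_k^{\tau}$:
\[
2\gamma_k\|e_k\|\,\|x_{k+1/2}-x^*\|\le \gamma_k^{2-\tau}\|x_{k+1/2}-x^*\|^2+\gamma_k^{\tau}L_\theta^2\|\theta_k-\theta^*\|^2 .
\]
Next, $\|x_{k+1/2}-x^*\|^2\le 2\|x_k-x^*\|^2+2\|x_{k+1/2}-x_k\|^2$ and $\|x_{k+1/2}-x_k\|\le\gamma_k\|\nabla f(x_k,\theta_k)\|\le \gamma_k(B+L_\theta\|\theta_k-\theta^*\|)$. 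With these, the term splits into three pieces. One is $t_{A,k}\|x_k-x^*\|^2$ with $t_{A,k}=O(\gamma_k^{2-\tau})$, which is summable by Assumption~\ref{assumption:A1steps}(c). Another is $t_C\gamma_k^2$ with $t_C$ of order $B^2$. The last is $t_{D,k}\|\theta_k-\theta^*\|^2$ with $t_{D,k}=O(\gamma_k^{\tau}+\gamma_k^2)$. This choice of $\tau$-weights is designed to match Assumption~\ref{assumption:A1steps}(c) for later use with Lemma~\ref{lemma:convergence_as_2}.

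\textbf{Main obstacle.} The main obstacle is Step 2 combined with the perturbation. The $\theta$-error enters both the extrapolation and the correction step, so the cancellation must be done with $\theta_k$ frozen. Only afterwards should $\theta_k$ be swapped for $\theta^*$ in the inner product against $x_{k+1/2}-x^*$, to produce $u_k$. A second concern is the sign of $u_k$, which depends on pseudomonotonicity, not monotonicity. We will verify that step carefully using the definition in Assumption~\ref{assumption:objs}(a) together with the optimality of $x^*$.
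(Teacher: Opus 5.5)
Your argument is correct, but its central step differs from the paper's. After the same projection inequality as your Step~1, the paper never uses the variational characterization of $x_{k+1/2}$. Instead it places $u_k=\gamma_k(x^*-x_k)^T\nabla f(x_k,\theta^*)$ at the base point $x_k$, which is nonpositive by the same pseudomonotonicity fact you invoke. It then splits off three remainders: the extrapolation error $\nabla f(x_{k+1/2},\theta^*)-\nabla f(x_k,\theta^*)$, the learning error $\nabla f(x_{k+1/2},\theta_k)-\nabla f(x_{k+1/2},\theta^*)$, and the term $2\gamma_k(x_k-x_{k+1})^T\nabla f(x_{k+1/2},\theta_k)$. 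Each is bounded by Young's inequality with weights $\eta_1\gamma_k^2$, $\eta_2\gamma_k^{2-\tau}$ and $\eta_3$. The bound $\|\nabla f(\cdot,\theta^*)\|\le B$ then turns everything except the $\theta$-error into $O(\gamma_k^2)$ terms. This gives $t_{A,k}=\eta_1\gamma_k^2+\eta_2\gamma_k^{2-\tau}$ and $t_B=1-\eta_3$, with no restriction on $\gamma_kL_f$.

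In effect the paper runs a perturbed projected-gradient analysis: it would go through unchanged with $x_{k+1/2}$ replaced by $x_k$. It pays for ignoring the extragradient structure with $B$ and $\sum_k\gamma_k^2<\infty$. Your Korpelevich cancellation instead produces the descent term $-(1-\gamma_kL_f)\bigl(\|x_k-x_{k+1/2}\|^2+\|x_{k+1/2}-x_{k+1}\|^2\bigr)$ from the algorithm itself, at the price of needing $\gamma_kL_f<1$. The iterations where this fails are indeed harmless: since $\|x_{k+1/2}-x_{k+1}\|\le\gamma_kL_f\|x_k-x_{k+1/2}\|$ and $\|x_k-x_{k+1/2}\|\le\gamma_k(B+L_\theta\|\theta_k-\theta^*\|)$, you can fold them into $t_C\gamma_k^2$ and $t_{D,k}$.

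In exchange, for large $k$ you could absorb the leftover $2\gamma_k^{2-\tau}\|x_{k+1/2}-x_k\|^2$ from Step~3 into that descent term rather than invoking $B$. The only genuinely new error is then $\gamma_k^{\tau}L_\theta^2\|\theta_k-\theta^*\|^2$. Both routes deliver the same structure that the later propositions use: $t_{A,k}$ summable and $t_{D,k}=O(\gamma_k^{\tau}+\gamma_k^2)$.

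One small correction: the sentence about the contrapositive in your first paragraph is garbled. The contrapositive of pseudoconvexity at $x$ gives $\nabla f(x,\theta^*)^T(x^*-x)<0$ whenever $f(x,\theta^*)>f(x^*,\theta^*)$. The tie case needs convexity of the solution set, which holds because pseudoconvex functions are quasiconvex. Your fallback to pseudomonotonicity of the gradient is the correct justification.
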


\begin{proof}
    We start with defining $y_k = x_k-\gamma_k \nabla f(x_{k+1/2}, \theta_k)$. Then, it can be observed that
    \begin{align} \label{eq:x_k_transform}
    ||x_{k+1}-x^* ||^2  &= ||\Pi_{X}(y_k)-y_k + y_k-x^*||^2 \nonumber \\
     &= ||\Pi_{X}(y_k)-y_k ||^2 + ||y_k-x^* ||^2 + 2(\Pi_{X}(y_k)-y_k)^T \left(y_k-x^*\right) 
     \end{align}   
     
     Based on Lemma $4$ in~\cite{kannan2019optimal}, it holds that
     \begin{align*}
     &\quad 2||\Pi_{X}(y_k)-y_k ||^2 + 2(\Pi_{X}(y_k)-y_k)^T \left(y_k-x^*\right) \\
     &=2||\Pi_{X}(y_k)-y_k ||^2 + 2(\Pi_{X}(y_k)-y_k)^T \left(y_k - \Pi_{X}(y_k)+ \Pi_{X}(y_k) - x^*\right) \\
     &=2||\Pi_{X}(y_k)-y_k ||^2 - 2||\Pi_{X}(y_k)-y_k ||^2 + 2(\Pi_{X}(y_k)-y_k)^T \left(\Pi_{X}(y_k) - x^*\right) \\
     &=2(\Pi_{X}(y_k)-y_k)^T \left(\Pi_{X}(y_k) - x^*\right) \leq 0,\\
     \text{which is equivalent to } &||\Pi_{X}(y_k)-y_k ||^2 + 2(\Pi_{X}(y_k)-y_k)^T \left(y_k-x^*\right) \leq -||\Pi_{X}(y_k)-y_k ||^2.
     \end{align*}

    Hence, (\ref{eq:x_k_transform}) becomes
     \begin{align}
     ||x_{k+1}-x^* ||^2 &\leq ||y_k-x^* ||^2-||y_k-\Pi_{X}(y_k) ||^2 \nonumber\\
     & = ||x_k-\gamma_k \nabla f(x_{k+1/2},\theta_k)-x^* ||^2 - ||x_k-\gamma_k \nabla f(x_{k+1/2},\theta_k) -x_{k+1}||^2 \nonumber \\
     & = ||x_k-x^*||^2 + \gamma_k^2 ||\nabla f(x_{k+1/2},\theta_k) ||^2 -2 \gamma_k (x_k-x^*)^T \nabla f(x_{k+1/2},\theta_k) \nonumber \\
     & \hspace{3mm} -||x_k- x_{k+1} ||^2 - \gamma_k^2 ||\nabla f(x_{k+1/2},\theta_k) ||^2 + 2\gamma_k (x_k-x_{k+1})^T \nabla f(x_{k+1/2},\theta_k) \nonumber \\
    &  = ||x_k-x^* ||^2 -||x_k-x_{k+1} ||^2 + 2\gamma_k (x^*-x_{k+1})^T \nabla f(x_{k+1/2},\theta_k). \label{ieq:x_k_x^*_convergent_1}
    \end{align}
For the last term, we introduce and subtract $\gamma_k x_k^{T}\nabla f(x_{k+1/2}, \theta_k)$, which yields
    \begin{align*}
    &\gamma_k (x^*-x_{k+1})^T \nabla f(x_{k+1/2},\theta_k) \\
    & \hspace{5mm} = \gamma_k (x^*-x_k)^T \nabla f(x_{k+1/2},\theta_k) + \gamma_k (x_k-x_{k+1})^T \nabla f(x_{k+1/2},\theta_k) \\
    & \hspace{5mm} = \gamma_k (x^*-x_k)^T \nabla f(x_{k+1/2},\theta^*)+\gamma_k (x^*-x_k)^T \left[ \nabla f(x_{k+1/2},\theta_k)-\nabla f(x_{k+1/2},\theta^*)\right] \\ 
    & \hspace{5mm} \hspace{2mm} + \gamma_k (x_k-x_{k+1})^T \nabla f(x_{k+1/2},\theta_k) \\ 
    & \hspace{5mm} = \underbrace{\gamma_k (x^*-x_k)^T \nabla f(x_k,\theta^*)}_{u_k} +  \gamma_k (x^*-x_k)^T \left[ \nabla f(x_{k+1/2},\theta^*) - \nabla f(x_k,\theta^*)\right] \\ 
    & \hspace{5mm} \hspace{2mm} + \gamma_k (x^*-x_k)^T \left[ \nabla f(x_{k+1/2},\theta_k) - \nabla f(x_{k+1/2},\theta^*)\right] + \gamma_k (x_k-x_{k+1})^T \nabla f(x_{k+1/2},\theta_k). 
    \end{align*}
    Note that $x^* = \argmin_x \hspace{1mm} f(x,\theta^*)$, which implies $u_k \leq 0$ for all $k$.
    For all constants $\eta_1, \eta_2, \eta_3, \tau >0$, (\ref{ieq:x_k_x^*_convergent_1}) further gives that
    \begin{align*}
    ||x_{k+1}-x^* ||^2 & \leq  ||x_k-x^* ||^2-||x_k-x_{k+1} ||^2 + 2 u_k + 2\gamma_k (x^*-x_k)^T \left[ \nabla f(x_{k+1/2},\theta^*) - \nabla f(x_k,\theta^*)\right] \\
    &\hspace{3mm} + 2\gamma_k (x^*-x_k)^T \left[ \nabla f(x_{k+1/2},\theta_k) - \nabla f(x_{k+1/2},\theta^*)\right] + 2\gamma_k (x_k-x_{k+1})^T \nabla f(x_{k+1/2},\theta_k) \\
    & \leq ||x_k-x^* ||^2-||x_k-x_{k+1} ||^2 + 2 u_k \\
    & \hspace{3mm} + \eta_1 \gamma_k^2 ||x_k-x^* ||^2 + \frac{1}{\eta_1} ||\nabla f(x_{k+1/2},\theta^*) -\nabla f(x_k,\theta^*)||^2  \hspace{10mm} \text{(by Young's inequality)} \\
    &\hspace{3mm} + \eta_2 \gamma_k^{2-\tau} ||x_k-x^* ||^2 + \frac{\gamma_k^{\tau}}{\eta_2} ||\nabla f(x_{k+1/2}, \theta_k) -\nabla f(x_{k+1/2},\theta^*)||^2 \\ 
    & \hspace{3mm} + \eta_3 ||x_k-x_{k+1} ||^2 + \frac{\gamma_k^2}{\eta_3} ||\nabla f(x_{k+1/2},\theta_k) ||^2 \\
    & \leq \left(1 + \eta_1 \gamma_k^2 + \eta_2 \gamma_k^{2-\tau} \right)||x_k-x^* ||^2 - ( 1-\eta_3)||x_k-x_{k+1} ||^2 + 2 u_k \\ 
    & \hspace{3mm} + \frac{L_f^2}{\eta_1} ||x_{k+1/2}-x_k ||^2 + \frac{L_{\theta}^2 \gamma_k^{\tau}}{\eta_2} ||\theta_k-\theta^* ||^2 + \frac{\gamma_k^2}{\eta_3}||\nabla f(x_{k+1/2},\theta_k) ||^2.
    \end{align*}

    Next, we examine term-by-term as follows. 
    \begin{align*}
    ||x_{k+1/2}-x_k ||^2 & = ||\Pi_{X}(x_{k}-\gamma_k \nabla f(x_k, \theta_k)) -\Pi_{X}(x_k)||^2 \\ 
    & \leq \gamma_k^2 || \nabla f(x_k,\theta_k) ||^2 \\
    & = \gamma_k^2 ||\nabla f(x_k,\theta^*) + \nabla f(x_k,\theta_k)-\nabla f(x_k,\theta^*) ||^2 \\ 
    &= \gamma_k^2 \left(||\nabla f(x_k,\theta^*) ||^2 + ||\nabla f(x_k,\theta_k)-\nabla f(x_k,\theta^*) ||^2 + 2 \nabla f(x_k,\theta^*)^T \left(\nabla f(x_k,\theta_k)-\nabla f(x_k,\theta^*) \right) \right) \\
    &\leq \gamma_k^2 \left(2||\nabla f(x_k,\theta^*) ||^2 + 2||\nabla f(x_k,\theta_k)-\nabla f(x_k,\theta^*) ||^2 \right)\\
    & \leq \gamma_k^2 \left( 2 B^2 + 2 L_{\theta}^2 ||\theta_k-\theta^* ||^2 \right).
    \end{align*}
    Similarly, 
    \begin{align*}
    || \nabla f(x_{k+1/2},\theta_k) ||^2 &\leq 2|| \nabla f(x_{k+1/2}, \theta^*)||^2 + 2|| \nabla f(x_{k+1/2},\theta_k) - \nabla f(x_{k+1/2}, \theta^*)||^2 \\
    &\leq  2 B^2 + 2L_{\theta}^2 ||\theta_k-\theta^* ||^2.
    \end{align*}
    This further leads to the following expression. 
    \begin{align} \label{eq:xk_bounded}
    ||x_{k+1}-x^* ||^2 & \leq \left( 1+ \eta_1 \gamma_k^2 + \eta_2 \gamma_k^{2-\tau} \right) ||x_k-x^* ||^2 - \left( 1-\eta_3\right) ||x_{k}-x_{k+1} ||^2 + 2u_k  \nonumber \\ 
    &\hspace{3mm} + \frac{2L_{f}^2 \gamma_k^2}{\eta_1} \left(B^2 + L_{\theta}^2 ||\theta_k -\theta^*||^2 \right) + \frac{L_{\theta}^2 \gamma_k^{\tau}}{\eta_2} ||\theta_k-\theta^* ||^2 + \frac{2\gamma_k^2}{\eta_3} \left(B^2 + L_{\theta}^2 ||\theta_k-\theta^* ||^2 \right) \nonumber \\
    & = \left( 1+ \underbrace{\eta_1 \gamma_k^2 + \eta_2 \gamma_k^{2-\tau} }_{t_{A,k}}\right) ||x_k-x^* ||^2 - \underbrace{\left( 1-\eta_3\right)}_{t_B} ||x_{k}-x_{k+1} ||^2 +2u_k \nonumber\\ 
    &\hspace{3mm} + \gamma_k^2 \underbrace{ \left( \frac{2L_f^2 B^2}{\eta_1} + \frac{2B^2}{\eta_3} \right)}_{t_{C}} + \underbrace{\left( \frac{2L_f^2 \gamma_k^2 L_{\theta}^2}{\eta_1} +\frac{L_{\theta}^2 \gamma_k^{\tau}}{\eta_2} + \frac{2\gamma_k^2L_{\theta}^2}{\eta_3} \right)}_{t_{D,k}} ||\theta_k-\theta^* ||^2. 
    \end{align}
Here, $\eta_3>0$ may be chosen accordingly to ensure that $t_B>0$ while simultaneously tightening the overall bound.
\end{proof}


\begin{lemma} \label{lemma:convergence_xk_theta_k}
Consider the iterates generated by~(\ref{alg:jointalg}). Let assumptions~\ref{assumption:A1steps} and~\ref{assumption:objs} hold. Then, the following hold for any iteration $k$. 
\begin{align*}
||x_{k+1}-x^* ||^2+||\theta_{k+1}-\theta^* ||^2 
&\leq (1+t_{\text{max}}) \left( ||x_k-x^* ||^2 + ||\theta_k-\theta^* ||^2 \right)  \\
&\hspace{3mm} - t_{B} ||x_k-x_{k+1} ||^2 + 2u_k + t_C \gamma_k^2 + \beta_k^2 ||w_k||^2  \\
&\hspace{3mm} + 2\beta_k^2 (w_k)^T \left( \nabla h(\theta_k)-\nabla h(\theta^*) \right) - 2\beta_k (w_k)^T \left( \theta_k - \theta^* \right), 
\end{align*}
where $t_{\text{max}} = \max\{t_{A,k},t_{E,k}\}$, $t_{B}>0$, $t_{C}>0$, and $u_k \leq 0$ are appropriately defined finite scalars.
\end{lemma}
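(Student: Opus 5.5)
The plan is to add the bound of Lemma~\ref{lemm:xiter} for $\|x_{k+1}-x^*\|^2$ to a separate one-step bound for the learning iterate $\|\theta_{k+1}-\theta^*\|^2$, and then merge the coefficients. Lemma~\ref{lemm:xiter} already supplies
\[
\|x_{k+1}-x^*\|^2 \le (1+t_{A,k})\|x_k-x^*\|^2 - t_B\|x_k-x_{k+1}\|^2 + 2u_k + t_C\gamma_k^2 + t_{D,k}\|\theta_k-\theta^*\|^2,
\]
so the remaining work concerns $\theta$ alone.

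For the $\theta$-update we use that $\theta^*$ minimizes $h$ over $\Theta$. The optimality condition for this convex problem gives the fixed-point identity $\theta^* = \Pi_\Theta(\theta^* - \beta_k\nabla h(\theta^*))$. Nonexpansiveness of $\Pi_\Theta$ then yields
\[
\|\theta_{k+1}-\theta^*\|^2 \le \|\theta_k-\theta^* - \beta_k(\nabla h(\theta_k)-\nabla h(\theta^*)) - \beta_k w_k\|^2 .
\]
Expanding the square produces six terms:
\begin{itemize}
\item $\|\theta_k-\theta^*\|^2$;
\item $-2\beta_k(\theta_k-\theta^*)^T(\nabla h(\theta_k)-\nabla h(\theta^*))$, which is at most $-2\beta_k\mu_h\|\theta_k-\theta^*\|^2$ by Assumption~\ref{assumption:objs}(b);
\item $\beta_k^2\|\nabla h(\theta_k)-\nabla h(\theta^*)\|^2$, which is at most $\beta_k^2L_h^2\|\theta_k-\theta^*\|^2$ by Assumption~\ref{assumption:objs}(c);
\item $\beta_k^2\|w_k\|^2$;
\item $2\beta_k^2 w_k^T(\nabla h(\theta_k)-\nabla h(\theta^*))$;
\item $-2\beta_k w_k^T(\theta_k-\theta^*)$.
\end{itemize}
The last three terms match exactly the noise terms in the statement, so we leave them unbounded. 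Collecting the rest gives
\[
\|\theta_{k+1}-\theta^*\|^2 \le (1-2\beta_k\mu_h+\beta_k^2L_h^2)\|\theta_k-\theta^*\|^2 + \beta_k^2\|w_k\|^2 + 2\beta_k^2w_k^T(\nabla h(\theta_k)-\nabla h(\theta^*)) - 2\beta_k w_k^T(\theta_k-\theta^*).
\]
Assumption~\ref{assumption:A1steps}(b) imposes $\beta_k\le 2\mu_h/L_h^2$, which makes $1-2\beta_k\mu_h+\beta_k^2L_h^2\le 1$, so this coefficient is nonexpansive.

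Adding the two inequalities, the coefficient of $\|\theta_k-\theta^*\|^2$ becomes $1 - 2\beta_k\mu_h + \beta_k^2L_h^2 + t_{D,k}$. We define $t_{E,k}$ by
\[
1 + t_{E,k} := 1 + t_{D,k} - 2\beta_k\mu_h + \beta_k^2L_h^2 .
\]
If this quantity is negative, we replace it by its positive part, or simply drop the contraction and take $t_{E,k}=t_{D,k}+\beta_k^2L_h^2$; either choice is nonnegative. The coefficient of $\|x_k-x^*\|^2$ is $1+t_{A,k}$. Bounding both coefficients by $1+t_{\max}$ with $t_{\max}=\max\{t_{A,k},t_{E,k}\}$ gives the stated inequality, with $t_B$, $t_C$ and $u_k\le 0$ inherited unchanged from Lemma~\ref{lemm:xiter}.

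The only delicate point is the bookkeeping behind $t_{E,k}$. It has to be finite and nonnegative so that the max is well defined. It should also retain the $-2\beta_k\mu_h$ contraction where possible, because Assumption~\ref{assumption:A1steps}(c), $\gamma_k^\tau/\beta_k\to 0$, is clearly meant to let that contraction dominate the $t_{D,k}\sim\gamma_k^\tau$ term later. For this lemma, however, any nonnegative bounded definition suffices. A second check is that the projection fixed-point identity for $\theta^*$ holds with an arbitrary step $\beta_k>0$; this follows from the variational inequality characterization of $\theta^*$ over the convex set $\Theta$.
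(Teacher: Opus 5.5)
Your proof is correct, and its analytic core matches the paper's. Both use the fixed-point identity $\theta^*=\Pi_\Theta(\theta^*-\beta_k\nabla h(\theta^*))$, nonexpansiveness of $\Pi_\Theta$, expansion of the square with strong convexity and the Lipschitz bound, and addition of Lemma~\ref{lemm:xiter}.

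The one genuine difference is how you package the coefficient $1+t_{D,k}-2\beta_k\mu_h+\beta_k^2L_h^2$ of $\|\theta_k-\theta^*\|^2$. The paper writes it as $1+t_{E,k}+t_{F,k}$, with $t_{E,k}=\gamma_k^2\bigl(2L_f^2L_\theta^2/\eta_1+2L_\theta^2/\eta_3\bigr)$ and $t_{F,k}=L_\theta^2\gamma_k^\tau/\eta_2-2\beta_k\mu_h+\beta_k^2L_h^2$. It keeps $t_{F,k}\|\theta_k-\theta^*\|^2$ as an extra term and shows, via $\gamma_k^\tau/\beta_k\to0$, that $t_{F,k}\le-\mu_h\beta_k<0$ for $k\ge K$, which is what lets that term be dropped. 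So, strictly, the paper obtains the stated inequality only for $k\ge K$. In exchange, its $t_{E,k}=O(\gamma_k^2)$ is immediately summable.

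Your positive-part choice $t_{E,k}=(t_{D,k}-2\beta_k\mu_h+\beta_k^2L_h^2)_+$ gives the inequality for every $k$. It is still summable, because it is zero for all large $k$: $\gamma_k^2/\beta_k=\gamma_k^{2-\tau}\cdot\gamma_k^\tau/\beta_k\to0$, so the $O(\gamma_k^2)$ part is eventually dominated by $-\mu_h\beta_k$. You should state this explicitly. Summability of $t_{\max}$ is exactly what Proposition~\ref{prop:xk_theta_k_convergence_as} takes from this lemma when it invokes Robbins--Siegmund.

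For the same reason, drop your fallback $t_{E,k}=t_{D,k}+\beta_k^2L_h^2$. It contains $L_\theta^2\gamma_k^\tau/\eta_2$, which is not summable for the paper's own stepsizes ($\gamma_k=\gamma_0/(k+1)$, $\tau=0.75$). ``Finite and nonnegative'' is therefore the wrong criterion for $t_{E,k}$; summable is the right one.
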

\begin{proof}
    Note that $\theta^* = \prod_{\Theta} (\theta^* - \beta_k \nabla h(\theta^*))$. Analyzing the consecutive iterates in the space of $\theta$ by~(\ref{alg:jointalg}), we have the following
\begin{align*}
||\theta_{k+1}- \theta^*||^2 & \leq ||\theta_k - \theta^* - \beta_k \left( \nabla h(\theta_k)-\nabla h(\theta^*)\right) - \beta_k w_k ||^2 \\ 
& = ||\theta_k-\theta^* ||^2 + \beta_k^2 || \nabla h(\theta_k)-\nabla h(\theta^*) ||^2 +\beta_k^2 ||w_k||^2 -2\beta_k \left( \nabla h(\theta_k) - \nabla h(\theta^*) \right)^T \left(\theta_k - \theta^* \right)\\ 
&\hspace{3mm} + 2\beta_k^2 (w_k)^T \left( \nabla h(\theta_k)-\nabla h(\theta^*) \right) - 2\beta_k (w_k)^T \left( \theta_k - \theta^* \right).
\end{align*}
Based on Assumption~\ref{assumption:objs}, it is easy to obtain
\begin{align*}
||\theta_{k+1}- \theta^*||^2 & \leq ||\theta_k-\theta^* ||^2 + \beta_k^2 L_h^2 ||\theta_k-\theta^* ||^2 +\beta_k^2 ||w_k||^2 - 2\beta_k \mu_h ||\theta_k-\theta^* ||^2  \\ 
&\hspace{3mm} + 2\beta_k^2 (w_k)^T \left( \nabla h(\theta_k)-\nabla h(\theta^*) \right) - 2\beta_k (w_k)^T \left( \theta_k - \theta^* \right)\\
& = \left(1-2\beta_k\mu_h + \beta_k^2 L_h^2 \right) ||\theta_k-\theta^* ||^2 +\beta_k^2 ||w_k||^2  \\
&\hspace{3mm} + 2\beta_k^2 (w_k)^T \left( \nabla h(\theta_k)-\nabla h(\theta^*) \right) - 2\beta_k (w_k)^T \left( \theta_k - \theta^* \right).
\end{align*}

Combining the above with (\ref{eq:xk_bounded}), we have the following. 
\begin{align} \label{eq:xk_theta_k_bounded_1}
||x_{k+1}-x^* ||^2 + ||\theta_{k+1}-\theta^* ||^2 & \leq \left( 1+ t_{A,k} \right) ||x_k-x^* ||^2 - t_B ||x_k-x_{k+1} ||^2 +2 u_k \nonumber\\ 
&\hspace{3mm}  + t_C \gamma_k^2 + \left( t_{D,k} + 1 -2\beta_k \mu_h + \beta_k^2 L_h^2 \right) ||\theta_k-\theta^* ||^2 +\beta_k^2 ||w_k||^2 \nonumber \\ 
&\hspace{3mm} + 2\beta_k^2 (w_k)^T \left( \nabla h(\theta_k)-\nabla h(\theta^*) \right) - 2\beta_k (w_k)^T \left( \theta_k - \theta^* \right).
\end{align}
Note that
\begin{align*}
t_{D,k} + 1 - 2\beta_k \mu_h + \beta_k^2 L_h^2  &= \frac{2 L_f^2 \gamma_k^2 L_{\theta}^2}{\eta_1} +\frac{L_{\theta}^2 \gamma_k^{\tau}}{\eta_2} + \frac{2\gamma_k^2L_{\theta}^2}{\eta_3} + 1 - 2\beta_k \mu_h + \beta_k^2 L_h^2 \\
&= 1 + \underbrace{\gamma_k^2 \left( \frac{2L_f^2 L_{\theta}^2}{\eta_1} + \frac{2L_{\theta}^2}{\eta_3} \right)}_{t_{E,k}} + \underbrace{\frac{L_{\theta}^2 \gamma_k^{\tau}}{\eta_2} - 2\beta_k \mu_h +\beta_k^2 L_h^2}_{t_{F,k}}.
\end{align*}
On re-arrangement, (\ref{eq:xk_theta_k_bounded_1}) is equivalent to the following. 
\begin{align} \label{eq:xk_theta_k_bounded_2}
||x_{k+1}-x^* ||^2 + ||\theta_{k+1}-\theta^* ||^2 & \leq  \left( 1+ t_{A,k} \right) ||x_k-x^* ||^2 - t_B ||x_k-x_{k+1} ||^2 + 2 u_k \nonumber\\ 
&\hspace{3mm} + t_C \gamma_k^2 + \left(1+ t_{E,k} +t_{F,k}\right) ||\theta_k-\theta^* ||^2 +\beta_k^2 ||w_k||^2  \nonumber\\
&\hspace{3mm} + 2\beta_k^2 (w_k)^T \left( \nabla h(\theta_k)-\nabla h(\theta^*) \right) - 2\beta_k (w_k)^T \left( \theta_k - \theta^* \right) \nonumber\\
&\leq \left( 1+ \max\{t_{A,k}, t_{E,k}\} \right) \left( ||x_{k}-x^* ||^2 + ||\theta_{k}-\theta^* ||^2 \right) - t_B ||x_k-x_{k+1} ||^2 \nonumber\\
&\hspace{3mm} + 2 u_k + t_C \gamma_k^2 + t_{F,k} ||\theta_k-\theta^* ||^2 +\beta_k^2 ||w_k||^2 \nonumber\\
&\hspace{3mm} + 2\beta_k^2 (w_k)^T \left( \nabla h(\theta_k)-\nabla h(\theta^*) \right) - 2\beta_k (w_k)^T \left( \theta_k - \theta^* \right).
\end{align}

For the term $t_{F,k}$, we know $\lim_{k\to \infty} \beta_k=0$ and $\lim_{k\to \infty} \frac{\gamma_k^{\tau}}{\beta_k} = 0$ by Assumption~\ref{assumption:A1steps}. Then, 
\begin{align*}
    \lim_{k\to \infty} \left(\frac{t_{F,k}}{\beta_k} \right)
        = \frac{L_{\theta}^2}{\eta_2} \lim_{k\to \infty} \frac{ \gamma_k^{\tau}}{\beta_k} - 2 \mu_h + L_h^2 \lim_{k\to \infty} \beta_k =-2\mu_h < 0.
\end{align*}
Hence, there exists $K$ such that for all $k \geq K$, one has $\frac{t_{F,k}}{\beta_k} \leq -\mu_h$. Since $\beta_k >0$ for all $k$, it has
\begin{align*}
    t_{F,k} \leq -\mu_h \beta_k <0.
\end{align*}
In other words, $t_{F,k} < 0$ for all $k \geq K$. 
\end{proof}
\begin{proposition} \label{prop:xk_theta_k_convergence_as}
    Given $x_k,\theta_k$ and $\mathcal{F}_k$. Let sequences $\{x_k\}$ and $\{\theta_k\}$ be generated by (\ref{alg:jointalg}). Then the sequence $\{x_{k+1}\}$ converges almost surely to a point $x^*$ that satisfies the first-order (KKT) stationarity conditions for the problem $\min_{x \in X} f(x, \theta^*)$
\end{proposition}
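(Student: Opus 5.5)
Starting from the composite recursion of Lemma~\ref{lemma:convergence_xk_theta_k}, set $V_k = \|x_k-x^*\|^2+\|\theta_k-\theta^*\|^2$ and take conditional expectations with respect to $\mathcal{F}_k$. By Assumption~\ref{assumption:first_second_moment}, the cross terms $2\beta_k^2 w_k^T(\nabla h(\theta_k)-\nabla h(\theta^*))$ and $-2\beta_k w_k^T(\theta_k-\theta^*)$ have zero conditional mean, because $\theta_k$ is $\mathcal{F}_k$-measurable. The term $\beta_k^2\|w_k\|^2$ contributes at most $\beta_k^2\nu^2$. For $k\ge K$, Lemma~\ref{lemma:convergence_xk_theta_k} gives $t_{F,k}\le -\mu_h\beta_k\le 0$, so that term can be dropped or retained as an extra nonnegative decrement. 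We obtain
\begin{align*}
\mathbb{E}[V_{k+1}\mid\mathcal{F}_k] \le (1+t_{\max,k})V_k - \left(t_B\|x_k-x_{k+1}\|^2 - 2u_k + \mu_h\beta_k\|\theta_k-\theta^*\|^2\right) + t_C\gamma_k^2 + \nu^2\beta_k^2 .
\end{align*}
Since $-u_k\ge 0$, the bracket is nonnegative. We have $t_{\max,k}\le \eta_1\gamma_k^2+\eta_2\gamma_k^{2-\tau}+c\gamma_k^2$, which is summable by Assumption~\ref{assumption:A1steps}(a),(c). The terms $t_C\gamma_k^2+\nu^2\beta_k^2$ are summable by (a),(b).

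Next, apply the Robbins--Siegmund Lemma~\ref{lemma:convergence_as_2}, with the recursion started at $K$. It gives three conclusions almost surely: (i) $V_k\to V^*$; (ii) $\sum_k \|x_k-x_{k+1}\|^2<\infty$; and (iii) $\sum_k \gamma_k (x_k-x^*)^T\nabla f(x_k,\theta^*)>-\infty$, i.e.\ $\sum_k\gamma_k\,\nabla f(x_k,\theta^*)^T(x_k-x^*)<\infty$. In parallel, the $\theta$-recursion alone has the form $\mathbb{E}[\|\theta_{k+1}-\theta^*\|^2\mid\mathcal{F}_k]\le(1-2\mu_h\beta_k+L_h^2\beta_k^2)\|\theta_k-\theta^*\|^2+\nu^2\beta_k^2$. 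Since $\beta_k\le 2\mu_h/L_h^2$, Lemma~\ref{lemma:convergence_as_1} applies with $\delta_k=\beta_k(2\mu_h-L_h^2\beta_k)$ and $\xi_k=\nu^2\beta_k^2$, and yields $\theta_k\to\theta^*$ almost surely. Hence $\|x_k-x^*\|^2$ converges almost surely, and $\{x_k\}$ is bounded.

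The main obstacle is to upgrade these facts to convergence of $x_k$ to a single stationary point, since pseudoconvexity offers no strong monotonicity. We proceed as follows. Let $\phi(x)=\nabla f(x,\theta^*)^T(x-x^*)$. Pseudoconvexity contrapositively gives $\phi(x)>0$ whenever $f(x,\theta^*)>f(x^*,\theta^*)$; for the optimal point $x^*$ we also have $\phi\ge 0$ on $X$. From $\sum\gamma_k\phi(x_k)<\infty$ and $\sum\gamma_k=\infty$ we get $\liminf_k\phi(x_k)=0$. By boundedness, extract a subsequence $x_{k_j}\to\bar x$ with $\phi(\bar x)=0$. By pseudoconvexity (monotonicity of $\nabla f$ along segments), $\phi(\bar x)=0$ forces $f(\bar x,\theta^*)\le f(x^*,\theta^*)$, so $\bar x$ is itself a minimizer of $f(\cdot,\theta^*)$ over $X$.

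Finally, the whole argument above holds with $x^*$ replaced by any minimizer, in particular by $\bar x$. Therefore $\|x_k-\bar x\|$ converges almost surely. Since it tends to zero along the subsequence $k_j$, it converges to zero, so the entire sequence converges to $\bar x$. To handle the uncountable family of minimizers simultaneously, we would use a countable dense subset of the solution set. The limit minimizes the pseudoconvex $f(\cdot,\theta^*)$ over convex $X$, so it satisfies $\nabla f(\bar x,\theta^*)^T(x-\bar x)\ge0$ for all $x\in X$, i.e.\ the KKT conditions. One could alternatively pass to the limit in the projection fixed-point relation $x_{k+1/2}=\Pi_X(x_k-\gamma_k\nabla f(x_k,\theta_k))$ using $\|x_k-x_{k+1}\|\to0$, but the minimizer-based route above avoids the vanishing-stepsize degeneracy.
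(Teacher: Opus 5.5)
Your proof is correct, and it departs from the paper exactly where the paper's proof is incomplete. The paper uses the same recursion for $V_k=\|x_k-x^*\|^2+\|\theta_k-\theta^*\|^2$. It drops the $t_B$ and $t_{F,k}$ terms for $k\ge K$, invokes Robbins--Siegmund (Lemma~\ref{lemma:convergence_as_2}), and then asserts $\|x_k-x^*\|^2\to 0$. That lemma only gives $V_k\to V^*$ for some random $V^*\ge 0$ and $\sum_k(-u_k)<\infty$, so the paper never shows $V^*=0$. When the solution set is not a singleton, convergence to the prescribed reference point $x^*$ cannot be expected anyway.

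You extract more from the summable decrement, in four steps:
\begin{itemize}
\item $\sum_k\gamma_k\phi(x_k)<\infty$ together with $\sum_k\gamma_k=\infty$ gives $\liminf_k\phi(x_k)=0$.
\item You get $\theta_k\to\theta^*$ separately via Lemma~\ref{lemma:convergence_as_1}, so $\|x_k-x^*\|$ itself converges and $\{x_k\}$ is bounded.
\item Pseudoconvexity with $x_a=\bar x$, $x_b=x^*$ turns $\phi(\bar x)=0$ into optimality of the cluster point $\bar x$.
\item An Opial-type step upgrades subsequential convergence to convergence of the whole sequence to a possibly random minimizer. Intersecting the almost-sure events over a countable dense subset of the solution set makes this rigorous. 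This works because the recursion holds verbatim with $x^*$ replaced by any minimizer $z$: the reference point enters only through $u_k\le 0$, and $K$ does not depend on it.
\end{itemize}
Your route costs an extra cluster-point argument. In exchange it gives a complete proof and the correct form of the conclusion: convergence to \emph{some} solution satisfying the variational inequality (KKT) conditions, rather than to a fixed $x^*$.

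Three minor points:
\begin{itemize}
\item In (iii), the first expression should read $\sum_k\gamma_k(x^*-x_k)^T\nabla f(x_k,\theta^*)>-\infty$; as written it has a sign typo.
\item $\phi\ge 0$ on all of $X$, including at other minimizers where $\phi=0$, rests on pseudomonotonicity of $\nabla f(\cdot,\theta^*)$ (or convexity of the solution set). The paper uses this silently for $u_k\le 0$, and you should state it.
\item You implicitly need $X$ closed so that $\bar x\in X$.
\end{itemize}
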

\begin{proof}

    Taking conditional expectation with respect to $\mathcal{F}_k$, (\ref{eq:xk_theta_k_bounded_2}) follows
        \begin{align} \label{ieq:condi_expectation_xk_theta_k}
        \mathbb{E}\left[ ||x_{k+1}-x^* ||^2 + ||\theta_{k+1}-\theta^* ||^2 | \mathcal{F}_k \right] &\leq 
        (1+t_{\text{max}}) \mathbb{E}\left[ ||x_{k}-x^* ||^2+||\theta_k-\theta^* ||^2| \mathcal{F}_k \right] \nonumber \\
        &\hspace{3mm} - t_B \mathbb{E}\left[||x_k-x_{k+1} ||^2| \mathcal{F}_k \right] +2 u_k + t_C \gamma_k^2 + t_{F,k} ||\theta_k-\theta^* ||^2  \nonumber \\
        &\hspace{3mm}  +\beta_k^2 \nu^2 + 2\beta_k^2 \mathbb{E} \left[ (w_k)^T \left( \nabla h(\theta_k)-\nabla h(\theta^*) \right) | \mathcal{F}_k \right] \nonumber\\
        &\hspace{3mm} - 2\beta_k \mathbb{E} \left[ (w_k)^T \left( \theta_k - \theta^* \right) | \mathcal{F}_k \right],
    \end{align}
    where $t_{B}>0$, $t_{C}>0$, $u_k \leq 0$, $t_{\text{max}} = \max\{t_{A,k},t_{E,k}\}$, $t_{A,k} = \eta_1 \gamma_k^2+\eta_2 \gamma_k^{2-\tau}$, $t_{E,k}=\gamma_k^2 \left(\frac{2L_f^2 L_{\theta}^2}{\eta_1}+\frac{2L_{\theta}^2}{\eta_3} \right)$ and $t_{F,k}= \frac{L_{\theta}^2 \gamma_k^{\tau}}{\eta_2} - 2\beta_k \mu_h +\beta_k^2 L_h^2$. Specifically, if $t_{A,k}\geq t_{E,k}$, then 
    \begin{align*}
        \sum_{k=0}^{\infty} t_{\text{max}} = \sum_{k=0}^{\infty} t_{A,k}= \eta_1  \sum_{k=0}^{\infty} \gamma_k^2 + \eta_2 \sum_{k=0}^{\infty} \gamma_k^{2-\tau} <\infty;
    \end{align*}
    if $t_{E,k}> t_{A,k}$, then 
    \begin{align*}
        \sum_{k=0}^{\infty} t_{\text{max}}= \sum_{k=0}^{\infty} t_{E,k} =\left(\frac{2L_f^2 L_{\theta}^2}{\eta_1} + \frac{2L_{\theta}^2}{\eta_3} \right)\sum_{k=0}^{\infty} \gamma_k^2 <\infty.
    \end{align*}
    
    The last two terms in (\ref{ieq:condi_expectation_xk_theta_k}) are $0$ based on Assumption \ref{assumption:first_second_moment}. 
    As discussed at the end of Lemma~\ref{lemma:convergence_xk_theta_k}, for all $k \geq K$ with some $K$, it has $t_{F,k}\leq 0$. By rearranging (\ref{ieq:condi_expectation_xk_theta_k}), it gives 
    \begin{align*}
        \mathbb{E}\left[ ||x_{k+1}-x^* ||^2 + ||\theta_{k+1}-\theta^* ||^2 | \mathcal{F}_k \right] &\leq 
        (1+t_{\text{max}}) \mathbb{E}\left[ ||x_{k}-x^* ||^2+||\theta_k-\theta^* ||^2| \mathcal{F}_k \right] \nonumber \\
        &\hspace{3mm} - t_B \mathbb{E}\left[||x_k-x_{k+1} ||^2| \mathcal{F}_k \right] +2 u_k + t_{F,k} ||\theta_k-\theta^* ||^2 + t_C \gamma_k^2 +\beta_k^2 \nu^2 \nonumber \\
        &\leq (1+t_{\text{max}}) (||x_{k}-x^* ||^2+||\theta_k-\theta^* ||^2) +2 u_k + t_C \gamma_k^2 +\beta_k^2 \nu^2.
    \end{align*}
    Combining $\sum_{k=0}^{\infty} t_{\text{max}} < \infty$, $\sum_{k=0}^{\infty} \gamma_k^2 <\infty$, $\sum_{k=0}^{\infty} \beta_k^2 <\infty$ and Lemma \ref{lemma:convergence_as_2}, we conclude that $||x_k - x^*||^2\to 0$ almost surely as $k\to \infty$. 
\end{proof}

\subsection{Multi-Step Learning–Optimization Framework}

We will generalize the single-step joint update rule introduced in (\ref{alg:jointalg}) to a multi-step learning–optimization framework. This generalization allows multiple steps in both the learning and optimization phases. The ``multi-step" capability is important for balancing the convergence rates of the two levels, as learning tasks are often more time-consuming than decision tasks.

Formally, let $Q \geq 1$ and $R \geq 1$ denote the number of iteration counts for the outer-level variable $x$ and the inner-level variable $\theta$, respectively, within a single global iteration $k$. Note that $Q$ and $R$ can be different. We define the multi-step update rules as follows:
\begin{align} \label{formu:fusion_general}
x_{k}^{j+1/2} & = \Pi_{X} \left(x_k^{j}-\gamma_k \nabla f(x_k^j,\theta_k) \right), \quad j = 0....Q,   \\ 
x_{k}^{j+1} & = \Pi_{X} \left(x_k^j -\gamma_k \nabla f(x_{k}^{j+1/2},\theta_k) \right), \nonumber \\ 
\theta_{k}^{i+1} & = \Pi_{\Theta}(\theta_k^i-\beta_k (\nabla h(\theta_k^i)+\omega_{k}^i)),  \quad i = 0,..R, \nonumber\\
x_{k+1} & = x_{k+1}^{0} = x_k^Q, \nonumber\\
\theta_{k+1} & = \theta_{k+1}^{0} = \theta_k^R. \nonumber 
\end{align}
Here, the last two equations in (\ref{formu:fusion_general}) represent the starting states at iteration $k+1$. The sequences $\gamma_k$ and $\beta_k$ are the step sizes for the optimization and learning updates, respectively, and $w_k^i$ represents the stochastic noise in the learning gradient.

A convenient choice that satisfies Assumption~\ref{assumption:A1steps} is given by polynomial stepsize sequences
\begin{align*}
    \gamma_k = \frac{\gamma_0}{(k+1)^a}, \quad \beta_k = \frac{\beta_0}{(k+1)^b}, \quad \gamma_0, \beta_0 >0,
\end{align*}
where the parameters $a,b$ are selected as follows. The conditions on $\{\gamma_k\}$ are satisfied 
\begin{align*}
    0.5 < a \leq 1 \quad \text{and} \quad (2-\tau)a >1,
\end{align*}
while the conditions on $\{\beta_k\}$ require $0.5 < b \leq 1$. The limit condition $\frac{\gamma_k^{\tau}}{\beta_k} \to 0$ implies that $a\tau >b$. That is, given $\tau \in (0, 1)$ by Assumption~\ref{assumption:A1steps}, we need to make sure $(a, b, \tau)$ satisfies
\begin{align*}
    0.5 < b < a\tau < a \leq 1.
\end{align*}
A concrete example that is used in Algorithm~\ref{alg:fusion_alg} is $(a, b, \tau)=(1, 0.6, 0.75)$, i.e.,
\begin{align*}
    \gamma_k = \frac{\gamma_0}{k+1}, \quad \beta_k = \frac{\beta_0}{(k+1)^{0.6}}, \quad \gamma_0, \beta_0 >0.
\end{align*}

Algorithm~\ref{alg:fusion_alg} provides a pseudocode implementation of this multi-step learning–optimization algorithm. The algorithm explicitly defines the nested loop structure, where the outer-level loop executes $Q$ times to update the decision variables, and the inner-level loop performs $R$ times to update the learning parameters.

\begin{algorithm}[h]
\caption{Multi-Step Learning–Optimization Algorithm (MSLO)}
\label{alg:fusion_alg}
\textbf{Input:} Initial outer-level variable $x_0$, initial inner-level variable $\theta_{0}$, initial step sizes $\gamma_0$ and $\beta_0$, initial counts for outer and inner loop $Q$ and $R$, and stochastic noise $w_k$. \\
\textbf{Output:} Solutions $x_{k}$ and $\theta_{k}$.

\begin{algorithmic}[1]
\State Set the initial iterations $k=q=r=0$.
\While{a stopping criterion has not been met} 
 \State $\gamma_k \gets \frac{\gamma_0}{k+1}$
\State $\beta_k \gets \frac{\beta_0}{(k+1)^{0.6}}$
\While{$q<Q$} \label{alg:outer}  \Comment{Outer-level loop}
  \State $x_{k+1/2} \gets \Pi_{X} \left(x_k - \gamma_k \nabla f(x_k, \theta_k) \right)$ 
  \State $x_{k+1} \gets \Pi_{X} \left(x_k -\gamma_k \nabla f(x_{k+1/2}, \theta_k) \right)$
  \State $x_k \gets x_{k+1}$
  \State $q \gets q+1$
\EndWhile
\State Reset $q=0$
\While{$r<R$}  \Comment{Inner-level loop}
  \State $\theta_{k+1} \gets \Pi_{\Theta} \left(\theta_k -\beta_k \nabla h(\theta_{k} + w_k) \right)$
  \State $\theta_k \gets \theta_{k+1}$
  \State $r \gets r+1$
\EndWhile
\State Reset $r=0$
\State $k \gets k+1$
\EndWhile
\end{algorithmic}
\end{algorithm}

\subsubsection{Convergence Analysis for the Multi-Step Learning–Optimization Framework}

    We introduce two nested sequences of filtrations $\{\mathcal{F}_k\}_{k \geq 0}$ and $\{\mathcal{M}_k^i\}_{i=1}^R$ by
    \begin{align*}
        \mathcal{F}_k &:= \sigma\left( \{x_l, \theta_l\}_{l=0}^k \right) = \sigma(x_0, \theta_0, x_1, \theta_1, \dots, x_k, \theta_k), \\
        \mathcal{M}_k^i &:= \sigma\left( \mathcal{F}_k \cup \{w_k^{j-1}, \theta_k^j\}_{j=1}^i \right) = \sigma(\mathcal{F}_k, w_k^0, \theta_k^1, w_k^1, \dots, w_k^{i-1}, \theta_k^i).
    \end{align*}
    Given $\mathcal{F}_k$, the sequence $\{x_k^j\}_{j=0}^Q$ is generated deterministically by (\ref{formu:fusion_general}), which implies $\{x_k^j\}_{j=0}^Q \in \mathcal{F}_k$. Within the $k$-th global iteration, the stochastic components $\{w_k^{i-1}\}_{i=1}^{R}$ are captured by the filtration chain
    \begin{align*}
        \mathcal{F}_k \subseteq \mathcal{M}_k^0 \subseteq \mathcal{M}_k^1 \subseteq \dots \subseteq \mathcal{M}_k^R \subseteq \mathcal{F}_{k+1},
    \end{align*}
    which implies that the $i$-th inner iterate $\theta_k^i$ and its associated noise term satisfy $\theta_k^i \in \mathcal{M}_k^i$ and $w_k^{i-1} \in \mathcal{M}_k^i$ for all $i \in \{1, \dots, R\}$.
Moreover, it holds that
    \begin{align} \label{eq:w_k_condition_general}
        \mathbb{E} \left[w_k^i | \mathcal{F}_k \right] &= \mathbb{E}\left[ \mathbb{E}\left[ w_k^i | \mathcal{M}_{k}^i \right] | \mathcal{F}_{k} \right]= 0, \\
        \mathbb{E}\left[ ||w_k^i||^2 | \mathcal{F}_{k} \right] &= \mathbb{E}\left[ \mathbb{E}\left[ ||w_k^i||^2 | \mathcal{M}_{k}^i \right] | \mathcal{F}_{k} \right] \leq \mathbb{E}[\nu^2 | \mathcal{F}_k] = \nu^2. \nonumber
    \end{align}

\begin{proposition} \label{prop:theta_convergence_as}
    Given $\theta_k$ and $\mathcal{F}_k$. Let $R\geq 1$ be an integer. Then after $R$ times inner-level iterations, it holds $\theta_{k+1} = \theta_k^R$, and the sequence $\{\theta_{k+1}\} \to \theta^*$ almost surely, where $\theta^* = \argmin_{\theta \in \Theta} \hspace{1mm} h(\theta)$.
\end{proposition}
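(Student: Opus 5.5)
We prove a one-step contraction for a single inner iteration, chain it over the $R$ inner steps, and then apply Lemma~\ref{lemma:convergence_as_1} to the outer sequence $V_k := \|\theta_k-\theta^*\|^2$.

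\textbf{Single inner step.} We redo the computation from the proof of Lemma~\ref{lemma:convergence_xk_theta_k}, now within iteration $k$. We use the fixed-point identity $\theta^* = \Pi_\Theta(\theta^*-\beta_k\nabla h(\theta^*))$ and the nonexpansiveness of $\Pi_\Theta$. Strong convexity and Lipschitz continuity of $\nabla h$ (Assumption~\ref{assumption:objs}) then give
\begin{align*}
\|\theta_k^{i+1}-\theta^*\|^2 \leq \left(1-2\beta_k\mu_h+\beta_k^2L_h^2\right)\|\theta_k^i-\theta^*\|^2 + \beta_k^2\|w_k^i\|^2 + 2\beta_k^2 (w_k^i)^T\left(\nabla h(\theta_k^i)-\nabla h(\theta^*)\right) - 2\beta_k (w_k^i)^T(\theta_k^i-\theta^*).
\end{align*}
Since $\theta_k^i\in\mathcal{M}_k^i$ and $\mathbb{E}[w_k^i\mid\mathcal{M}_k^i]=0$, conditioning on $\mathcal{M}_k^i$ removes the two cross terms. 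The second-moment bound leaves $\beta_k^2\nu^2$. Write $q_k := 1-2\beta_k\mu_h+\beta_k^2L_h^2$. The condition $\beta_k\le 2\mu_h/L_h^2$ in Assumption~\ref{assumption:A1steps}(b) guarantees $0\le q_k\le 1$. Moreover, $\beta_k\to 0$ gives $q_k\le 1-\mu_h\beta_k$ for all $k\ge K$.

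\textbf{Chaining over the inner loop.} We take nested conditional expectations along $\mathcal{F}_k\subseteq\mathcal{M}_k^0\subseteq\cdots\subseteq\mathcal{M}_k^R$ (tower property, as in~(\ref{eq:w_k_condition_general})) and iterate $R$ times. This yields
\begin{align*}
\mathbb{E}\left[\|\theta_{k+1}-\theta^*\|^2\mid\mathcal{F}_k\right] \leq q_k^R\,\|\theta_k-\theta^*\|^2 + \beta_k^2\nu^2\sum_{j=0}^{R-1}q_k^j \leq (1-\mu_h\beta_k)\,\|\theta_k-\theta^*\|^2 + R\nu^2\beta_k^2,
\end{align*}
for $k\ge K$, using $q_k^R\le q_k\le 1-\mu_h\beta_k$ and $q_k^j\le 1$.

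\textbf{Applying Lemma~\ref{lemma:convergence_as_1}.} We take $V_k=\|\theta_k-\theta^*\|^2$, $\delta_k=\mu_h\beta_k$ and $\xi_k=R\nu^2\beta_k^2$, shifting indices to start at $K$, which is harmless for almost-sure limits. The hypotheses hold:
\begin{itemize}
\item $\delta_k\in[0,1]$, since $\mu_h\beta_k\le 2\mu_h^2/L_h^2\le 2$; if this exceeds $1$, we instead start from a $K$ where $\beta_k\le 1/\mu_h$.
\item $\sum_k\delta_k=\infty$ by Assumption~\ref{assumption:A1steps}(b).
\item $\sum_k\xi_k<\infty$ by square summability of $\beta_k$.
\item $\xi_k/\delta_k = R\nu^2\beta_k/\mu_h\to 0$.
\end{itemize}
The lemma then gives $\|\theta_k-\theta^*\|^2\to 0$ almost surely, hence $\theta_{k+1}=\theta_k^R\to\theta^*$ almost surely.

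\textbf{Main obstacle.} The delicate step is the filtration bookkeeping that kills the cross terms. It requires $w_k^i$ to have zero conditional mean given $\mathcal{M}_k^i$, which contains $\theta_k^i$. The stated assumption only gives $\mathbb{E}[w_k^i\mid\mathcal{F}_k]=0$, which is not enough. We will therefore adopt the natural reading of Assumption~\ref{assumption:first_second_moment} relative to $\mathcal{M}_k^i$, which is what~(\ref{eq:w_k_condition_general}) implicitly uses. Under that reading, the chaining is routine.
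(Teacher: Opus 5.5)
Your proof is correct and follows the paper's route. Both arguments take the single-step contraction from the proof of Lemma~\ref{lemma:convergence_xk_theta_k}, chain it over the $R$ inner steps using the tower property along $\mathcal{M}_k^i$, and conclude with Lemma~\ref{lemma:convergence_as_1}. As you observe, the paper also implicitly needs $\mathbb{E}[w_k^i\mid\mathcal{M}_k^i]=0$ in (\ref{eq:w_k_condition_general}). The remaining differences are cosmetic: you handle all $R$ at once with the bound $q_k^R\le q_k\le 1-\mu_h\beta_k$, whereas the paper treats $R=1$ separately and uses $(1-\alpha_k)^R\le 1-\frac{R\alpha_k}{1+R}$.
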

\begin{proof}
    First, we consider the case $R=1$.
    By Lemma \ref{lemma:convergence_xk_theta_k}, we know
    \begin{align*}
        ||\theta_{k+1} - \theta^*||^2 &\leq \left(1-2\beta_k\mu_h + \beta_k^2 L_h^2 \right) ||\theta_k-\theta^* ||^2 +\beta_k^2 ||w_k||^2 \\
        &\hspace{3mm} + 2\beta_k^2 (w_k)^T \left( \nabla h(\theta_k)-\nabla h(\theta^*) \right) - 2\beta_k (w_k)^T \left( \theta_k - \theta^* \right)
    \end{align*}
    for all $k$, where $\mu_h>0$ and $L_h >0$ is the Lipschitz constant of $\nabla h$. Taking conditional expectation with respect to $\mathcal{F}_k$, 
    it follows
    \begin{align} \label{ieq:condi_expectation_theta}
        \mathbb{E}\left[ ||\theta_{k+1} - \theta^*||^2 | \mathcal{F}_k \right] &\leq \left(1-2\beta_k\mu_h + \beta_k^2 L_h^2 \right) \mathbb{E}\left[ ||\theta_k-\theta^* ||^2 | \mathcal{F}_k \right] +\beta_k^2 \mathbb{E}\left[ ||w_k||^2 | \mathcal{F}_k \right] \nonumber\\
        &\hspace{3mm} + 2\beta_k^2 \mathbb{E} \left[ (w_k)^T \left( \nabla h(\theta_k)-\nabla h(\theta^*) \right) | \mathcal{F}_k \right]  - 2\beta_k \mathbb{E} \left[ (w_k)^T \left( \theta_k - \theta^* \right) | \mathcal{F}_k \right].
    \end{align}
    According to Assumption \ref{assumption:first_second_moment}, we have 
    \begin{align*}
        \mathbb{E}\left[ ||w_k||^2 | \mathcal{F}_k \right] <\nu^2, \quad 
        \mathbb{E}\left[ (w_k)^T \left( \nabla h(\theta_k)-\nabla h(\theta^*) \right) | \mathcal{F}_k \right] =0,
        \quad 
        \mathbb{E}\left[ (w_k)^T \left( \theta_k - \theta^* \right) | \mathcal{F}_k \right] =0.
    \end{align*}
    Thus, (\ref{ieq:condi_expectation_theta}) can be rewritten as 
    \begin{align*} 
        \mathbb{E}\left[ ||\theta_{k+1} - \theta^*||^2 | \mathcal{F}_k \right] &\leq \left(1- \underbrace{2\beta_k \mu_h + \beta_k^2 L_h^2}_{\alpha_k} \right) ||\theta_k-\theta^* ||^2 +\beta_k^2 \nu^2.
    \end{align*}
     Let $\alpha_k = 2\beta_k \mu_h - \beta_k^2 L_h^2$. Since $\beta_k \leq \beta_0 \leq \frac{2\mu_h}{L_h^2}$ by Assumption~\ref{assumption:A1steps}, it has $\alpha_k\geq 0$. Moreover, we can obtain $\alpha_k \leq 1$ since the condition $\mu_h \leq L_h$ holds. Therefore, we have $0\leq \alpha_k \leq 1$ for all $k$. Based on Assumption~\ref{assumption:A1steps}, it has
     \begin{align*}
         \sum_{k=0}^{\infty} \alpha_k = \sum_{k=0}^{\infty} \left(2\beta_k \mu_h - \beta_k^2 L_h^2 \right) =\infty, \quad \sum_{k=0}^{\infty} \beta_k^2\nu^2<\infty, \quad 
         \lim_{k\to \infty} \frac{\beta_k^2\nu^2}{\alpha_k} = \nu^2\lim_{k\to \infty} \frac{\beta_k }{2\mu - \beta_k L_h^2} =0.
     \end{align*}
    Applying Lemma \ref{lemma:convergence_as_1}, we obtain
    \begin{align*} 
        \lim_{k\to \infty} ||\theta_k - \theta^*||^2 =0 \hspace{2mm} \text{almost surely}.
    \end{align*}
    
    
    Next, we consider a more general case, i.e., $R> 1$. 
    Recall that $\theta_k=\theta_k^0$, $\theta_{k+1}=\theta_{k+1}^0=\theta_k^R$. Then we have 
    \begin{align} \label{ieq:theta_k_convergence_generalization}
        ||\theta_{k+1} - \theta^*||^2=||\theta_{k}^R - \theta^*||^2 
        &\leq (1 - \alpha_k) ||\theta_k^{R-1}-\theta^* ||^2 + \beta_k^2 ||w_k^{R-1}||^2 \nonumber\\
        &\hspace{3mm} + 2\beta_k^2 (w_k^{R-1})^T \left( \nabla h(\theta_k^{R-1})-\nabla h(\theta^*) \right) - 2\beta_k (w_k^{R-1})^T \left( \theta_k^{R-1} - \theta^* \right) \nonumber\\
         &\leq \left(1-\alpha_k \right)^2  ||\theta_k^{R-2}-\theta^* ||^2 + \left(1-\alpha_k \right)\beta_k^2 ||w_k^{R-2}||^2 \nonumber\\
         &\hspace{3mm} + 2 \left(1-\alpha_k \right) \beta_k^2 (w_k^{R-2})^T \left( \nabla h(\theta_k^{R-2})-\nabla h(\theta^*) \right) \nonumber \\
         &\hspace{3mm} - 2 \left(1-\alpha_k \right) \beta_k (w_k^{R-2})^T \left( \theta_k^{R-2} - \theta^* \right) + \beta_k^2 ||w_k^{R-1}||^2 \nonumber\\
         &\hspace{3mm}  + 2\beta_k^2 (w_k^{R-1})^T \left( \nabla h(\theta_k^{R-1})-\nabla h(\theta^*) \right) - 2\beta_k (w_k^{R-1})^T \left( \theta_k^{R-1} - \theta^* \right) \nonumber\\
         &\leq  \left(1-\alpha_k \right)^R ||\theta_k^{0}-\theta^* ||^2 + \beta_k^2 \sum_{i=0}^{R-1} \left( ||w_k^i||^2  \left(1-\alpha_k \right)^{R-1-i} \right)  \nonumber\\
         &\hspace{3mm} + 2 \beta_k^2 \sum_{i=0}^{R-1}\left( (w_k^i)^T \left( \nabla h(\theta_k^{i})-\nabla h(\theta^*) \right) \left(1-\alpha_k \right)^{R-1-i} \right) \nonumber\\
         &\hspace{3mm} -2 \beta_k \sum_{i=0}^{R-1}\left( (w_k^i)^T \left(\theta_k^i-\theta^* \right) \left(1-\alpha_k \right)^{R-1-i} \right).
    \end{align}

    Taking conditional expectation with respect to $\mathcal{F}_k$, 
    (\ref{ieq:theta_k_convergence_generalization}) gives
    \begin{align} \label{ieq:theta_convergence_R_greater1}
        \mathbb{E}\left[ ||\theta_{k}^R - \theta^*||^2 | \mathcal{F}_{k} \right] 
        &\leq \left(1-\alpha_k \right)^R ||\theta_k^{0}-\theta^* ||^2 + \beta_k^2 \sum_{i=0}^{R-1} \left( 1-\alpha_k \right)^{R-1-i} \mathbb{E}\left[ ||w_k^i||^2 | \mathcal{F}_{k} \right] \nonumber \\
        &\hspace{3mm} + 2 \beta_k^2 \sum_{i=0}^{R-1} \left(1-\alpha_k \right)^{R-1-i} \mathbb{E}\left[(w_k^i)^T \left( \nabla h(\theta_k^{i})-\nabla h(\theta^*) \right) | \mathcal{F}_{k} \right] \nonumber \\
        &\hspace{3mm} -2 \beta_k \sum_{i=0}^{R-1} \left(1-\alpha_k \right)^{R-1-i} \mathbb{E}\left[(w_k^i)^T \left(\theta_k^i-\theta^* \right) | \mathcal{F}_{k} \right].
    \end{align}

    Based on (\ref{eq:w_k_condition_general}) and using the tower property, we obtain
    \begin{align*} 
    \mathbb{E} \left[ ||w_k^i||^2 | \mathcal{F}_{k} \right] 
        &=\mathbb{E}\left[  \mathbb{E} \left[||w_k^i||^2 | \mathcal{M}_{k}^i \right] | \mathcal{F}_{k} \right] \leq \mathbb{E} \left[ \nu^2 | \mathcal{F}_{k} \right]=\nu^2, \\
        \mathbb{E}\left[(w_k^i)^T \left( \nabla h(\theta_k^{i})-\nabla h(\theta^*) \right) | \mathcal{F}_{k} \right] 
        &= \mathbb{E}\left[ \mathbb{E} \left[(w_k^i)^T \left( \nabla h(\theta_k^{i})-\nabla h(\theta^*) \right) | \mathcal{M}_{k}^i \right] | \mathcal{F}_{k} \right] = \mathbb{E}\left[ 0 | \mathcal{F}_{k} \right]=0, \nonumber \\
        \mathbb{E}\left[(w_k^i)^T \left(\theta_k^i-\theta^* \right) | \mathcal{F}_{k} \right] 
        &= \mathbb{E}\left[ \mathbb{E} \left[(w_k^i)^T \left(\theta_k^i-\theta^* \right) | \mathcal{M}_{k}^i \right] | \mathcal{F}_{k} \right] = \mathbb{E}\left[ 0 | \mathcal{F}_{k} \right]=0.
    \end{align*}
    Substituting above three expressions back into (\ref{ieq:theta_convergence_R_greater1}), and combining condition $0 \leq \alpha_k \leq 1$, it obtains
    \begin{align} 
        W_{2,k} :&= \beta_k^2 \sum_{i=0}^{R-1} \left( 1-\alpha_k \right)^{R-1-i} \mathbb{E}\left[ ||w_k^i||^2 | \mathcal{F}_{k} \right] \leq \beta_k^2 \nu^2 \sum_{i=0}^{R-1}  \left(1-\alpha_k \right)^{R-1-i} \leq R \beta_k^2 \nu^2 . \label{eq:w_2k} \\
        W_{1,k} :&=  2 \beta_k^2 \sum_{i=0}^{R-1} \left(1-\alpha_k \right)^{R-1-i} \mathbb{E}\left[(w_k^i)^T \left( \nabla h(\theta_k^{i})-\nabla h(\theta^*) \right) | \mathcal{F}_{k} \right] \nonumber \\
        &\hspace{3mm} -2 \beta_k \sum_{i=0}^{R-1} \left(1-\alpha_k \right)^{R-1-i} \mathbb{E}\left[(w_k^i)^T \left(\theta_k^i-\theta^* \right) | \mathcal{F}_{k} \right] \nonumber \\
        &=0 , \label{eq:w_1k}
    \end{align}
    
     Then, we can simplifies (\ref{ieq:theta_convergence_R_greater1}) to
    \begin{align*}
        \mathbb{E}\left[ ||\theta_{k}^R - \theta^*||^2 | \mathcal{F}_{k} \right] 
        &< \left(1-\alpha_k \right)^R||\theta_k^{0}-\theta^* ||^2 + R \beta_k^2 \nu^2.   
    \end{align*}

Since $(1-x)^n \leq \frac{1}{1+nx}$ with $0\leq x\leq 1$ and $n\in \mathbb{N}$ and $0 \leq \alpha_k \leq 1$ for all $k$, it is natural to get
    \begin{align} \label{ieq:psi_relationship}
        \left(1-\alpha_k  \right)^R 
        \leq  \frac{1}{1+R\alpha_k } 
        = 1- \frac{R \alpha_k }{1+ R \alpha_k }   
        \leq 1 - \frac{R \alpha_k}{1+R}.
    \end{align}
    
    Next, we will check if the following four conditions hold:
    \begin{align*}
        0\leq \frac{R \alpha_k }{1+ R} \leq 1, \quad 
        \sum_{k=0}^{\infty} \frac{R \alpha_k }{1+ R } = \infty, \quad  
        R\nu^2 \sum_{k=0}^{\infty}  \beta_k^2 <\infty, \quad
        \lim_{k \to \infty}\frac{R \beta_k^2 \nu^2 ( 1+R)}{ R \alpha_k } =0.
    \end{align*}
    \begin{itemize}
    \item For any $k$, it has $0 \leq \alpha_k \leq1$. Then, it is easy to have $0 \leq \frac{R \alpha_k }{1+ R } < 1$. Moreover,
    \begin{align*}
        \sum_{k=0}^{\infty} \frac{R \alpha_k }{1+ R } = \frac{R}{1+R} \sum_{k=0}^{\infty} \alpha_k = \frac{R}{1+R} \sum_{k=0}^{\infty} \left(2\beta_k \mu_h - \beta_k^2 L_h^2 \right) = \infty.
    \end{align*}

    \item Based on Assumption~\ref{assumption:A1steps}, it has $ R \nu^2 \sum_{k=0}^{\infty}  \beta_k^2 < \infty$.

    \item Lastly, since $\lim_{k \to \infty} \beta_k=0$ according to Assumption~\ref{assumption:A1steps}, 
   we can obtain
   \begin{align*}
   \lim_{k \to \infty}\frac{\beta_k^2 \nu^2 (1+R)}{\alpha_k} 
        = \nu^2 (1+R) \lim_{k\to \infty} \frac{\beta_k^2}{\alpha_k} = \nu^2 (1+R) \lim_{k\to \infty} \frac{\beta_k}{2\mu - \beta_k L_h^2} =0.
    \end{align*}
    \end{itemize}

    Thus, we conclude that $\lim_{k\to \infty} ||\theta_{k+1} - \theta^*||=0$ almost surely by Lemma~\ref{lemma:convergence_as_1}.

\end{proof}

\begin{proposition} \label{prop:xk_theta_k_convergence_as_generalization}
    Given $x_k$, $\theta_k$ and $\mathcal{F}_k$. The algorithm alternates between $Q \geq 1$ outer-level optimization steps and $R \geq 1$ inner-level learning updates. Then it holds that the sequence $\{x_{k+1}\}$ converges almost surely to a point $x^*$ that satisfies the first-order (KKT) stationarity conditions for the problem $\min_{x \in X} f(x, \theta^*)$.
\end{proposition}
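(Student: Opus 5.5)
The plan is to mirror the proof of Proposition~\ref{prop:xk_theta_k_convergence_as}, replacing the one-step recursions by their $Q$-fold and $R$-fold compositions within a global iteration $k$. Throughout, take $V_k := \|x_k - x^*\|^2 + \|\theta_k-\theta^*\|^2$ as the Lyapunov quantity, with $x_k = x_k^0$ and $\theta_k = \theta_k^0$.

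\textbf{Step 1: $Q$ extragradient steps.} Apply Lemma~\ref{lemm:xiter} to each inner step $j=0,\dots,Q-1$. The learning parameter $\theta_k$ is frozen during these steps, so each application gives
\begin{align*}
\|x_k^{j+1}-x^*\|^2 \leq (1+t_{A,k})\|x_k^j-x^*\|^2 - t_B\|x_k^j-x_k^{j+1}\|^2 + 2u_k^j + t_C\gamma_k^2 + t_{D,k}\|\theta_k-\theta^*\|^2 ,
\end{align*}
with $u_k^j := \gamma_k (x^*-x_k^j)^T\nabla f(x_k^j,\theta^*) \le 0$. Unrolling the recursion and using $(1+t_{A,k})^Q \le 1+c_Q t_{A,k}$ (valid for $k$ large, since $t_{A,k}\to 0$), I expect
\begin{align*}
\|x_{k+1}-x^*\|^2 \leq (1+c_Q t_{A,k})\|x_k-x^*\|^2 + c_Q\bigl(t_C\gamma_k^2 + t_{D,k}\|\theta_k-\theta^*\|^2\bigr) + 2\sum_{j} (1+t_{A,k})^{Q-1-j}u_k^j .
\end{align*}
The last sum is nonpositive, so it can be kept as the decrease term $u_k$ for Lemma~\ref{lemma:convergence_as_2}. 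Because all $x_k^j$ are $\mathcal{F}_k$-measurable, this bound holds deterministically given $\mathcal{F}_k$.

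\textbf{Step 2: $R$ learning steps.} This is the bound already derived in the proof of Proposition~\ref{prop:theta_convergence_as}:
\begin{align*}
\mathbb{E}\bigl[\|\theta_k^R-\theta^*\|^2 \mid \mathcal{F}_k\bigr] \le \Bigl(1-\tfrac{R\alpha_k}{1+R}\Bigr)\|\theta_k-\theta^*\|^2 + R\beta_k^2\nu^2 ,
\end{align*}
where the cross terms vanish by \eqref{eq:w_k_condition_general}.

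\textbf{Step 3: combine the two bounds.} Adding them gives a coefficient on $\|\theta_k-\theta^*\|^2$ equal to $1 + c_Q t_{D,k} - \tfrac{R}{1+R}\alpha_k$. Split $t_{D,k}$ into an $O(\gamma_k^2)$ part and the part $L_\theta^2\gamma_k^\tau/\eta_2$. The $O(\gamma_k^2)$ part is summable and can be absorbed into $t_{\max}$, exactly as $t_{E,k}$ was. The remaining quantity is
\begin{align*}
\tilde t_{F,k} := c_Q L_\theta^2\gamma_k^\tau/\eta_2 - \tfrac{R}{1+R}(2\beta_k\mu_h - \beta_k^2L_h^2).
\end{align*}
Assumption~\ref{assumption:A1steps}(c) gives $\gamma_k^\tau/\beta_k \to 0$, so $\tilde t_{F,k}/\beta_k \to -2\mu_h R/(1+R) < 0$. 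Hence $\tilde t_{F,k}\le 0$ for $k\ge K$. This gives
\begin{align*}
\mathbb{E}[V_{k+1}\mid\mathcal{F}_k] \le (1+\tilde t_{\max})V_k + 2u_k + c_Q t_C\gamma_k^2 + R\nu^2\beta_k^2 ,
\end{align*}
with $\sum\tilde t_{\max}<\infty$, which follows from $\sum\gamma_k^2<\infty$ and $\sum\gamma_k^{2-\tau}<\infty$. Lemma~\ref{lemma:convergence_as_2} applied from index $K$ on then yields two conclusions: $V_k$ converges almost surely, and $\sum_k(-u_k)<\infty$ almost surely.

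\textbf{Step 4: identify the limit.} Proposition~\ref{prop:theta_convergence_as} gives $\theta_k\to\theta^*$ almost surely, so $\|x_k-x^*\|$ converges. I expect this step to be the main obstacle. The earlier proof jumps straight to $\|x_k-x^*\|\to0$; to do it honestly I would argue as follows. Summability of $\gamma_k(-u_k^0)/\gamma_k$ combined with $\sum\gamma_k=\infty$ gives $\liminf_k (x_k-x^*)^T\nabla f(x_k,\theta^*)=0$. Along a subsequence with $x_k\to\bar x$ (boundedness holds since $V_k$ converges), this yields $\nabla f(\bar x,\theta^*)^T(\bar x - x^*)=0$. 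Pseudoconvexity, Assumption~\ref{assumption:objs}(a), applied in the contrapositive direction, together with the fact that $x^*$ is a minimizer, should then force $f(\bar x,\theta^*)= f(x^*,\theta^*)$, so $\bar x$ is itself a solution and hence KKT stationary. Finally, I would replace $x^*$ by $\bar x$ in the Lyapunov function; this is legitimate because the recursion holds for any minimizer. Since $\|x_k-\bar x\|$ converges and has a subsequence tending to zero, the whole sequence $x_k$ converges to $\bar x$ almost surely. If the pseudoconvexity argument proves delicate, my fallback is to settle for stationarity of limit points via continuity of the projection residual $\|x-\Pi_X(x-\gamma\nabla f(x,\theta^*))\|$.
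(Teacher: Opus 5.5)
Your Steps 1--3 are the paper's Lyapunov argument in consolidated form. The paper splits into the cases $(Q,R)=(1,1)$, $(>1,1)$, $(1,>1)$ and $(>1,>1)$. The Case 4 estimates it uses are $\sum_{j=0}^{Q-1}(1+t_{A,k})^{Q-1-j}\le Q2^{Q-1}$ and $(1+t_{A,k})^Q-1\le(2^Q-1)t_{A,k}$ for $k\ge K_0$, together with $(1-\alpha_k)^R\le 1-\tfrac{R\alpha_k}{1+R}$. These cover the other cases with only slightly weaker constants, so your single recursion loses nothing; your $\tilde t_{F,k}$ is the paper's $t_{N,k}^{QR}$.

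The genuine difference is Step 4. The paper ends each case with ``by Lemma~\ref{lemma:convergence_as_2}, $x_k\to x^*$''. But Robbins--Siegmund only yields $V_k\to V^*$ for some random $V^*\ge 0$, together with $\sum_k(-u_k)<\infty$. The paper never uses $\sum_k\gamma_k=\infty$, which is exactly what is needed to show the limit is zero. Your Step 4 supplies this missing piece through three moves. First, the liminf argument. Second, extracting a cluster point $\bar x\in X$ with $\nabla f(\bar x,\theta^*)^T(\bar x-x^*)=0$. Third, the pseudoconvexity step, which is used directly rather than contrapositively: the hypothesis $\nabla f(\bar x,\theta^*)^T(x^*-\bar x)=0\ge 0$ gives $f(x^*,\theta^*)\ge f(\bar x,\theta^*)$. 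So your route proves what the paper only asserts, and your fallback is unnecessary.

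One step still needs tightening. The recursion holds with the same constants and the same threshold $K$ for every \emph{fixed} minimizer $z$. This is because the argument for $u_k\le 0$ only uses that $x^*$ is a minimizer, and $B$, $t_C$, $t_{A,k}$, $t_{D,k}$ do not depend on it. However, each $z$ carries its own exceptional null set, whereas $\bar x=\bar x(\omega)$ is random. So ``replace $x^*$ by $\bar x$'' is not immediate. You have two ways to close this:

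\emph{Uniqueness.} Assume the minimizer is unique, as the paper's notation $x^*=\argmin_x f(x,\theta^*)$ suggests. Then $\bar x=x^*$ and no replacement is needed.

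\emph{Countable dense subset.} Take a countable dense subset $\{z_m\}$ of the closed solution set. Intersect the countably many full-measure events on which $\|x_k-z_m\|$ converges. Then pass to an arbitrary minimizer $z$ via $\bigl|\|x_k-z\|-\|x_k-z_m\|\bigr|\le\|z-z_m\|$.

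With either fix your conclusion $x_k\to\bar x$ almost surely goes through.
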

\begin{proof}
    Let the sequences $\{x_k\}$ and $\{\theta_k\}$ be generated by the algorithm~\ref{alg:fusion_alg}. 
    For any $j\in \{0,...,Q\}$, we assume $u_k^j= \gamma_k (x^*-x_k^j)^T \nabla f(x_k^j,\theta^*)$.
    Note that
    \begin{align*}
        x_k=x_k^0, \quad x_{k+1}=x_{k+1}^0=x_k^Q, \\
        \theta_k = \theta_k^0, \quad \theta_{k+1}=\theta_{k+1}^0 = \theta_k^R. 
    \end{align*}
    
    Then we will discuss the convergence property case by case.
    \begin{enumerate}
        \item If $Q=R=1$, it is equivalent to the single-step case in Section~\ref{sec:single_step_alg}. Therefore, Proposition~\ref{prop:xk_theta_k_convergence_as} gives the result.
    
        \item If $Q>1$ and $R=1$, then (\ref{eq:xk_bounded}) becomes
     \begin{align} \label{ieq:xk_bound_generalization_1}    
     ||x_{k+1}-x^* ||^2 = ||x_{k}^Q-x^* ||^2 
     &\leq \left( 1+t_{A,k}\right) ||x_k^{Q-1}-x^* ||^2 - t_B ||x_{k}^{Q-1}-x_{k}^{Q} ||^2  \nonumber\\ 
    &\hspace{3mm} +2 u_k^{Q-1} + t_C \gamma_k^2 + t_{D,k} ||\theta_k-\theta^* ||^2 \nonumber\\ 
    &\leq \left( 1+t_{A,k}\right)^2 ||x_k^{Q-2}-x^* ||^2 - t_B \left( 1+t_{A,k}\right) ||x_{k}^{Q-2}-x_{k}^{Q-1} ||^2 \nonumber\\ 
    &\hspace{3mm} +2 \left( 1+t_{A,k}\right) u_k^{Q-2} + t_C \gamma_k^2 \left( 1+t_{A,k}\right)  + t_{D,k} \left( 1+t_{A,k}\right) ||\theta_k-\theta^* ||^2  \nonumber\\ 
   &\hspace{3mm} - t_B ||x_{k}^{Q-1}-x_{k}^{Q} ||^2 + 2 u_k^{Q-1} + t_C \gamma_k^2 + t_{D,k} ||\theta_k-\theta^* ||^2 \nonumber\\ 
   &\leq  \left( 1+t_{A,k}\right)^Q ||x_k^0-x^*||^2- t_B \sum_{j=0}^{Q-1}\left(||x_k^{j+1}-x_k^{j}||^2 \left( 1+t_{A,k}\right)^{Q-1-j} \right) \nonumber\\ 
   &\hspace{3mm} + 2\left(\sum_{j=0}^{Q-1} u_k^j \left( 1+t_{A,k}\right)^{Q-1-j} \right) + t_C \gamma_k^2 \left( \sum_{j=0}^{Q-1}  \left( 1+t_{A,k}\right)^{Q-1-j} \right) \nonumber\\ 
   &\hspace{3mm} + t_{D,k} \left(\sum_{j=0}^{Q-1}  \left( 1+t_{A,k}\right)^{Q-1-j} \right) ||\theta_k-\theta^*||^2,
    \end{align}
    where $t_{B}>0$, $t_{C}>0$, $u_k^j \leq 0$ for all $k$ and $j$, $t_{A,k} = \eta_1 \gamma_k^2+\eta_2 \gamma_k^{2-\tau}$, $t_{D,k} = t_{E,k} + \frac{L_{\theta}^2 \gamma_k^{\tau}}{\eta_2}= \gamma_k^2 \left( \frac{2L_f^2 L_{\theta}^2}{\eta_1} + \frac{2L_{\theta}^2}{\eta_3} \right)+ \frac{L_{\theta}^2 \gamma_k^{\tau}}{\eta_2}$.
   Combining (\ref{ieq:xk_bound_generalization_1}) and Proposition~\ref{prop:theta_convergence_as}, it gives
    \begin{align} \label{ieq:xk_theta_k_bound_generalization}
    ||x_{k+1}-x^* ||^2+||\theta_{k+1}-\theta^*||^2 
    &\leq  \left( 1+t_{A,k}\right)^Q ||x_k^0-x^*||^2- t_B \sum_{j=0}^{Q-1}\left(||x_k^{j+1}-x_k^{j}||^2 \left( 1+t_{A,k}\right)^{Q-1-j} \right) \nonumber\\ 
   &\hspace{3mm} + 2\left(\sum_{j=0}^{Q-1} u_k^j \left( 1+t_{A,k}\right)^{Q-1-j} \right) + t_C \gamma_k^2 \left( \sum_{j=0}^{Q-1}  \left( 1+t_{A,k}\right)^{Q-1-j} \right) \nonumber\\ 
   &\hspace{3mm} + \left(t_{D,k} \sum_{j=0}^{Q-1} \left( 1+t_{A,k}\right)^{Q-1-j} + 1-\alpha_k \right) ||\theta_k-\theta^*||^2 +\beta_k^2 ||w_k||^2 \nonumber\\
   &\hspace{3mm} + 2\beta_k^2 (w_k)^T \left( \nabla h(\theta_k)-\nabla h(\theta^*) \right) - 2\beta_k (w_k)^T \left( \theta_k - \theta^* \right).
    \end{align}
    Given $\alpha_k = 2\beta_k \mu_h - \beta_k^2 L_h^2$. Then we have
    \begin{align} \label{eq:t_gk^Q_and_t_hk^Q}
         t_{D,k} \sum_{j=0}^{Q-1} \left( 1+t_{A,k}\right)^{Q-1-j} + 1-\alpha_k 
        &= \left( t_{E,k} +\frac{L_{\theta}^2 \gamma_k^{\tau}}{\eta_2} \right) \sum_{j=0}^{Q-1}  \left( 1+t_{A,k}\right)^{Q-1-j} + 1-\alpha_k \nonumber \\
        &= \underbrace{1+ t_{E,k}\sum_{j=0}^{Q-1}   \left( 1+t_{A,k}\right)^{Q-1-j}}_{t_{G,k}^Q} \nonumber\\
        &\hspace{3mm} + \underbrace{\frac{L_{\theta}^2 \gamma_k^{\tau}}{\eta_2} \sum_{j=0}^{Q-1} \left( 1+t_{A,k}\right)^{Q-1-j} -2\beta_k \mu_h + \beta_k^2 L_h^2}_{t_{H,k}^Q}.
    \end{align}
    
    Taking conditional expectation with respect to $\mathcal{F}_k$, (\ref{ieq:xk_theta_k_bound_generalization}) gives
    \begin{align*} 
        \mathbb{E}\left[||x_{k+1}-x^* ||^2+||\theta_{k+1}-\theta^*||^2| \mathcal{F}_{k} \right] &\leq  \left( 1+t_{A,k}\right)^Q ||x_k^0-x^*||^2 \nonumber\\
        &\hspace{3mm} - t_B \sum_{j=0}^{Q-1} \left( 1+t_{A,k}\right)^{Q-1-j} \mathbb{E}\left[||x_k^{j+1}-x_k^{j}||^2| \mathcal{F}_{k} \right] \nonumber\\
        &\hspace{3mm} + 2\sum_{j=0}^{Q-1} \left( 1+t_{A,k}\right)^{Q-1-j} \mathbb{E}\left[u_k^j| \mathcal{F}_{k} \right] + t_C \gamma_k^2 \sum_{j=0}^{Q-1}  \left( 1+t_{A,k}\right)^{Q-1-j}  \nonumber\\
        &\hspace{3mm} + (t_{G,k}^Q + t_{H,k}^Q) ||\theta_k-\theta^*||^2 + \beta_k^2 \mathbb{E}\left[||w_k||^2 |\mathcal{F}_{k} \right] \nonumber\\
        &\hspace{3mm} + 2\beta_k^2 \mathbb{E} \left[ (w_k)^T \left( \nabla h(\theta_k)-\nabla h(\theta^*) \right) | \mathcal{F}_k \right] \nonumber\\
        &\hspace{3mm} - 2\beta_k \mathbb{E} \left[ (w_k)^T \left( \theta_k - \theta^* \right) | \mathcal{F}_k \right] \nonumber\\
        &\leq (1+t_{J,k}^Q) \left(||x_k^0 -x^* ||^2+||\theta_{k}-\theta^*||^2\right) \nonumber\\
        &\hspace{3mm} - t_B \sum_{j=0}^{Q-1} \left( 1+t_{A,k}\right)^{Q-1-j} \mathbb{E}\left[||x_k^{j+1}-x_k^{j}||^2| \mathcal{F}_{k} \right] \nonumber\\
        &\hspace{3mm} + 2\sum_{j=0}^{Q-1} \left( 1+t_{A,k}\right)^{Q-1-j} \mathbb{E}\left[u_k^j| \mathcal{F}_{k} \right]  + t_C \gamma_k^2  \sum_{j=0}^{Q-1} \left( 1+t_{A,k}\right)^{Q-1-j} \nonumber\\
        &\hspace{3mm} +t_{H,k}^Q ||\theta_{k}-\theta^*||^2 +\beta_k^2 \nu^2,
    \end{align*}
    where $t_{J,k}^Q=\max \{\left( 1+t_{A,k}\right)^Q, \hspace{1mm}  t_{G,k}^Q \}-1$. 

    Given the filtration $\mathcal{F}_k$, we have $(x_k, \theta_k) \in \mathcal{F}_k$ with $x_k=x_k^0$. Since the sequence $\{x_k^j\}_{j=1}^Q$ is generated deterministically by (\ref{formu:fusion_general}) without additional noise, it follows that
    \begin{align*}
        x_k^j \in \mathcal{F}_k, \quad \forall j \in \{1, \dots, Q\}
    \end{align*}
    Consequently, by the property of conditional expectation for $\mathcal{F}_k$-measurable random variables, we have
    \begin{align*}
        \mathbb{E}\left[||x_k^{j+1}-x_k^{j}||^2| \mathcal{F}_{k} \right] = ||x_k^{j+1}-x_k^{j}||^2 <\infty.
    \end{align*}
   
    Next, we will check if the following four conditions hold: 
    \begin{align*}
        &\sum_{k=0}^{\infty} t_{J,k}^Q <\infty, \quad 
        \lim_{k \to \infty} t_{H,k}^Q\leq 0, \quad
        t_C \gamma_k^2 \sum_{j=0}^{Q-1} \left( 1+t_{A,k}\right)^{Q-1-j} +\beta_k^2 \nu^2 >0,\\ 
        &\sum_{k=0}^{\infty}  \left(t_C \gamma_k^2 \sum_{j=0}^{Q-1} \left( 1+t_{A,k}\right)^{Q-1-j} +\beta_k^2 \nu^2\right) <\infty.
    \end{align*}
    
    \begin{itemize}
        \item  
        We know from Assumption~\ref{assumption:A1steps} that $\sum_{k=0}^{\infty} t_{A,k} = \eta_1 \sum_{k=0}^{\infty} \gamma_k^2 + \eta_2 \sum_{k=0}^{\infty} \gamma_k^{2-\tau} < \infty$ and $t_{A,k} \to 0$ as $k \to \infty$. 
        Hence, there exists some integer $K_0$ such that for all $k \ge K_0$, it holds $t_{A,k} \leq 1$ and
    \begin{align} \label{ieq:t_ak_bound}
        \sum_{j=0}^{Q-1} \left( 1+t_{A,k}\right)^{Q-1-j} \leq \sum_{j=0}^{Q-1} \left( 1+t_{A,k}\right)^{Q-1} \leq Q \left( 1+t_{A,k}\right)^{Q-1} \leq Q 2^{Q-1}.
    \end{align}
        
        If $\left( 1+t_{A,k}\right)^Q \geq t_{G,k}^Q$, then for all $k \geq K_0$, by the algebraic factorization and $t_{A,k} \leq 1$, we obtain
        \begin{align*}
            t_{J,k}^Q = (1+ t_{A,k})^Q - 1^Q &= ((1+t_{A,k}) - 1) \sum_{j=0}^{Q-1} (1+t_{A,k})^j (1)^{Q-1-j} \\
            &= t_{A,k} \sum_{j=0}^{Q-1} (1+t_{A,k})^j \leq t_{A,k} \sum_{j=0}^{Q-1} 2^j = \left(2^Q - 1 \right) t_{A,k}.
        \end{align*}
    That leads to
    \begin{align} \label{ieq:rk_2n_finite_1}
    \sum_{k=K_0}^{\infty} t_{J,k} \leq \left(2^Q - 1 \right) \sum_{k=K_0}^{\infty} t_{A,k} <\infty \quad \text{and} \quad
        \sum_{k=0}^{\infty} t_{J,k}^Q = \sum_{k=0}^{K_0 -1} t_{J,k}^Q + \sum_{k=K_0}^{\infty} t_{J,k}^Q <\infty.
    \end{align} 
     The term $\sum_{k=0}^{K_0-1} t_{J,k}^Q$ is finite since it has finitely many terms. 
     
     If $t_{G,k}^Q >  \left( 1+t_{A,k}\right)^Q$, then for all $k \geq K_0$ and by (\ref{ieq:t_ak_bound}), it gives
    \begin{align} \label{ieq:rk_2n_finite_2}
        &t_{J,k}^Q = t_{G,k}^Q -1 
        = t_{E,k} \sum_{j=0}^{Q-1} \left( 1+t_{A,k}\right)^{Q-1-j} 
        \leq  t_{E,k} Q 2^{Q-1}. \nonumber\\
 \text{and} \quad   &\sum_{k=0}^{\infty} t_{J,k}^Q = \sum_{k=0}^{K_0-1} t_{J,k}^Q +                 \sum_{k=K_0}^{\infty} t_{J,k}^Q \nonumber \\
        &\hspace{13mm} \leq \sum_{k=0}^{K_0-1} t_{J,k}^Q + Q 2^{Q-1}\sum_{k=K_0}^{\infty} t_{E,k} \nonumber \\
        &\hspace{13mm}= \sum_{k=0}^{K_0-1} t_{J,k}^Q + Q 2^{Q-1} \left( \frac{2L_f^2 L_{\theta}^2}{\eta_1} + \frac{2L_{\theta}^2}{\eta_3} \right)\sum_{k=K_0}^{\infty} \gamma_k^2 <\infty.
    \end{align}
    Similarly, the term $\sum_{k=0}^{K_0-1} t_{J,k}^Q$ is finite. Thus, we conclude that $\sum_{k=0}^{\infty} t_{J,k}^Q<\infty$ holds. 
    
    \item Since $t_C>0$ and $t_{A,k}>0$ for all $k$, it is clear that 
    \begin{align} \label{eq:case2_t_c_gamma_k^2_1}
        t_C \gamma_k^2 \sum_{j=0}^{Q-1} \left( 1+t_{A,k}\right)^{Q-1-j} +\beta_k^2 \nu^2 >0.
    \end{align}
    For all $k \geq K_0$ and by (\ref{ieq:t_ak_bound}), we obtain 
    \begin{align} \label{eq:case2_t_c_gamma_k^2_2}
        &\hspace{3mm}\sum_{k=0}^{\infty}  \left(t_C \gamma_k^2 \sum_{j=0}^{Q-1} \left( 1+t_{A,k}\right)^{Q-1-j} +\beta_k^2 \nu^2\right) \nonumber \\
        &= t_C \sum_{k=0}^{K_0-1} \left( \gamma_k^2 \sum_{j=0}^{Q-1} \left( 1+t_{A,k}\right)^{Q-1-j} \right) + t_C \sum_{k=K_0}^{\infty} \left(\gamma_k^2 \sum_{j=0}^{Q-1} \left( 1+t_{A,k}\right)^{Q-1-j} \right )+ \nu^2 \sum_{k=0}^{\infty} \beta_k^2 \nonumber\\
        &\leq t_C \sum_{k=0}^{K_0-1} \left( \gamma_k^2 \sum_{j=0}^{Q-1} \left( 1+t_{A,k}\right)^{Q-1-j} \right) + t_C Q2^{Q-1} \sum_{k=K_0}^{\infty} \gamma_k^2 + \nu^2 \sum_{k=0}^{\infty} \beta_k^2 <\infty.
    \end{align}
    
    \item Lastly, let us consider the term $t_{H,k}^Q$. Since $\beta_k >0$ for all $k$, then 
    for any $k \geq K_0$ and by (\ref{ieq:t_ak_bound}), it has
    \begin{align*}
        \limsup_{k\to \infty} \left(\frac{t_{H,k}^Q}{\beta_k} \right)
        &= \limsup_{k\to \infty} \left(\frac{L_{\theta}^2 \gamma_k^{\tau}}{\eta_2 \beta_k} \sum_{j=0}^{Q-1} \left( 1+t_{A,k}\right)^{Q-1-j} -2 \mu_h + \beta_k L_h^2 \right) \\
        &\leq \frac{L_{\theta}^2 Q 2^{Q-1}}{\eta_2} \lim_{k\to \infty} \frac{ \gamma_k^{\tau}}{\beta_k} - 2 \mu_h + L_h^2 \lim_{k\to \infty} \beta_k =-2\mu_h < 0.
    \end{align*}
    Then, there exists $K_1$ such that for all $k \geq \max\{K_0, K_1\}$, we have $\frac{t_{H,k}^Q}{\beta_k} \leq - \mu_h$, and it leads to $t_{H,k}^Q \leq - \mu_h \beta_k<0$.
    That is, there exists an integer $\max\{K_0, K_1\}$ such that for every $k \geq \max\{K_0, K_1\}$, we have $t_{H,k}^Q < 0$. 
    \end{itemize}
     Therefore, we can conclude that the sequence $\{x_{k+1}\}$ converges to $x^*$ almost surely for case 2 (i.e., $Q>1,R=1$) by Lemma~\ref{lemma:convergence_as_2}.

    \item If $Q=1$ and $R>1$, then
    combining Lemma~\ref{lemm:xiter} and Proposition~\ref{prop:theta_convergence_as}, it is easy to have
    \begin{align} \label{ieq:xk_theta_k_Q1_R_general}
        ||x_{k+1}-x^*||+||\theta_{k+1} - \theta^*||^2 &= ||x_{k+1}-x^*|| + ||\theta_{k}^R - \theta^*||^2 \nonumber \\
        &\leq \left( 1+t_{A,k} \right) ||x_k - x^* ||^2 - t_B ||x_{k}-x_{k+1} ||^2 \nonumber\\ 
        &\hspace{3mm} + 2u_k  + \left( t_{D,k} +\left(1-\alpha_k  \right)^R \right) ||\theta_k^0 -\theta^* ||^2 \nonumber\\
        &\hspace{3mm} + t_C \gamma_k^2 + \beta_k^2 \sum_{i=0}^{R-1} ||w_k^i||^2 \left(1-\alpha_k \right)^{R-1-i} \nonumber\\
        &\hspace{3mm} + 2 \beta_k^2 \sum_{i=0}^{R-1}\left( (w_k^i)^T \left( \nabla h(\theta_k^{i})-\nabla h(\theta^*) \right) \left(1-\alpha_k \right)^{R-1-i} \right) \nonumber\\
        &\hspace{3mm} -2 \beta_k \sum_{i=0}^{R-1}\left( (w_k^i)^T \left(\theta_k^i-\theta^* \right) \left(1-\alpha_k \right)^{R-1-i} \right),
    \end{align}
    where $t_{D,k} = t_{E,k} + \frac{L_{\theta}^2 \gamma_k^{\tau}}{\eta_2}= \gamma_k^2 \left( \frac{2L_f^2 L_{\theta}^2}{\eta_1} + \frac{2L_{\theta}^2}{\eta_3} \right)+ \frac{L_{\theta}^2 \gamma_k^{\tau}}{\eta_2}$. 
    We know $\alpha_k=2\beta_k\mu_h - \beta_k^2 L_h^2$ and $\left(1-\alpha_k  \right)^R \leq 1- \frac{R \alpha_k }{1+ R}$ from (\ref{ieq:psi_relationship}), it gives
    \begin{align*}
    t_{D,k} + \left(1-\alpha_k  \right)^R 
    \leq t_{E,k} + \frac{L_{\theta}^2 \gamma_k^{\tau}}{\eta_2} +1- \frac{R\alpha_k }{1+R} = t_{E,k} + 1 + \underbrace{\frac{L_{\theta}^2 \gamma_k^{\tau}}{\eta_2} - \frac{R}{1+R} \left( 2\beta_k\mu_h - \beta_k^2 L_h^2\right)}_{t_{L,k}^R}.    
    \end{align*}
    Taking conditional expectation with respect to $\mathcal{F}_k$, (\ref{ieq:xk_theta_k_Q1_R_general}) gives
    \begin{align} \label{ieq:xk_theta_k_Q1_R_general_expectation}
        \mathbb{E}\left[||x_{k+1}-x^* ||^2+||\theta_{k+1}-\theta^*||^2| \mathcal{F}_{k} \right] 
        &\leq (1 + t_{\text{max}}) (||x_k-x^*||^2+||\theta_k-\theta^*||^2)    \nonumber \\
        &\hspace{3mm} - t_B ||x_{k}-x_{k+1} ||^2 + 2u_k + t_{L,k}^R ||\theta_k^0-\theta^*||^2 + t_C \gamma_k^2 \nonumber \\
        &\hspace{3mm} + \beta_k^2 \sum_{i=0}^{R-1}  \left(1-\alpha_k \right)^{R-1-i}  \mathbb{E}\left[ ||w_k^i||^2 | \mathcal{F}_{k} \right] \nonumber \\
        &\hspace{3mm} + 2 \beta_k^2 \sum_{i=0}^{R-1} \left(1-\alpha_k \right)^{R-1-i} \mathbb{E}\left[(w_k^i)^T \left( \nabla h(\theta_k^{i})-\nabla h(\theta^*) \right) | \mathcal{F}_{k} \right]  \nonumber \\
        &\hspace{3mm} -2 \beta_k \sum_{i=0}^{R-1} \left(1-\alpha_k \right)^{R-1-i} \mathbb{E}\left[(w_k^i)^T \left(\theta_k^i-\theta^* \right) | \mathcal{F}_{k} \right], \nonumber \\
        &= (1 + t_{\text{max}}) (||x_k-x^*||^2+||\theta_k-\theta^*||^2) - t_B ||x_{k}-x_{k+1} ||^2   \nonumber \\
        &\hspace{3mm}  + 2u_k + t_{L,k}^R ||\theta_k^0-\theta^*||^2 + t_C \gamma_k^2 + W_{2,k} + W_{1,k}
    \end{align}
    where $t_{\text{max}}=\max \{t_{A,k}, t_{E,k} \}$, $W_{1,k}$ and $W_{2,k}$ are defined as in (\ref{eq:w_1k}) and (\ref{eq:w_2k}). 
     Since $\mu_h >0$, we can obtain
    \begin{align*}
        \lim_{k\to \infty} \left(\frac{t_{L,k}^R}{\beta_k} \right)
        = \frac{L_{\theta}^2}{\eta_2} \lim_{k\to \infty} \frac{ \gamma_k^{\tau}}{\beta_k} - \frac{2\mu_h R}{1+R} + \frac{R L_h^2}{1+R} \lim_{k\to \infty} \beta_k = -\frac{2\mu_h R}{1+R} < 0. 
    \end{align*}
    Hence, there exists $K_2$ such that for all $k \geq K_2$, one has $\frac{t_{L,k}^R}{\beta_k} \leq - \frac{\mu_h}{1+R}$, which implies
    \begin{align*}
        t_{L,k}^R \leq - \frac{\mu_h \beta_k}{1+R} <0
    \end{align*}
    for all $k \geq K_2$. 
    According to (\ref{eq:w_2k}) and (\ref{eq:w_1k}), we know $W_{1,k}=0$ and $0< W_{2,k} < R \beta_k^2 \nu^2$. Then (\ref{ieq:xk_theta_k_Q1_R_general_expectation}) can be simplified as
     \begin{align*}
        \mathbb{E}\left[||x_{k+1}-x^* ||^2+||\theta_{k+1}-\theta^*||^2| \mathcal{F}_{k} \right] 
        &\leq (1+ t_{\text{max}}) (||x_k-x^*||^2+||\theta_k-\theta^*||^2) + 2u_k\\
        &\hspace{3mm} + t_C \gamma_k^2 + R \beta_k^2 \nu^2.
    \end{align*}
    Next, we will check if the two conditions in Lemma~\ref{lemma:convergence_as_2} are satisfied: 
    \begin{align*}
        \sum_{k=0}^{\infty} t_{\text{max}} <\infty \quad \text{and} \quad \sum_{k=0}^{\infty} \left( t_C \gamma_k^2 +  R \beta_k^2 \nu^2 \right) <\infty.
    \end{align*}

    \begin{itemize}
        \item Since $\sum_{k=0}^{\infty} \gamma_k^2 <\infty$, $\sum_{k=0}^{\infty} \beta_k^2 <\infty$ by Assumption~\ref{assumption:A1steps} and $\sum_{k=0}^{\infty} t_{A,k} < \infty$, it has
        \begin{align*}
            &\sum_{k=0}^{\infty} t_{\text{max}} =\sum_{k=0}^{\infty} \max\{t_{A,k}, t_{E,k}\}= \sum_{k=0}^{\infty} \max \left\{t_{A,k}, \left(\frac{2 L_f^2 L_{\theta}^2}{\eta_1}+\frac{2 L_{\theta}^2}{\eta_3} \right)\gamma_k^2 \right\} < \infty, \\
\text{and} \quad        &\sum_{k=0}^{\infty} \left( t_C \gamma_k^2 +  R \beta_k^2 \nu^2 \right) = t_C \sum_{k=0}^{\infty} \gamma_k^2 + R \nu^2 \sum_{k=0}^{\infty} \beta_k^2  <\infty.
        \end{align*}
    \end{itemize}
    Both conditions are met, therefore, it holds that $x_k \to x^*$ as $k\to \infty$ for case 3 (i.e., $Q=1,R>1$) by Lemma~\ref{lemma:convergence_as_2}. 
    
    \item If $Q>1$ and $R>1$, where $Q$ and $R$ could be different, then combining (\ref{ieq:theta_k_convergence_generalization}) and (\ref{ieq:xk_bound_generalization_1}), it gives
    \begin{align} \label{ieq:convergence_c4_QR_generalization}
        ||x_{k+1}-x^* ||^2+||\theta_{k+1}-\theta^*||^2 &= ||x_{k}^Q-x^* ||^2+||\theta_{k}^R-\theta^*||^2 \nonumber \\
        &\leq  \left( 1+t_{A,k}\right)^Q ||x_k^0-x^*||^2- t_B \sum_{j=0}^{Q-1}\left(||x_k^{j+1}-x_k^{j}||^2 \left( 1+t_{A,k}\right)^{Q-1-j} \right) \nonumber\\ 
        &\hspace{3mm} + 2\left(\sum_{j=0}^{Q-1} u_k^j \left( 1+t_{A,k}\right)^{Q-1-j} \right) + t_C \gamma_k^2 \sum_{j=0}^{Q-1}  \left( 1+t_{A,k}\right)^{Q-1-j}  \nonumber\\ 
        &\hspace{3mm} + \left(t_{D,k} \sum_{j=0}^{Q-1} \left( 1+t_{A,k}\right)^{Q-1-j} + \left(1-\alpha_k \right)^R \right) ||\theta_k^0 -\theta^*||^2 \nonumber\\
        &\hspace{3mm} + \beta_k^2 \sum_{i=0}^{R-1} \left(||w_k^i||^2 \left(1-\alpha_k \right)^{R-1-i}\right)  \nonumber\\
        &\hspace{3mm} + 2 \beta_k^2 \sum_{i=0}^{R-1}\left( (w_k^i)^T \left( \nabla h(\theta_k^{i})-\nabla h(\theta^*) \right) \left(1-\alpha_k \right)^{R-1-i} \right) \nonumber\\
        &\hspace{3mm} -2 \beta_k \sum_{i=0}^{R-1}\left( (w_k^i)^T \left(\theta_k^i-\theta^* \right) \left(1-\alpha_k \right)^{R-1-i} \right). 
    \end{align}

    Given $\alpha_k=2\beta_k \mu_h - \beta_k^2 L_h^2$ and $\left(1-\alpha_k  \right)^R \leq 1- \frac{R \alpha_k }{1+ R}$ by (\ref{ieq:psi_relationship}), we obtain
    \begin{align*}
        t_{D,k}  \sum_{j=0}^{Q-1} \left( 1+t_{A,k}\right)^{Q-1-j} + \left(1-\alpha_k \right)^R 
        &= \left( t_{E,k} + \frac{L_{\theta}^2 \gamma_k^{\tau}}{\eta_2} \right) \sum_{j=0}^{Q-1} \left( 1+t_{A,k}\right)^{Q-1-j} + \left(1-\alpha_k \right)^R \\
        &\leq \underbrace{t_{E,k} \sum_{j=0}^{Q-1} \left( 1+t_{A,k}\right)^{Q-1-j}  + 1}_{t_{G,k}^Q}  \\
        &\hspace{3mm} + \underbrace{\frac{L_{\theta}^2 \gamma_k^{\tau}}{\eta_2} \sum_{j=0}^{Q-1} \left( 1+t_{A,k}\right)^{Q-1-j} - \frac{R}{1+R} \left( 2\beta_k \mu_h - \beta_k^2 L_h^2\right)}_{t_{N,k}^{QR}}.
    \end{align*}
    Note that the term $t_{G,k}^Q$ also appears in (\ref{eq:t_gk^Q_and_t_hk^Q}).
    Taking conditional expectation with respect to $\mathcal{F}_k$ to  (\ref{ieq:convergence_c4_QR_generalization}), we get
    \begin{align*} 
        \mathbb{E}\left[||x_{k+1}-x^* ||^2+||\theta_{k+1}-\theta^*||^2| \mathcal{F}_{k} \right] 
        &\leq (1+t_{M,k}^{QR}) (||x_k^0 -x^*||^2+||\theta_k^0 -\theta^*||^2)  \nonumber\\
        &\hspace{3mm} - t_B \sum_{j=0}^{Q-1} \left( 1+t_{A,k}\right)^{Q-1-j} \mathbb{E}\left[||x_k^{j+1}-x_k^{j}||^2| \mathcal{F}_{k} \right] \nonumber\\
        &\hspace{3mm} + 2\sum_{j=0}^{Q-1} \left( 1+t_{A,k}\right)^{Q-1-j} \mathbb{E}\left[u_k^j| \mathcal{F}_{k} \right] +t_C \gamma_k^2 \sum_{j=0}^{Q-1} \left( 1+t_{A,k}\right)^{Q-1-j} \nonumber\\
        &\hspace{3mm} + t_{N,k}^{QR} ||\theta_k^0 -\theta^*||^2 + W_{2,k} +W_{1,k},
    \end{align*}
    where $t_{M,k}^{QR}=t_{J,k}^Q=\max \{\left( 1+t_{A,k}\right)^Q,   t_{G,k}^Q \}-1$, $W_{1,k}$ and $W_{2,k}$ are defined as in (\ref{eq:w_1k}) and (\ref{eq:w_2k}). 
    
    Next, we will check if the following four conditions hold: 
    \begin{align*} 
        &\sum_{k=0}^{\infty} t_{M,k}^{QR}<\infty, \quad 
        \lim_{k\to \infty} t_{N,k}^{QR}\leq 0, \quad
        t_C \gamma_k^2\sum_{j=0}^{Q-1} \left( 1+t_{A,k}\right)^{Q-1-j} +W_{2,k}-W_{1,k}>0, \\
        &\sum_{k=0}^{\infty} \left(t_C \gamma_k^2 \sum_{j=0}^{Q-1} \left( 1+t_{A,k}\right)^{Q-1-j}  + W_{2,k}-W_{1,k} \right) <\infty.
    \end{align*}
    
    \begin{itemize}
        \item As shown in (\ref{ieq:rk_2n_finite_1}) and (\ref{ieq:rk_2n_finite_2}), we know $\sum_{k=0}^{\infty} t_{M,k}^{QR} = \sum_{k=0}^{\infty} t_{J,k}^Q <\infty$.

        \item  
        Let us consider the term $t_{N,k}^{QR}$. As shown in the previous, there exists $K_0$ such that $t_{A,k} \leq 1$ for all $k \geq K_0$. Then for $k \geq K_0$, it has $\sum_{j=0}^{Q-1} \left( 1+t_{A,k}\right)^{Q-1-j} \leq Q2^{Q-1}$ by (\ref{ieq:t_ak_bound}) and
        \begin{align*}
         \limsup_{k\to \infty} \left(\frac{t_{N,k}^{QR}}{\beta_k}\right)  
        &\leq \frac{L_{\theta}^2 Q 2^{Q-1}}{\eta_2} \lim_{k\to \infty} \frac{ \gamma_k^{\tau}}{\beta_k} - \frac{2\mu_h R}{1+R} + \frac{R L_h^2}{1+R} \lim_{k\to \infty} \beta_k = -\frac{2\mu_h R}{1+R} <0.
    \end{align*}
    Similar to case 3, there exists $K_2$ such that for all $k \geq \max\{K_0, K_2\}$, it holds $\frac{t_{N,k}^{QR}}{\beta_k} \leq - \frac{\mu_h}{1+R}$, and we know $\beta_k >0$ for all $k$,
    \begin{align*}
        t_{N,k}^{QR} \leq - \frac{\mu_h \beta_k}{1+R} <0,
    \end{align*}
    i.e., for every $k \geq \max\{K_0, K_2\}$, we have $t_{N,k}^{QR} < 0$. 

    \item Similar to (\ref{eq:case2_t_c_gamma_k^2_1}) and (\ref{eq:case2_t_c_gamma_k^2_2}) in case $2$, it is easy to obtain 
    \begin{align*}
        t_C \gamma_k^2 \sum_{j=0}^{Q-1} \left( 1+t_{A,k}\right)^{Q-1-j} >0, \quad \text{and} \quad \sum_{k=0}^\infty \left(t_C \gamma_k^2 \sum_{j=0}^{Q-1} \left( 1+t_{A,k}\right)^{Q-1-j} \right) <\infty.
    \end{align*}
    
    From (\ref{eq:w_2k}) and (\ref{eq:w_1k}), we know 
    \begin{align*}
        W_{2,k} - W_{1,k} >0, \quad \text{and}\quad \sum_{k=0}^{\infty} \left( W_{2,k} - W_{1,k}\right) = \sum_{k=0}^{\infty} W_{2,k} < R \nu^2 \sum_{k=0}^{\infty} \beta_k^2 <\infty.
    \end{align*} 
    It is natural to have
    \begin{align*}
    &t_C \gamma_k^2\sum_{j=0}^{Q-1} \left( 1+t_{A,k}\right)^{Q-1-j} +W_{2,k}-W_{1,k}>0, \\
    \text{and} \quad  &\sum_{k=0}^{\infty} \left(t_C \gamma_k^2 \sum_{j=0}^{Q-1} \left( 1+t_{A,k}\right)^{Q-1-j}  + W_{2,k}-W_{1,k} \right) < \infty.
    \end{align*}
    \end{itemize}
    All conditions are met. Thus, we conclude that the sequence $\{x_k\}$ converges to $x^*$ almost surely for case 4 (i.e., $Q>1,R>1$) by Lemma~\ref{lemma:convergence_as_2}.

    \end{enumerate}
\end{proof}
\paragraph{Remark:} While the analysis accommodates the general case $Q>1$ and $R>1$, empirical behavior indicates that performance is often best when $Q=1$ and $R>1$. The case $Q=R=1$ is typically suboptimal, and larger values of $Q$ tend to degrade performance. These trends are illustrated in the numerical section that follows, while the theoretical guarantees hold for all admissible choices of $Q$ and $R$.

\section{Numerical studies} \label{sec:numerics}

This section is organized into two subsections, corresponding to the two classes of test problems
considered in this chapter: multi-objective optimal power flow (OPF) problems and retail
portfolio (revenue) maximization problems. For all numerical experiments, the proposed
algorithm~\ref{alg:fusion_alg} was executed under two stopping criteria: (i) a fixed total
computation time $T$, and (ii) a fixed total number of iterations $N_{\text{iters}}$. The
corresponding values were chosen based on the problem class: for the multi-objective OPF
problems, we set $N_{\text{iters}}=100$ and $T=600$ seconds, while for the retail revenue
maximization problems, we used $N_{\text{iters}}=500$ and $T=30$ seconds.

In addition, we study the influence of the following two factors on the performance of our
algorithm:
\begin{itemize}
    \item \textbf{Outer--inner iteration configuration:}  
    We vary the number of outer and inner iterations in the two-level framework, where
    $\texttt{Out}$ denotes the number of outer-level iterations and $\texttt{In}$ denotes the
    number of inner-level iterations performed at each outer iteration. In all experiments,
    we consider the following eleven $(\texttt{Out},\texttt{In})$ combinations:
    \begin{align*}
        \{ &(15,15),\; (15,7),\; (15,1),\;
           (7,15),\; (7,7),\; (7,1),\;
           (1,15),\; (1,7),\; (1,1),\; (1,0),\; (0,1) \}.
    \end{align*}
    \item \textbf{Initial step size selection:}  
    We examine the effect of the initial step size parameter $\gamma_0$ on convergence.
    Unless stated otherwise, three values are tested in the experiments,
    $\gamma_0 \in \{1.0,\,0.1,\,0.01\}$, allowing us to assess the robustness of the algorithm
    with respect to scaling.
\end{itemize}
All experiments were carried out on a desktop equipped with an NVIDIA RTX~3080 GPU and an Intel Core i5--12600K CPU. The algorithm was implemented in Python~3.8.

\subsection{Multi-objective optimal power flow (OPF)}
For the first application, we consider a multi-objective optimal power flow (OPF) problem
within the proposed two-level optimization framework. The goal is to balance two competing
objectives at the outer level: minimizing generation costs and maximizing the penetration of
renewable energy.

In this setting, the inner-level problem corresponds to a regression task that predicts solar power generation. This learning component provides the estimated renewable generation, which serves as an input to the outer-level OPF problem. Accurate solar power prediction is therefore critical, as it directly affects the amount of renewable energy available to satisfy system demand. Using these predictions, the outer-level problem solves a multi-objective OPF to determine the optimal dispatch of conventional (oil-based) generators while accounting for renewable integration.

\subsubsection{Datasets}
We construct a single OPF test instance for the Berlin region by fusing multiple real-world
data sources: (i) weather features and solar generation for the learning task, and (ii) oil
generation capacities together with total demand for the OPF constraints and objectives.
\begin{itemize}
    \item \textbf{Weather and solar (inner-level learning, 2018).}
    The solar prediction task combines feature data from the National Renewable Energy
    Laboratory (NREL)~\cite{NREL} with solar generation targets from
    Netztransparenz~\cite{Netztransparenz}. The NREL data provides $23$ structured weather
    features (e.g., temperature, humidity, wind speed), which serve as inputs to a regression
    model, while the Netztransparenz solar generation values serve as the target output.
    These data are taken over a $90$-day horizon starting from January~1,~2018, recorded at
    $15$-minute intervals (i.e., $8{,}640$ time points).

\item \textbf{Oil capacities and demand (outer-level OPF, 2025).}
The OPF layer uses real generator capacity profiles for three oil-based units together with a total electricity demand time series for Berlin, all taken from $2025$ and sourced from the SMARD platform~\cite{SMARD}. These datasets define the feasibility and power balance constraints. At each time instance, the sum of predicted solar generation and dispatched oil generation is required to meet or exceed the total demand. The resulting multi-objective OPF formulation captures the trade-off between minimizing operational cost and increasing renewable energy utilization.
\end{itemize}

\subsubsection{Results}
As described in Section~\ref{sec:intro_energy_problems}, the multi-objective OPF problem
considers two competing outer-level objectives: minimization of generation cost $f_1$ and
maximization of renewable energy penetration $f_2$. We adopt a weighted-sum scalarization
and define the outer-level optimization problem as
\begin{align*}
    \min_{x} \hspace{1mm} F_{\mathrm{ws}}(x,\theta^*) 
    = w_1 \, f_1(x,\theta^*) - w_2 \, f_2(x,\theta^*).
\end{align*}
To obtain an approximation of the Pareto front, we solve the scalarized problem for eleven
uniformly spaced weight pairs,
\begin{align*}
    (w_1,w_2) \in \left\{ (0.1k,\; 1-0.1k) \;\middle|\; k = 0,1,\ldots,10 \right\}.
\end{align*}

The inner-level problem consists of training an XGBoost regression model to predict renewable
(i.e., solar) generation. The XGBoost hyperparameters are fixed across experiments, with the maximum tree depth set to $4$ and the maximum number of leaves set to $8$. The learning rate $\beta_k$ in Algorithm~\ref{alg:fusion_alg} follows the prescribed decreasing schedule. At the inner level, updates are performed incrementally using the additive structure of XGBoost, which corresponds to a single functional gradient step at each iteration. The ensemble from iteration $k-1$ is reused and augmented with new trees at iteration $k$, rather than retraining from scratch. This continuation strategy aligns with the iterative framework and yields substantial computational savings.
We note that the proposed framework is not restricted to XGBoost and can accommodate a
broad class of machine learning models without modification. In particular, ensemble methods
and deep neural networks naturally support incremental training through the addition of
trees or layers, respectively, making them well suited to the iterative structure of the
algorithm. Simpler models, such as polynomial or linear regression, can also be incorporated within the same framework, as they admit analogous update mechanisms. 
 
Algorithmic performance is evaluated using the hypervolume (HV) indicator. For this purpose,
the final solution sets obtained from all eleven weighted-sum runs are aggregated, and the
hypervolume of the resulting combined set is computed. Prior to computing the hypervolume,
both objective functions are normalized, as generation cost and renewable penetration are
defined on different numerical scales. We employ a maximum-value normalization based on the $L_\infty$ norm. Let $S$ denote the set of all solutions obtained from the configurations under consideration (excluding the
identified outlier). For a given objective $m$, let $f_m(s)$ denote its unnormalized value for
a solution $s \in S$. The normalization factor and objective values are respectively defined as follows.
\begin{align*}
    f_m^{\max} = \max_{s' \in S} \, |f_m(s')|, \hspace{2mm}
    f_m^{\mathrm{nor}}(s) = \frac{f_m(s)}{f_m^{\max}}.
\end{align*}
This normalization preserves the relative dominance structure of the solution set, which is
critical for meaningful hypervolume comparisons. A known limitation of this normalization is its sensitivity to extreme values: a single
outlying solution can dominate the scaling and compress the remaining values. In our
experiments, the configuration with \texttt{Out}$=0$ and \texttt{In}$=1$ exhibited such
behavior and was therefore excluded prior to normalization. All subsequent hypervolume
computations are based on the remaining \texttt{Out}--\texttt{In} configurations.

Figure~\ref{fig:mop_opf_diff_out_in} shows that the hypervolume performance is governed
primarily by the number of outer-loop iterations \texttt{Out}, while the number of inner-loop
iterations \texttt{In} has a negligible or adverse effect. In
Figure~\ref{fig:mop_opf_diff_out_in}(a), the hypervolume trajectories corresponding to
different values of \texttt{In} overlap completely for any fixed \texttt{Out}. For example,
the three configurations with \texttt{Out}$=15$ and \texttt{In}$\in\{1,7,15\}$ produce
identical trajectories, as do the configurations with \texttt{Out}$=7$. This indicates that
additional inner-loop iterations do not improve solution quality once the outer-loop update
is fixed. Instead, the algorithm exhibits three distinct performance regimes determined
solely by \texttt{Out}, with \texttt{Out}$=15$ achieving the fastest convergence and highest
hypervolume, followed by \texttt{Out}$=7$ and \texttt{Out}$=1$.

The computational implications of this behavior are evident in
Figure~\ref{fig:mop_opf_diff_out_in}(b). When \texttt{Out}$= 15$, the hypervolume curves
remain overlapped under a fixed time budget, indicating that the cost of the inner loop is
negligible relative to the dominant outer-loop computation. In contrast, for the
\texttt{Out}$=1$ setting, the hypervolume achieved within a fixed time budget decreases as
\texttt{In} increases. In particular, the configuration with \texttt{In}$=0$ slightly
outperforms that with \texttt{In}$=7$, as inner-loop iterations introduce computational
overhead without yielding corresponding gains. This behavior shows that, in this application,
maximizing the frequency of outer-loop updates by minimizing the inner loop is the most
effective strategy.

\begin{figure*}[ht] 
  \centering
  \begin{subfigure}{.45\textwidth}
    \centering
    \includegraphics[width=\linewidth]{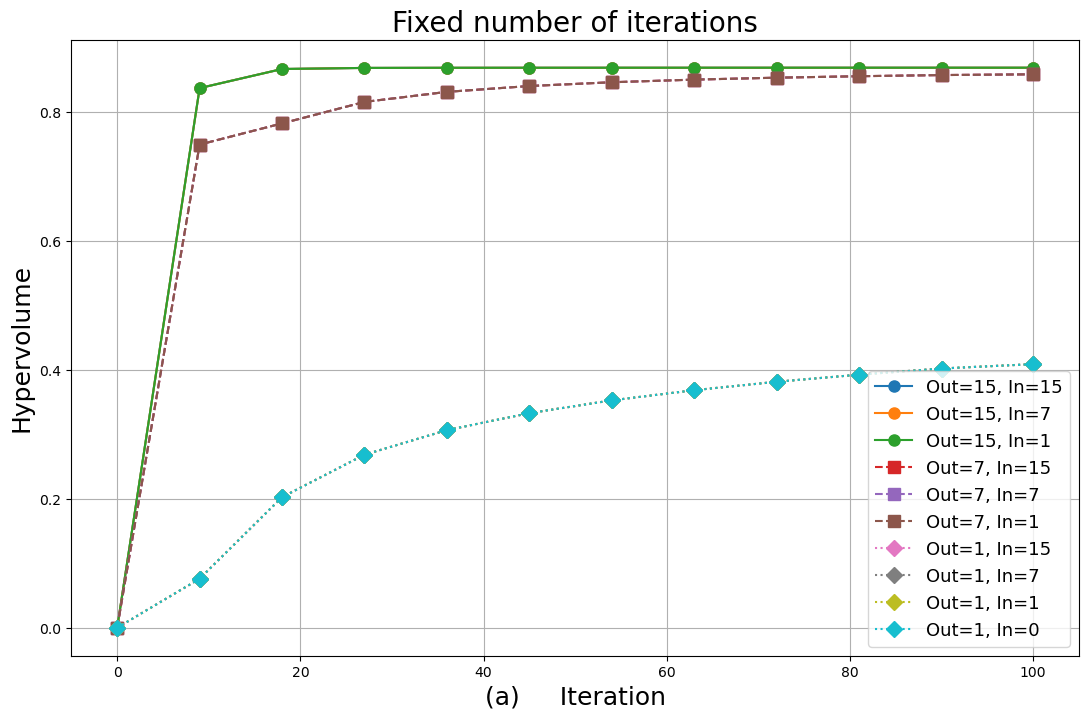}
  \end{subfigure}%
\begin{subfigure}{.45\textwidth}
    \centering
    \includegraphics[width=\linewidth]{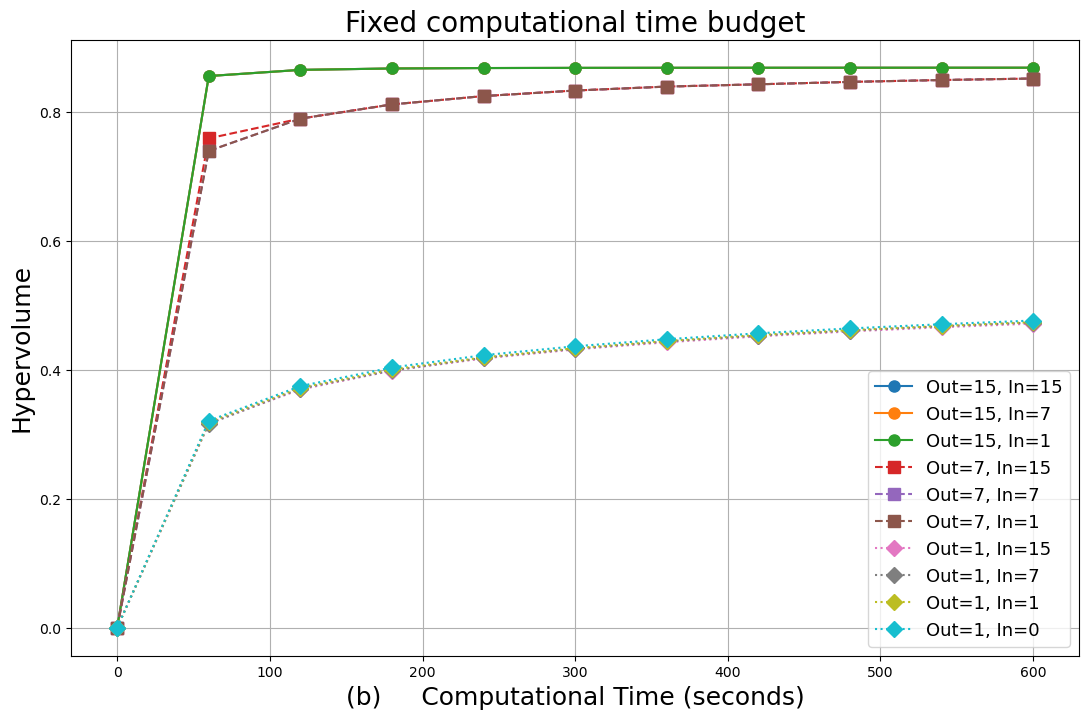}
  \end{subfigure} 
  \caption{Hypervolume convergence for different outer- and inner-loop configurations.
For any fixed \texttt{Out}, the trajectories overlap in panel~(a); the same effect appears
in panel~(b) for \texttt{Out}$=15$ and \texttt{Out}$=1$.
}
\label{fig:mop_opf_diff_out_in}
\end{figure*}

\begin{figure*}[ht] 
  \centering
  \begin{subfigure}{.45\textwidth}
    \centering
    \includegraphics[width=\linewidth]{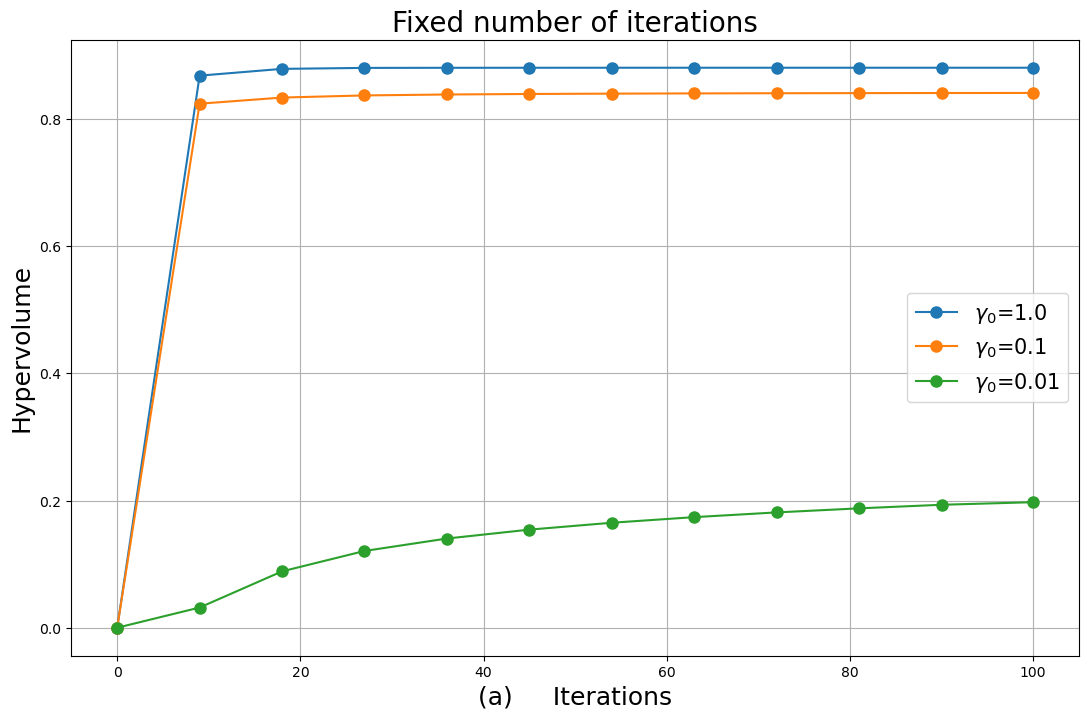}
  \end{subfigure}%
\begin{subfigure}{.45\textwidth}
    \centering
    \includegraphics[width=\linewidth]{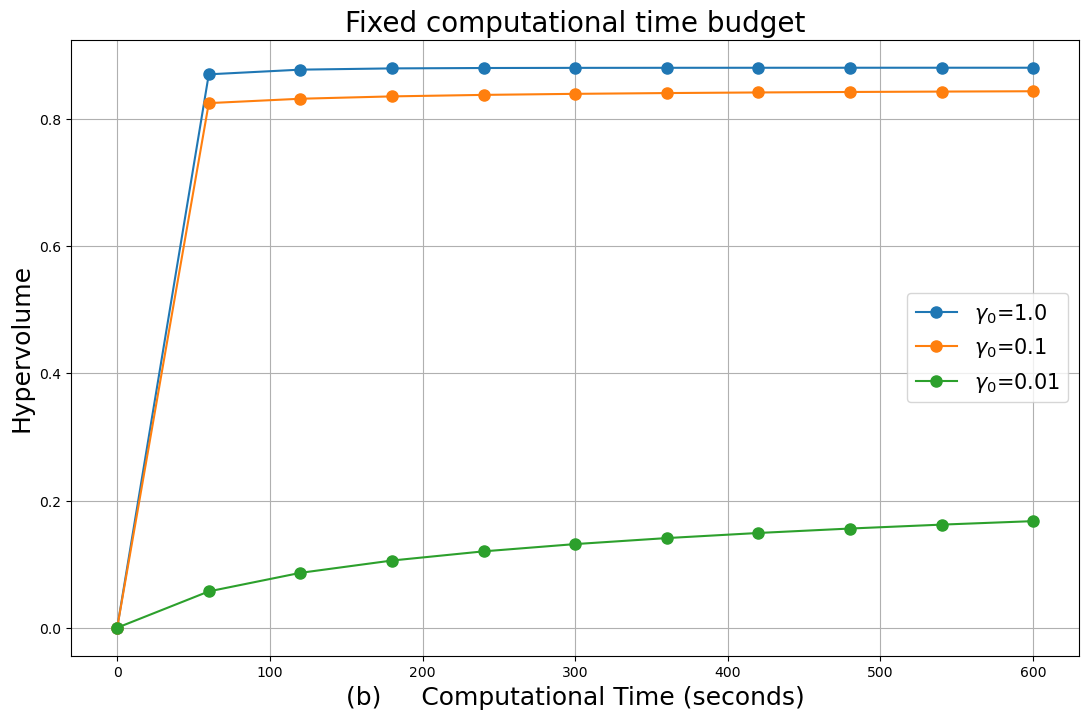}
  \end{subfigure} 
  \caption{Hypervolume convergence for \texttt{Out}$=15$ and \texttt{In}$=15$ under different
initial stepsizes $\gamma_0$, using~(a) fixed iteration $N_{\text{iters}}=100$ and~(b) fixed time budgets $T=600$.
}
  \label{fig:mop_opf_diff_stepsize}
\end{figure*}

As shown in Figure~\ref{fig:mop_opf_diff_stepsize}, the choice of the initial step size $\gamma_0$ is another important factor. It not only determines the convergence rate but also the quality of the final solution. Results based on iterations (Figure~\ref{fig:mop_opf_diff_stepsize}a) and time (Figure~\ref{fig:mop_opf_diff_stepsize}b) are consistent, showing that an aggressive step size of $\gamma_0 =1.0$ yields superior performance. This setting allows the algorithm to converge rapidly. It achieves a hypervolume of approximately $0.85$ after just $20$ iterations, equivalent to roughly $100$ seconds of computational time. By contrast, a moderately smaller step size of $\gamma_0=0.1$ converges to a suboptimal plateau with a hypervolume of approximately $0.81$. Furthermore, a conservative step size of $\gamma_0=0.01$ leads to slow progress. Its hypervolume reaches only about $0.2$ in the end, which is a classic case to show that a too small learning rate leads to computational inefficiency. 

\subsection{Portfolio maximization problems}

\subsubsection{Datasets}

We consider two datasets for the retail portfolio optimization problems: the \emph{Logit
dataset} and \emph{Cohen’s dataset}. Both datasets consist of historical price–sales
observations and are commonly used in demand modeling and pricing studies. The Logit dataset from is a synthetic dataset generated using the
classical logit demand model defined in~(\ref{eq:demand_function}). It contains data for
$50$ products observed over a horizon of $50$ weeks, resulting in a total of $2{,}500$
price–sales entries. This dataset has been used as a benchmark for testing portfolio optimization and pricing algorithms
(e.g.,~\cite{kannan2023solving,logit23}).
Cohen’s dataset is an anonymized real-world dataset obtained from a US-based retailer,
originally introduced in~\cite{cohen2022demand}. It comprises $44$ products (SKUs) with
historical weekly prices and sales spanning the period from $2016$ to $2018$, covering
approximately $98$ weeks. This dataset has been extensively used in the literature for
demand estimation and pricing research (see~\cite{cohen2022data, deng2022unified,
kannan2023solving}). Together, these two datasets enable a systematic evaluation of the proposed MSLO method.
The Logit dataset provides a controlled setting with an interpretable demand structure,
while Cohen’s dataset captures the variability and noise inherent in real retail data.

\subsubsection{Results}
Figure~\ref{fig:logit_diff_out_in}(a) shows that performance is driven mainly by the frequency of outer updates. Increasing the number of outer iterations leads to significant revenue improvements, whereas using very few outer updates results in poor performance regardless of inner accuracy. Moreover, running too many inner iterations can degrade the final solution. For a fixed \texttt{Out}, the more inexact settings \texttt{In}$=1$ and \texttt{In}$=7$ consistently outperform the heavier \texttt{In}$=15$ configuration. This indicates that over-solving the inner problem can slow global progress and may introduce overfitting effects.


Furthermore, Figure~\ref{fig:logit_diff_out_in}(b) more clearly demonstrates the practical advantages of inexact learning by taking into account computational cost. In this time-based comparison, the \texttt{Out}$=15$, \texttt{In}$=1$ configuration clearly performs best, converging faster than all other settings and achieving the highest total revenue of approximately $51$ within $5$ seconds. By contrast, the \texttt{Out}$=15$, \texttt{In}$=15$ configuration is the slowest in its \texttt{Out}$=15$ group, requiring the entire $30$ seconds budget just to approach the performance that the inexact setting achieved in a fraction of the time.
This indicates that the additional inner-loop computations incur a substantial computational cost that is not offset by their relatively small per-iteration performance gains. Consequently, these results favor an inexact strategy for the logit problem: minimizing inner-loop work to \texttt{In}$=1$ allows more frequent outer-loop updates, which leads to faster convergence and higher overall revenue under a fixed time budget.

\begin{figure*}[ht] 
  \centering
  \begin{subfigure}{.45\textwidth}
    \centering
    \includegraphics[width=\linewidth]{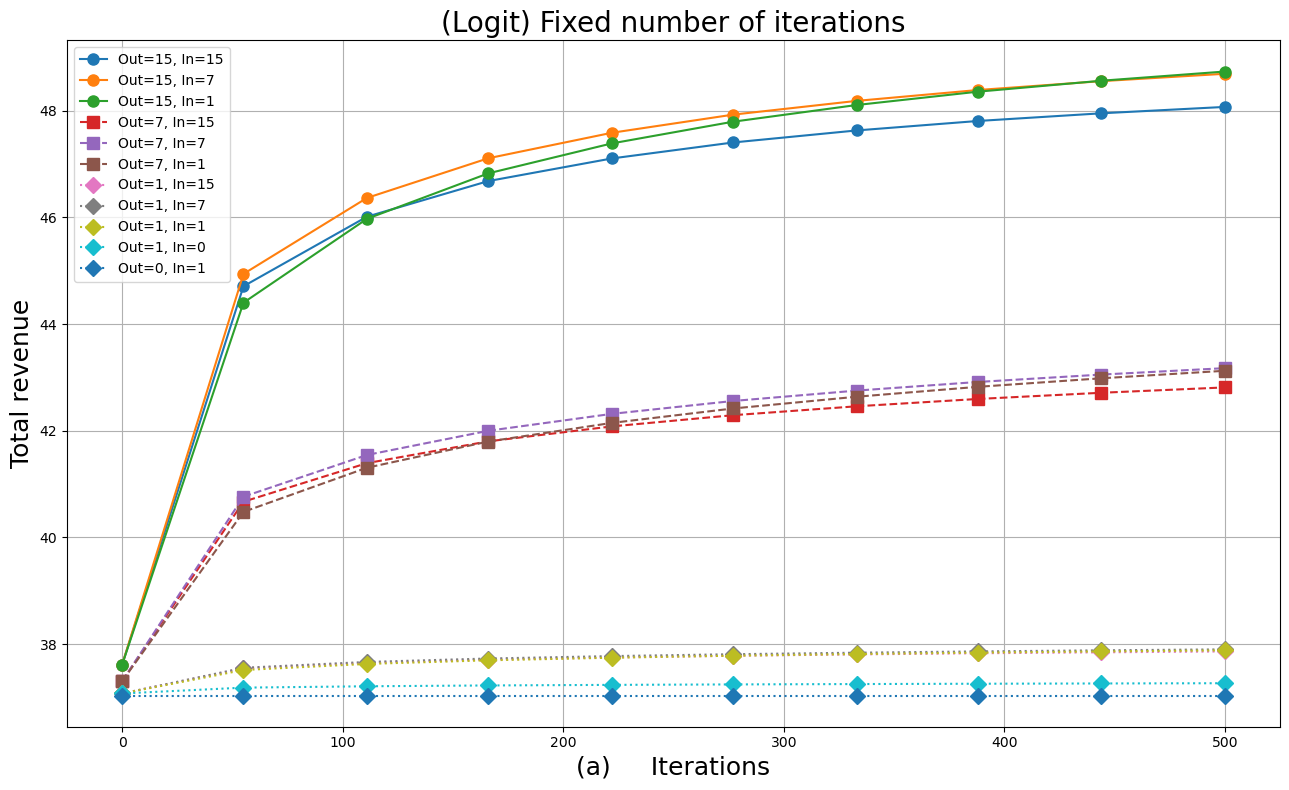}
  \end{subfigure}%
\begin{subfigure}{.45\textwidth}
    \centering
    \includegraphics[width=\linewidth]{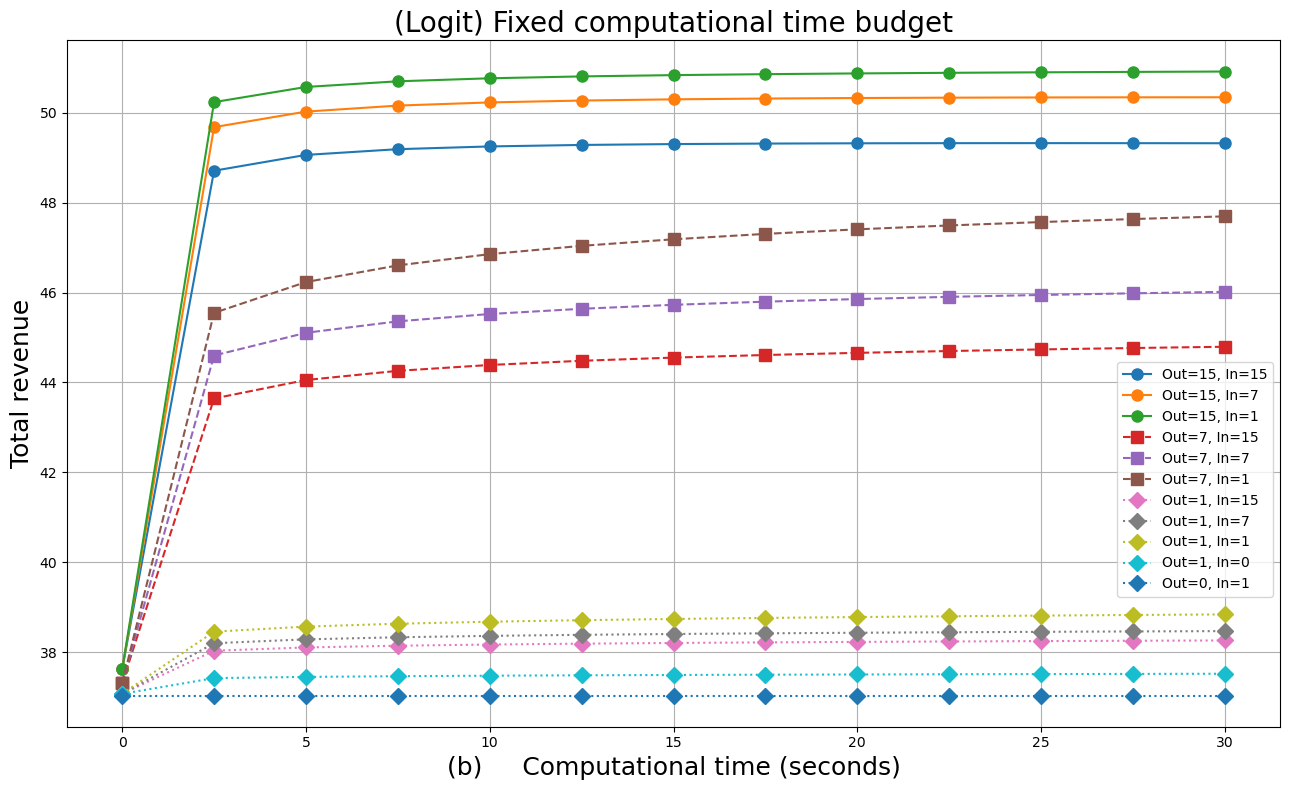}
  \end{subfigure} 
  \caption{Total revenue for different combinations of outer loop (\texttt{Out}) and inner loop (\texttt{In}) counts for the Logit dataset. (a) Performance under a fixed number of iterations $N_{\text{iters}}=500$. 
  (b) Performance under a fixed computational time budget $T=30$. 
  }
  \label{fig:logit_diff_out_in}
\end{figure*}

\begin{figure*}[ht] 
  \centering
  \begin{subfigure}{.45\textwidth}
    \centering
    \includegraphics[width=\linewidth]{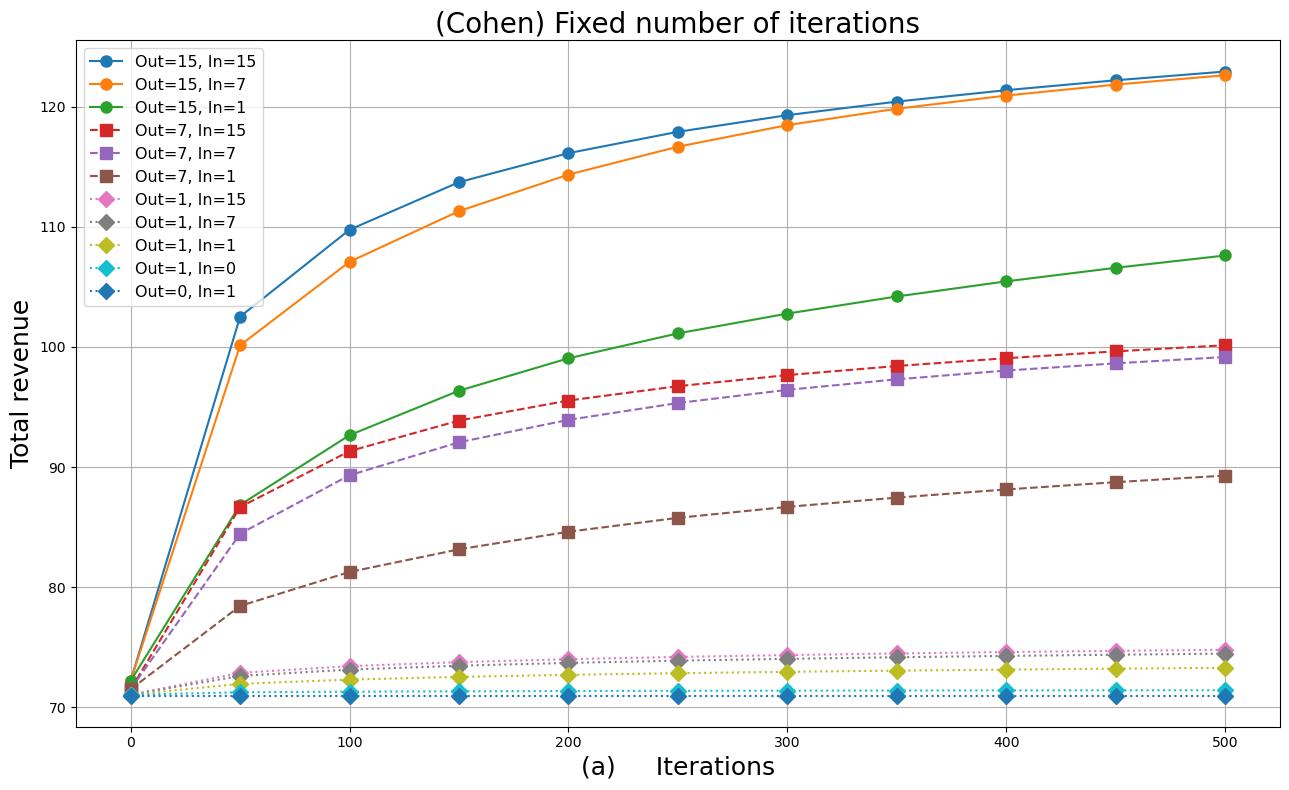}
  \end{subfigure}%
\begin{subfigure}{.45\textwidth}
    \centering
    \includegraphics[width=\linewidth]{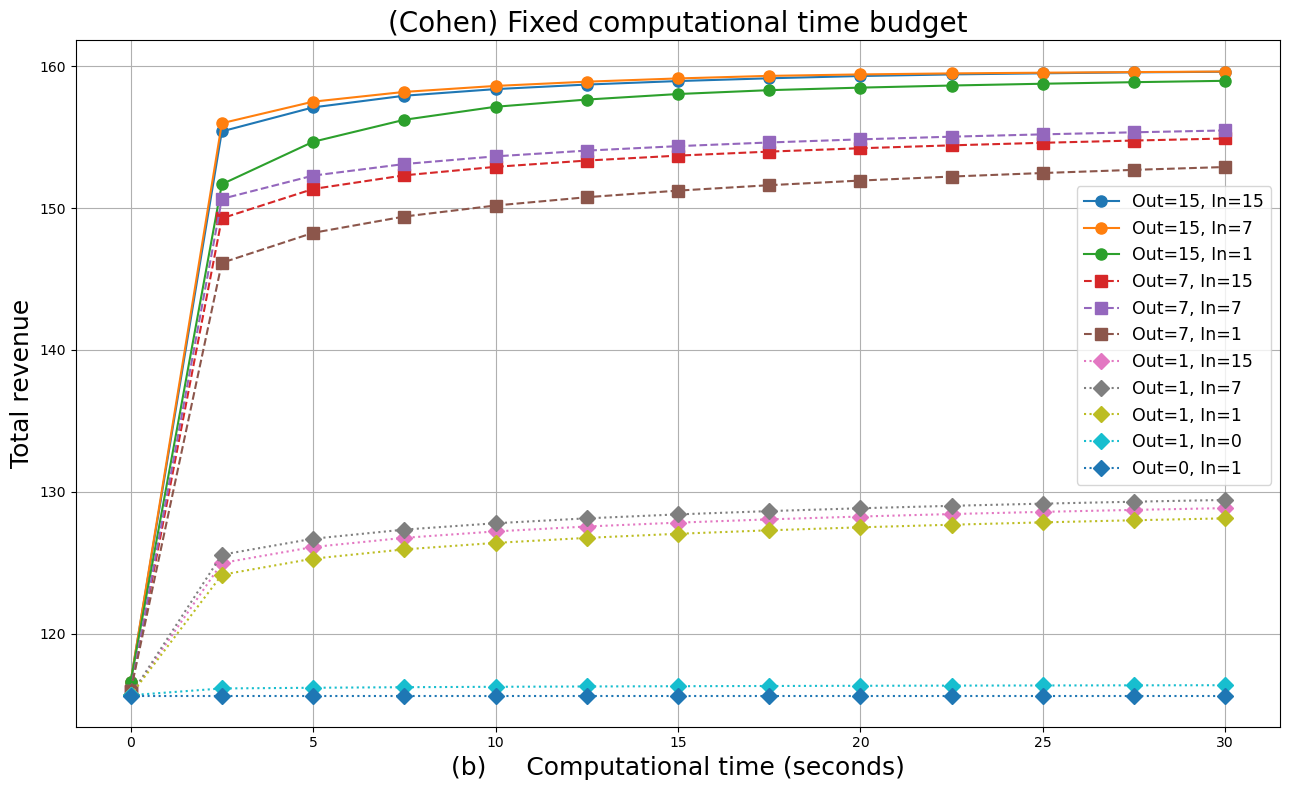}
  \end{subfigure} 
  \caption{Total revenue for different combinations of outer loop (\texttt{Out}) and inner loop (\texttt{In}) counts for the Cohen dataset. (a) Performance under a fixed number of iterations $N_{\text{iters}}=500$.
  (b) Performance under a fixed computational time budget $T=30$. 
  }
  \label{fig:cohen_diff_out_in}
\end{figure*}

The results for the Cohen dataset (Figure~\ref{fig:cohen_diff_out_in}) indicate that, in contrast to the logit case, a more exact inner-loop solution is the more efficient strategy. The iteration-based analysis in Figure~\ref{fig:cohen_diff_out_in}(a) shows a clear and consistent benefit from increasing the number of inner-loop iterations. For any fixed outer-loop count, higher inner-loop counts lead to greater revenue per iteration, indicating that additional inner-loop work produces higher-quality outer updates. The time-based analysis in Figure~\ref{fig:cohen_diff_out_in}(b) further confirms that this added inner-loop effort is computationally worthwhile. The \texttt{Out}$=15$, \texttt{In}$=15$ and \texttt{Out}$=15$, \texttt{In}$=7$ configurations achieve the highest revenue and converge the fastest within the available time budget. In contrast, the inexact \texttt{Out}$=15$, \texttt{In}$=1$ configuration, which appeared less competitive in the iteration-based analysis, is also suboptimal in wall-clock time, converging at a slightly slower rate and to a lower revenue level. This behavior suggests that, for the Cohen problem, the inner loop is computationally efficient: the performance gains obtained from a more accurate inner-loop solution outweigh its additional time cost. 

However, increasing the number of inner iterations beyond a moderate level does not continue to improve performance. In particular, using \texttt{In}$=7$ always slightly outperforms using \texttt{In}$=15$. This suggests that, under a fixed computational budget, investing excessive effort in solving the inner problem very accurately can be counterproductive, as the resulting solution is used only briefly before the outer variables change again. Overall, these results highlight that the optimal tradeoff between inner- and outer-loop computation is problem-dependent.

Figures~\ref{fig:logit_stepsize} and~\ref{fig:cohen_stepsize} examine the effect of the initial step size $\gamma_0$ on revenue for the logit and Cohen datasets under both fixed iteration and fixed computational time budgets. Across both datasets, the choice of $\gamma_0$ plays a critical role in determining convergence speed as well as the final achieved performance. Large step sizes ($\gamma_0 \geq 1.0$) consistently enable rapid progress toward high-revenue solutions. For the logit dataset, a step size of $\gamma_0 = 1.0$ is sufficient to reach the optimal revenue within a small number of iterations. A moderate step size of $\gamma_0 = 0.1$ produces steady improvement, but the rate of progress is too slow to approach the optimum within either the iteration limit or the available computational time. With a smaller step size of $\gamma_0 = 0.01$, the algorithm effectively stagnates, yielding only a marginal improvement over the baseline solution.

This sensitivity to the step size is even more pronounced for the Cohen dataset. When $\gamma_0 = 10.0$, the algorithm reaches the optimal revenue almost immediately. A smaller but still aggressive choice of $\gamma_0 = 1.0$ converges reliably, but requires nearly the entire computational budget to achieve comparable performance. In contrast, smaller step sizes again fail to produce meaningful gains within the allowed time. A consistent pattern therefore emerges across both datasets: when iteration budgets are limited, conservative step sizes restrict exploration and prevent the method from reaching high-revenue regions. When time is limited, additional runtime cannot compensate for an overly cautious initialization. These results underscore the importance of sufficiently large initial step sizes for efficient convergence, and suggest that larger or more complex problem instances, such as the Cohen dataset, may require more aggressive step-size choices.

\begin{figure*}[ht] 
  \centering
  \begin{subfigure}{.45\textwidth}
    \centering
    \includegraphics[width=\linewidth]{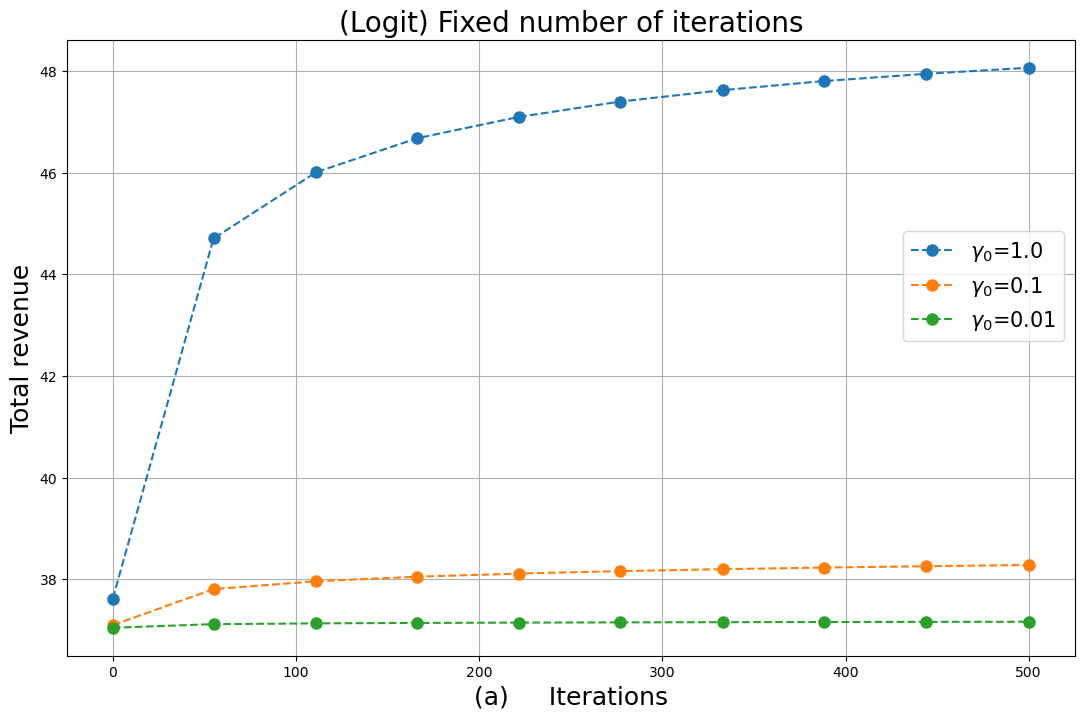}
  \end{subfigure}%
\begin{subfigure}{.45\textwidth}
    \centering
    \includegraphics[width=\linewidth]{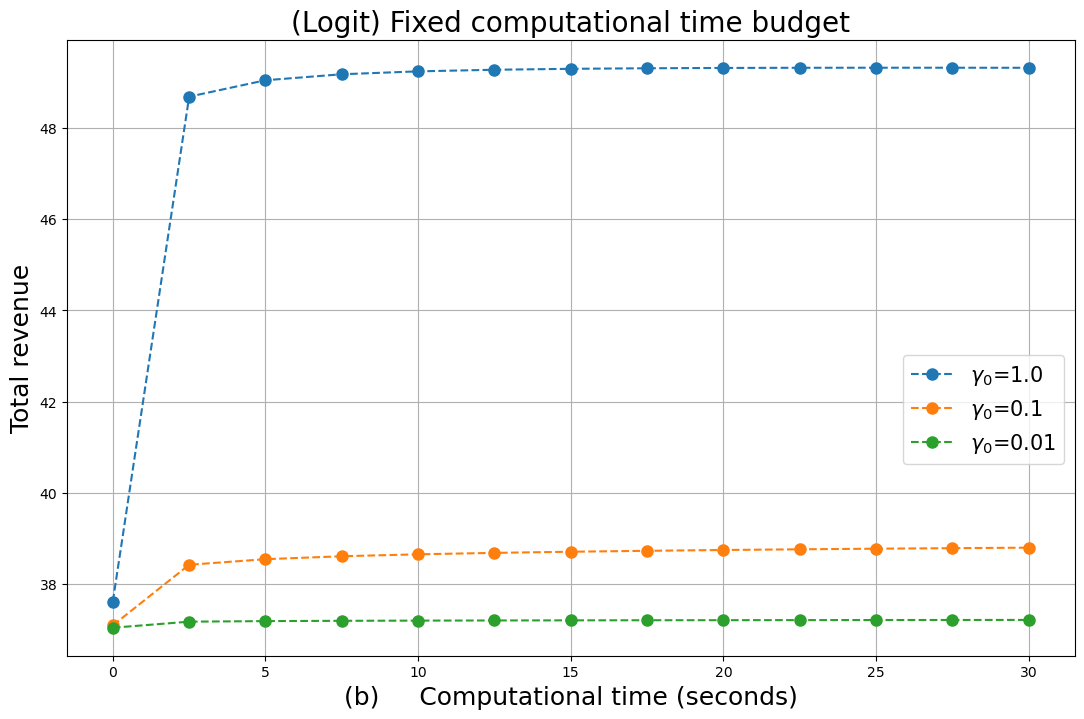}
  \end{subfigure} 
  \caption{Total revenue with a fixed configuration of \texttt{Out}$=15$ and \texttt{In}$=15$ under different initial stepsizes for the Logit dataset, under~(a) fixed iterations $N_{\text{iters}}=500$ and~(b) computational time budgets $T=30$.}
  \label{fig:logit_stepsize}
\end{figure*}

\begin{figure*}[ht] 
  \centering
  \begin{subfigure}{.45\textwidth}
    \centering
    \includegraphics[width=\linewidth]{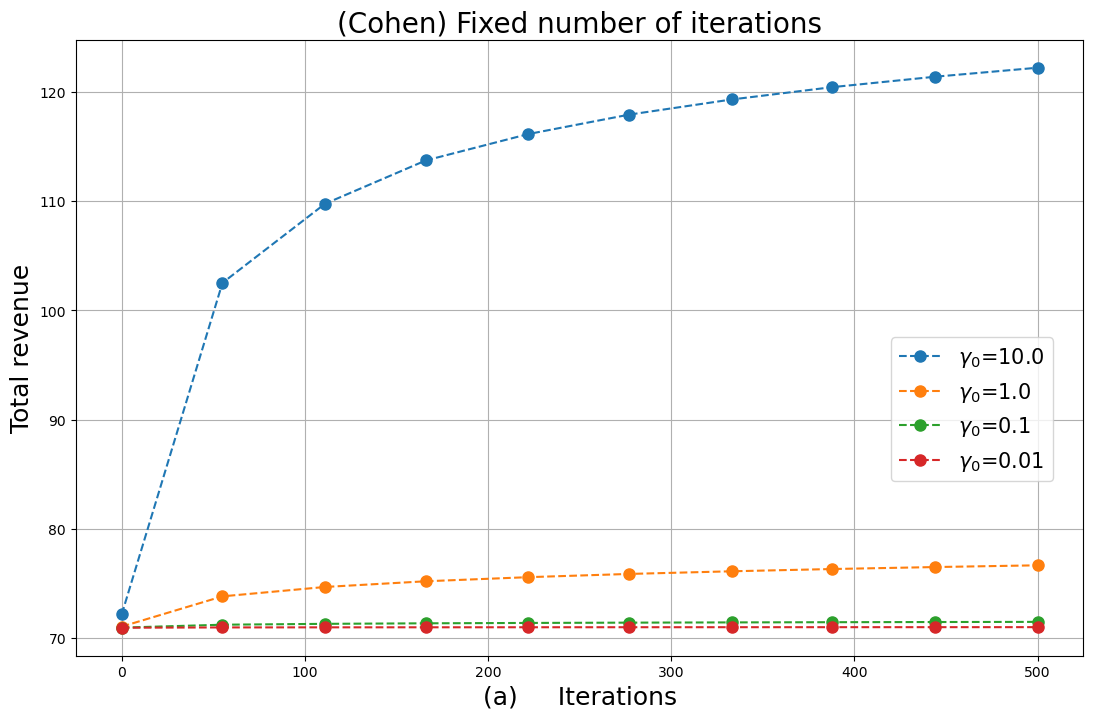}
  \end{subfigure}%
\begin{subfigure}{.45\textwidth}
    \centering
    \includegraphics[width=\linewidth]{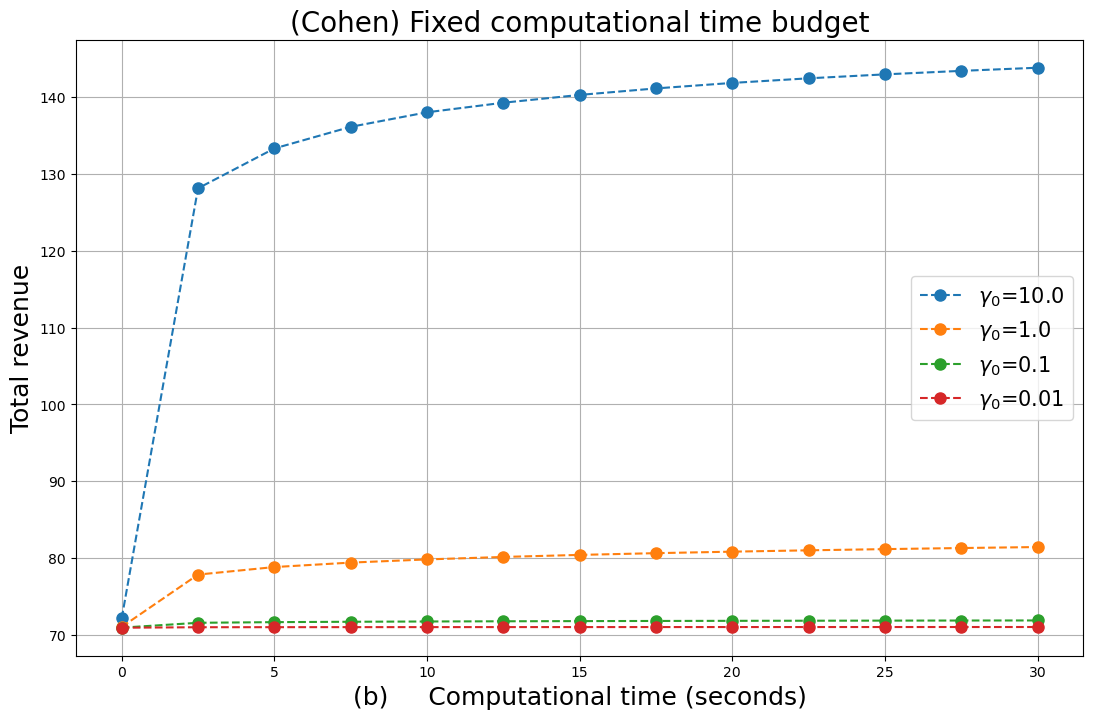}
  \end{subfigure} 
  \caption{Total revenue with a fixed configuration of \texttt{Out}$=15$ and \texttt{In}$=15$ under different initial stepsizes for the Cohen dataset, under~(a) fixed iterations $N_{\text{iters}}=500$ and~(b) computational time budgets $T=30$.}
  \label{fig:cohen_stepsize}
\end{figure*}
\section{Conclusion} \label{sec:conclusion}
This paper studies a class of two-level optimization problems in which an outer-level optimization problem and an inner-level learning task are tightly intertwined. The outer and inner problems are not separable, but instead interact through iterative updates, making the overall problem structure inherently joint. Such formulations naturally arise in applications such as energy management and retail optimization. 

Our primary contribution is the proposal of a learning--optimization framework in which the outer-level decision variables and the inner-level model parameters are updated jointly in an iterative manner. Within this framework, we introduce both single-step and multi-step learning--optimization schemes, which differ in the amount of inner-level work performed between successive outer-level updates. These schemes provide a flexible mechanism to balance solution quality and computational cost. We establish theoretical convergence guarantees for the proposed framework under mild and interpretable conditions that are satisfied by the problem class studied in this work, in which the inner-level learning objective is convex and the outer-level optimization objective is pseudoconvex. In addition, we allow for controlled inexactness in the inner-level solutions and characterize how the resulting errors propagate through the outer-level updates. The analysis also accommodates stochastic approximation–type algorithms, where updates are subject to stochastic noise satisfying standard boundedness and summability conditions (in both the outer and inner levels). Under these conditions, we show that the proposed algorithms converge to a stationary solution of the joint problem. The practical performance of the proposed methods is evaluated through numerical experiments on real-world datasets, covering both single-objective and multi-objective settings. The experiments reveal two consistent empirical insights. First, solution quality is primarily driven by the number of outer-level updates rather than by the accuracy of the inner-level solves. In particular, for the multi-objective OPF problem, performing more than one inner iteration provides negligible benefit, and time-based results confirm that minimizing inner-loop effort leads to faster convergence by enabling more frequent outer-loop updates. Second, the choice of the initial step size plays a critical role in convergence behavior. Conservative step sizes consistently lead to stagnation at suboptimal solutions, whereas sufficiently aggressive step sizes are essential for achieving rapid convergence and high-quality solutions.

Several directions for future research emerge from this work. From a theoretical standpoint, it would be valuable to investigate whether the convexity requirement on the inner-level problem can be relaxed to pseudoconvexity, thereby broadening the applicability of the framework. Another promising direction is to analyze robustness under misspecified or uncertain feasible sets in the outer-level problem, extending beyond objective uncertainty. This setting naturally connects to pseudomonotone variational inequalities in areas such as games and structural design, where such joint frameworks are highly relevant. On the algorithmic side, adaptive and dynamic loop-scheduling strategies merit further exploration, such as allocating more inner-loop effort in early iterations or terminating inner solves based on problem-dependent criteria. Finally, a formal convergence-rate analysis would provide deeper insight into the efficiency of the proposed methods and offer principled guidance for parameter selection.

\section*{Declarations}

\subsection*{Ethics approval and consent to participate}
Not applicable. The study does not involve human participants, animals, or any data requiring ethical approval.

\subsection*{Consent for publication}
Not applicable. No individual-level personal data or identifiable information is included in this manuscript.

\subsection*{Availability of data and material}
The datasets used and/or analysed during the current study are publicly available (as cited within the manuscript) or are available from the corresponding author upon reasonable request.

\subsection*{Competing interests}
The authors declare that they have no competing interests.

\subsection*{Funding}
This research was funded partly by the Deutsche Forschungsgemeinschaft (DFG, German Research Foundation) under Germany's Excellence Strategy --- The Berlin Mathematics Research Center MATH+ (EXC-2046/1, project ID: 390685689). No additional external funding was received.

\subsection*{Authors' contributions}
Zijun Li carried out the computational work, conceived the algorithmic framework, implemented the algorithmic framework from scratch, improved the algorithm, and contributed to writing the manuscript. 
Aswin Kannan conceived the study, conceived the algorithmic framework, supervised the research, developed the methodological direction, and contributed to writing and revising the manuscript. Both authors read and approved the final manuscript.

\subsection*{Acknowledgements}
The authors thank Humboldt Universität zu Berlin and the International Institute of Information Technology Bangalore for their support during the development of this work. 

\bibliographystyle{plain}
\bibliography{bibliography,sample}
\end{document}